\newtheorem{theorem}{Theorem}
\newtheorem{lemma}[theorem]{Lemma}
\newtheorem{proposition}[theorem]{Proposition}
\newtheorem{corollary}[theorem]{Corollary}
\newtheorem{conjecture}[theorem]{Conjecture}
\let\widebar\overline
\begin{document}
\begin{frontmatter}

\title{Levy multiplicative chaos and star scale invariant random measures}
\runtitle{L\'evy multiplicative chaos}

\begin{aug}
\author[A]{\fnms{R\'emi} \snm{Rhodes}\corref{}\thanksref{t3}\ead[label=e1]{rhodes@ceremade.dauphine.fr}},
\author[B]{\fnms{Julien} \snm{Sohier}\ead[label=e2]{j.sohier@tue.nl}}
\and
\author[C]{\fnms{Vincent} \snm{Vargas}\thanksref{t3}\ead[label=e3]{vincent.vargas@ens.fr}}
\runauthor{R. Rhodes, J. Sohier and V. Vargas}
\affiliation{Universit{\'e} Paris-Dauphine, Technische Universiteit Eindhoven  and\\ Ecole Normale sup\'erieure de Paris}
\address[A]{R. Rhodes\\
Ceremade, UMR 7534\\
Universit{\'e} Paris-Dauphine\\
Place du marechal de Lattre de Tassigny\\
75775 Paris Cedex 16\\
France\\
\printead{e1}} %adresu isvedimo komanda gale!
\address[B]{J. Sohier\\
Department of Mathematics\\
\quad and Computer Science\\
Technische Universiteit Eindhoven\\
P.O. Box 513\\
5600 MB Eindhoven\\
The Netherlands\\
\printead{e2}}
\address[C]{V. Vargas\\
Departement de Math\'ematiques et Applications\\
CNRS UMR 8553\\
Ecole Normale sup\'erieure de Paris\\
45 rue d'Ulm\\
F-75 230 Paris Cedex 05\\
France\\
\printead{e3}}
\end{aug}
\thankstext{t3}{Supported in part by the CHAMU ANR project (ANR-11-JCJC).}

% HISTORY:
\received{\smonth{1} \syear{2012}}
\revised{\smonth{10} \syear{2012}}

% ABSTRACT
%
\begin{abstract}
In this article, we consider the continuous analog of the celebrated
Mandelbrot star equation with infinitely divisible weights. Mandelbrot
introduced this equation to characterize the law of multiplicative
cascades. We show existence and uniqueness of measures satisfying the
aforementioned continuous equation. We obtain an explicit
characterization of the structure of these measures, which reflects the
constraints imposed by the continuous setting. In particular, we show
that the continuous equation enjoys some specific properties that do
not appear in the discrete star equation. To that purpose, we define
a~L\'evy multiplicative chaos that generalizes the already existing
constructions.
\end{abstract}

% KEYWORDS
% Pirmas kwd is didziosios raides
%
\begin{keyword}[class=AMS]
\kwd[Primary ]{60G57}
\kwd[; secondary ]{28A80}
\kwd{60H25}
\kwd{60G15}
\kwd{60G18}
\end{keyword}
\begin{keyword}
\kwd{Random measure}
\kwd{star equation}
\kwd{scale invariance}
\kwd{multiplicative chaos}
\kwd{uniqueness}
\kwd{infinitely divisible processes}
\kwd{multifractal processes}
\end{keyword}
\pdfkeywords{60G57, 28A80, 60H25, 60G15, 60G18, Random measure, star equation, scale invariance,
multiplicative chaos, uniqueness, infinitely divisible processes, multifractal processes}

\end{frontmatter}

%%%%%%%%%%%%%%%%%%%%%%%%%%%%%%%%%%%%%%%%%%%%%%%%%%%%%%%%%%%%%%%%%%%%%%%%%%%%%%%%%
%%%%%%%%%%%%%%%%%%%%% Here the document begins
%%%%%%%%%%%%%%%%%%%%%%%%%%%%%%%%
%%%%%%%%%%%%%%%%%%%%%%%%%%%%%%%%%%%%%%%%%%%%%%%%%%%%%%%%%%%%%%%%%%%%%%%%%%%%%%%%%
%s1 ###
%s1 #&#
\section{Introduction}\label{intro}
%%%%%%%%%%%%%%%%%%%%%%%%%%%%%%%%%%%%%%%%%%%%%%%%%%%%%%%%%%%%%%%%%%%%%%%%%%%%%%%%
Log-normal multiplicative martingales were introduced by Mandelbrot
\cite{mandelfan} in order to build random measures describing energy
dissipation and contribute explaining intermittency effects in
Kolmogorov's theory of fully developed turbulence; see
\cite{cfCastaing,cfSch,cfSto,cfCas,cfFr}. Two years later, Mandelbrot
\cite{mandelbrot} introduced the so-called random multiplicative
cascades as a more easily understandable and more mathematically
tractable alternative. Indeed, this last model was somewhat familiar to
the community of turbulence as some authors \cite{novikov,yaglom} had
already considered, with various degrees of rigor, such cascades in the
restricted framework of the so called conservative case.

Random multiplicative cascades exhibit nonlinear power-law scalings,
rendering intermittency effects in turbulence. Random multiplicative
cascades are \mbox{therefore} the first mathematical discrete approach of
multifractality. Roughly speaking, a (dyadic) multiplicative cascade is
a positive random measure $M$ on the unit interval $[0,1]$ that obeys
the following decomposition rule:
%e1 ###
%e1 #&#
\begin{equation}
\label{starcasc} M(dt)\stackrel{\mathrm{law}}{=}Z^0\mathbf{1}_{[0,1/2]}(t)M^0(2dt)+Z^1
\mathbf{1}_{[1/2,1]}(t)M^1(2dt-1),
\end{equation}
where $M^0,M^1$ are two independent copies of $M$, and $(Z^0,Z^1)$ is a
random vector with prescribed law and positive components of mean $1$
independent from $M^0,M^1$. Such an equation (and its generalizations
to $b$-adic trees for $b\geq2$), the celebrated star equation
introduced by Mandelbrot in \cite{mandelbrotstar}, uniquely determines
the law of the multiplicative cascade. Since the seminal work of
Mandelbrot, the star equation (\ref{starcasc}) has been intensively
studied: of particular interest are the founding paper by Kahane and
Peyriere \cite{cfKahPey} and the work by Durrett and Ligget
\cite{durrett}. The following literature on the topic essentially
builds on these two works. Let us also mention the article
\cite{Comets} which shows that the free energy of a directed polymer
model can be obtained as the limit of the free energy of multiplicative
cascade models, thus establishing a link between the two models.

Despite the fact that multiplicative cascades have been widely used as
reference models in many applications, they possess many drawbacks
related to their discrete scale invariance; mainly they involve a
particular scale ratio, and they do not possess stationary fluctuations
(this comes from the fact that they are constructed on a dyadic tree
structure).

Much effort has been made to develop a continuous parameter theory of
suitable stationary multifractal random measures ever since, stemming
from the theory of multiplicative chaos introduced by Kahane
\cite{cfKah,Bar,cfSch,bacry,cfRoVa,rhovar}. Nevertheless, in comparison
with the discrete case, the state of the art concerning continuous time
models sounds rather empty: laying the foundations like defining a
proper continuous star equation is very recent and its solving only
concerns the lognormal situation \cite{allez}. The main reasons are
technical: first, Gaussian processes are very well understood and,
second, the analysis of Gaussian multiplicative chaos is much
simplified by the use of convexity inequalities for lognormal weights
introduced by Kahane; see Kahane's original paper \cite{cfKah} or
\cite{allez}, Lemma~10, for instance.

In this paper, we are concerned with solving the continuous star
equation:

%
%$\1 #&#
\begin{stareq*}
A stationary random measure $M$ on $\mathbb{R}^d$ is said to be $\star$-scale
invariant if for all $0<\epsilon\leq1$, $M$ obeys the cascading rule
%e2 ###
%e2 #&#
\begin{equation}
\label{star} \bigl(M(A) \bigr)_{A\in\mathcal{B}(\mathbb{R}^d)}\stackrel
{\mathrm{law}} {=} \biggl(\int
_Ae^{\omega_{\epsilon}(r)}M^{\epsilon}(dr)
\biggr)_{A\in\mathcal{B}(\mathbb{R}^d)},
\end{equation}
%
%,\qquad\big(M(A)\big)_{A\in\mathcal{B}(\R^d)}=\big(\int_Ae^{\omega_{
where $\omega_{\epsilon}$ is a stochastically continuous stationary
process and $M^{\epsilon}$ is a random measure independent from
$\omega_{\epsilon}$ satisfying the relation
\[
\bigl(M^{\epsilon}(\epsilon A) \bigr)_{A\in\mathcal{B}(\mathbb
{R}^d)}\stackrel{\mathrm{law}}{=}
\epsilon^d \bigl(M(A) \bigr)_{A\in\mathcal
{B}(\mathbb{R}^d)}.
\]
\end{stareq*}
Intuitively, this relation means that when you zoom in the measure $M$,
you should
observe the same behavior up to an independent factor. Notice that this
definition is stated in great generality since no constraint on the law
of $\omega_\epsilon$ is imposed. In the context of discrete
multiplicative cascades, given any law for $\omega_\epsilon$ (up to
some integrability conditions), this equation can be solved. However,
the continuous case imposes the following constraint on $\omega
_{\epsilon}$:

%le1 #&#
\begin{lemma}\label{lemme_id}
We consider a nontrivial $\star$-scale invariant measure $M$ on
$\mathbb{R}^d$. We suppose that for some $x$ (and hence all $x$) the
family $\epsilon\rightarrow\omega_{\epsilon}(x)$ is continuous in
distribution and
\[
\mathbb E \bigl[ \bigl( M[0,1]^d\bigr)^{\gamma}\bigr] <
\infty,\qquad\mathbb E \bigl[e^{(1+\gamma)\omega_{\epsilon}(x)}\bigr]<
\infty\qquad\forall\epsilon\leq1
\]
for some $\gamma>0$. Then, for all $\epsilon$, the process $\omega
_{\epsilon}$ is infinitely divisible.
\end{lemma}
Hence, with minimal assumptions on $\omega_{\epsilon}$ and the solution
$M$, the process $\omega_{\epsilon}$ is infinitely divisible. In view
of the above lemma, we can suppose that the process $\omega_{\epsilon}$
is infinitely divisible: we will make this assumption in the sequel. As
suggested by the Gaussian case \cite{allez}, this naturally leads to
the issue of constructing random measures formally defined by
\[
M(dx)=e^{L_x} \,dx,
\]
where the process $L$ is infinitely divisible with logarithmic
correlations. We carry out this construction in Section~\ref{genchaos},
which generalizes already existing such attempts
\cite{bacry,Bar,fan,rhovar}. We call such measures L\'evy
multiplicative chaos. This construction enables us not only to give
nontrivial solutions to (\ref{star}) (in Section~\ref{secstar}) but
also to characterize all the solutions to (\ref{star}) (up to a few
additional technical assumptions). These solutions share the property
of a specific structure for the law of the process~$\omega_{\epsilon}$.
This structure reflects the fact that the continuous star equation is
far more restrictive than the discrete one (similarly, L\'evy processes
are in some sense more restrictive than discrete simple random walks
which can be considered with any law for the increments).

%s1.1 ###
%s1.1 #&#
\subsection{Notation}
%%%%%%%%%%%%%%%%%%%%%%%%%%%%%%%%%%%
We will use the following notation throughout the paper.
$\mathcal{B}(E)$ stands for the Borel $\sigma$-field of a topological
space $E$. A random measure $M$ is a random variable taking values into
the set of positive Radon measures defined on $\mathcal
{B}(\mathbb{R}^d)$. We will say that $M$ possesses a moment of order
$p>0$ if $ \mathbb E [M(K)^p]<+\infty$ for every compact set $K$. A
random measure $M$ is said to be stationary if for all
$y\in\mathbb{R}^d$ the random measures $M(\cdot)$ and $M(y+\cdot) $
have the same law. A stochastic process $(X_t)_{t\in\mathbb{R}^d}$ is
said to be stochastically continuous if, for each $t\in\mathbb{R}^d$,
$X_{t+h}$ converges toward $X_t$ in probability when $h$ goes to $0$.
We will also use the shortcut ID in place of infinitely divisible. We
remind the reader that every stochastically continuous random process
admits a measurable version; see \cite{borkar}, Chapter~6. We will only
deal with measurable versions of stochastically continuous process in
this paper.

%
% \begin{lemma}
% Let $(F(x))_{x \in\R^d}$ and $(G(x))_{x \in\R^d}$ be two stationary
%and stochastically continuous processes. We consider a Random measure $
%positive mass to all open sets. We suppose that there exists $
% and $E[ \eta(K)^{1+\gamma}] < \infty$ for all compact set $K$. If the
%following equality on measures holds:
% \[
% F(x) \eta(dx) \stackrel{law}{=} G(x) \eta(dx)
% \]
% then the two processes $F$ and $G$ have same law.
% \end{lemma}

%%%%%%%%%%%%%%%%%%%%%%%%%%%%%%%%%%%%%%%%%%%%%%%%%%%%%%%%%%%%%%%%%%%%%%%%%%%%%%%%%%%%%%%%%%%%%%%%%%%%%%%%%%%%%%%%%%%%%%%%%%%%%%%%%
%s2 ###
%s2 #&#
\section{Generalized L\'evy chaos}\label{genchaos}
%%%%%%%%%%%%%%%%%%%%%%%%%%%%%%%%%%%%%%%%%%%%%%%%%%%%%%%%%%%%%%%%%%%%%%%%%%%%%%%%%%%%%%%%%%%%%%%%%%%%%%%%%%%%%%%%%%%%%%%%%%%%%%%%%
This section is devoted to the construction of measures that can
formally be written as
\[
M(dx)=e^{L_x} \,dx,
\]
where $L$ is a stationary ID process with a logarithmic spatial
dependency. As in the Gaussian case, such a singularity of the spatial
structure imposes to construct these measures through a limiting
procedure where the singularity has been ``cut off.'' Hence we will
understand these measures as a limit
\[
M(dx)=\lim_{\epsilon\to0}e^{X^\epsilon_x} \,dx,
\]
where $X^\epsilon$ is a stationary ID process that converges in some
sense toward $L$. The process $X^\epsilon$ will basically depend on
two parameters: a generator (any stationary ID process) and a rate
function. We detail below the construction.

%s2.1 ###
%s2.1 #&#
\subsection{Generator and rate function}
%%%%%%%%%%%%%%%
Let $(X_t)_{t\in\mathbb{R}^d}$ be a stochastically continuous
stationary ID random process. It follows from \cite{maru} that $X$
admits a version given by
%e3 ###
%e3 #&#
\begin{eqnarray}\label{version}
X_t &=& b+\int_{\mathbb{R}^d}\cos(t\cdot
\lambda) \widebar{W}(d\lambda)+\int_{\mathbb{R}^d}\sin(t\cdot\lambda)
\widebar{W}'(d\lambda)
\nonumber\\[-8pt]\\[-8pt]
&&{} +\int_Sf \bigl(T_t(s)\bigr) \bigl[\widebar{N}(ds)-\bigl(1\vee\bigl|f
\bigl(T_t(s)\bigr)\bigr|\bigr)^{-1}\theta(ds)\bigr],\nonumber
\end{eqnarray}
where:
\begin{itemize}
\item $b\in\mathbb{R}$;

\item $\widebar{W},\widebar{W}',\widebar{N}$ are independent;

\item $\widebar{W},\widebar{W}'$ are identically distributed
    centered Gaussian random measures on $\mathbb{R}^d$ with
    covariance kernel given by $ \mathbb E
    [\widebar{W}(A)\widebar{W}(B)]=R(A\cap B)$ for some symmetric
    positive finite measure $R$ on
    $(\mathbb{R}^d,\mathcal{B}(\mathbb{R}^d))$;

\item$\widebar{N}$ is a Poisson random measure on a Borel space $S$
    with a $\sigma$-finite intensity measure $\theta$;

\item$f\dvtx S\to\mathbb{R}$ is a measurable deterministic function
    such that
\[
\int_S\bigl(\bigl|f(s)\bigr|^2\wedge1\bigr) \theta(ds)<+
\infty;
\]

\item$(T_x)_x$ is a measure preserving flow on $(S,\theta)$.
\end{itemize}
In what follows, we will say that a stochastically continuous ID
process is associated with
$(S,\widebar{W},\widebar{W}',\widebar{N},\theta,R,f,(T_x)_x)$ if it is
given by (\ref{version}) where all the involved items are defined as
described above.

We define the Laplace exponents $ \psi$ of $X$ for $p\geq1$ by
\[
\mathbb E \bigl[e^{q_1X_{t_1}+\cdots+q_pX_{t_p}}\bigr]=e^{\psi_{t_1,\ldots,t_p}(q_1,\ldots,q_p)}
\]
for all $(t_1,\ldots,t_p)\in(\mathbb{R}^d)^p$ and $q_1,\ldots,q_p\in
\mathbb{R}$ such that the above expectation makes sense. For the sake
of clarity, $\psi_0$ (i.e., the Laplace exponents of $X_0$, or
equivalently of $X_t$ for any $t\in\mathbb{R}^d$) will be denoted by
$\psi$.\vadjust{\goodbreak}

We assume that $X$ possesses a second order exponential moment, and we
consider the following generalized covariance function:
%e4 ###
%e4 #&#
\begin{eqnarray}
\label{defF} F(x)&=& \psi_{0,x}(1,1)-2\psi(1),\qquad x\in
\mathbb{R}^d.
\end{eqnarray}

%as2 #&#
\begin{assumption}\label{vitg}
Let $g$ be a nonnegative function in $L^1_{\mathrm{loc}}(\mathbb{R}_+,dy)$ such that
%e5 ###
%e5 #&#
\begin{eqnarray}
&& \forall x \in\mathbb{R}^d\setminus\{0\}\quad\mbox{and}\quad a\geq1
\nonumber\\[-8pt]\\[-8pt]
&& \qquad\int_a^{+\infty}\frac{|F(g(u)x)|}{u} \,du\leq
\widebar{F}\ln_+\frac{1}{a|x|}+h(a,x),\nonumber
\end{eqnarray}
where $h$ is some bounded continuous function on $\mathbb{R}_+\times
\mathbb{R}^d$ and $\widebar{F}$ is some positive constant. The function
$g$ will be called rate function.
\end{assumption}

%s2.2 ###
%s2.2 #&#
\subsection{Limiting procedure}
%%%%%%%%%%%%%%%%%%%%%%%%%
For any $\epsilon\in\,]0,1[$, we define a new stochastically continuous
ID random process:
%e7 ###
%e6 ###
%e6 #&#
%e7 #&#
\begin{eqnarray}
 X^\epsilon_t&=&b\ln\frac{1}{\epsilon}+\int
_1^{1/\epsilon}\!\!\!\int_{\mathbb{R}^d}\cos
\bigl(tg(y)\cdot\lambda\bigr) W(d\lambda,dy)
\nonumber\\[-8pt]\label{version2} \\[-8pt]
&&{} +\int_1^{1/\epsilon}\!\!\!
\int_{\mathbb{R}^d}\sin\bigl(tg(y)\cdot\lambda\bigr)
W'(d\lambda,dy)\nonumber
\\
&&{}+\int_1^{1/\epsilon}\!\!\!\int_Sf
\bigl(T_{tg(y)}(s)\bigr) \biggl[N(ds,dy)-\bigl(1\vee\bigl|f
\bigl(T_{tg(y)}(s)\bigr)\bigr|\bigr)^{-1}\theta(ds)
\frac{dy}{y}\biggr],\hspace*{-24pt}
\end{eqnarray}
where:
\begin{itemize}
\item $W,W',N$ are independent;

\item $W,W'$ are identically distributed centered Gaussian random measures
on $\mathbb{R}^d\times\mathbb{R}^*_+$ with covariance kernel
$R(d\lambda)\frac{dy}{y}$, that is, $ \mathbb E [W(A)W(B)]=\int_{A\cap
B}R(d\lambda)\frac{dy}{y}$ for any Borel sets $A,B\subset
\mathbb{R}^d\times\mathbb{R}^*_+$;

\item $N$ is a Poisson random measure on the Borel space $S\times\mathbb
{R}^*_+$ with intensity measure $\theta(ds)\otimes\frac{dy}{y}$;

\item $f\dvtx S\to\mathbb{R}$ and $(T_x)_x$ are the same as above.
\end{itemize}

Clearly, $X^\epsilon$ is a stationary ID process. From \cite{maru},
Theorem~5, it is stochastically continuous. In what follows, we will
say that a family $(X^\epsilon)_\epsilon$ of stationary stochastically
continuous ID processes is an approximating family associated with
$(S,W,W',N,\theta,R,f,(T_x)_x)$ if it is given by (\ref{version2})
where all the involved items are defined as described above. Notice
that the whole law of the processes $(X^\epsilon)_{\epsilon\in]0,1]}$
can be recovered from the law of the process $X$ introduced in the
previous subsection and the rate function $g$. For this reason, the ID
process $X$ will be called the \textit{generator} of the approximating
sequence $(X^\epsilon)_\epsilon$ and $g$ the \textit{rate function}.

We have
%e8 ###
%e8 #&#
\begin{equation}
\label{defpsi} \forall q\geq0,\ \forall x\in\mathbb{R}^d\qquad\mathbb E
\bigl[e^{qX^\epsilon_x}\bigr]= \mathbb E \bigl[e^{qX^\epsilon_0}
\bigr]=e^{\ln(1/\epsilon)\psi(q)}.
\end{equation}
We stress that, in great generality, $\psi$ takes values into $\mathbb
{R}_+\cup
\{+\infty\}$, but it is finite at least for $q\in[0,2]$.

For $\epsilon>0$, we define a random measure
%e9 ###
%e9 #&#
\begin{equation}
\label{mart} \forall A\in\mathcal{B}\bigl(\mathbb{R}^d\bigr)\qquad
\widetilde{M}{}^\epsilon(A)=\int_Ae^{X^\epsilon_x-\psi(1)\ln(1/\epsilon)}
\,dx.
\end{equation}
Clearly, for each fixed $A$ with finite Lebesgue measure, the family
$(\widetilde{M}{}^\epsilon(A))_{\epsilon\in]0,1[}$ is a positive
martingale. Thus it converges almost surely. We deduce that the family
$(\widetilde{M}{}^\epsilon)_\epsilon$ almost surely weakly converges
toward a limiting random measure $M$ on $\mathcal{B}(\mathbb{R}^d)$.
This measure will be called L\'evy multiplicative chaos associated with
$(S,W,W',N,\theta,R,f,(T_x)_x)$.

%s2.3 ###
%s2.3 #&#
\subsection{Main properties}
%%%%%%%%%%%%%%%%%%%%
By stationarity and the $0-1$ law, we deduce (as~in
\mbox{\cite{cfKah,rhovar2}}).
%
%pr3 #&#
\begin{proposition}\label{Ndeg}
Either of the following events occurs with probability one:
\[
\{M\equiv0\}\quad\mbox{or}\quad\bigl\{\forall B\mbox{ nonempty ball }M(B)>0
\bigr\}.
\]
In the second situation, we will say that the measure $M$ is
nondegenerate.
\end{proposition}

The nondegeneracy is expectedly related to the Laplace exponents of the
generator:
%
%th4 #&#
\begin{theorem}\label{a_mont}
Under Assumption~\ref{vitg}, the measure $M$ is nondegenerate as soon
as $\psi'(1) -\psi(1)< d$.
\end{theorem}

%co5 #&#
\begin{corollary}
Under Assumption~\ref{vitg} and provided that $\psi'(1) -\psi(1)< d$,
the measure $M$ almost surely does not possess any atom.
\end{corollary}

In some particular situations, it can be proved that the condition
$\psi'(1) - \psi(1)< d$ is optimal; see \cite{cfKah,Bar,bacry}, for
instance. But the situation presented here is far more intricate and it
is not optimal in great generality since we only require the
correlation structure to be sub-logarithmic (Assumption~\ref{vitg}). To
illustrate the situation, let us focus on the second order moment. It
is well known that, in the particular situations presented in
\cite{cfKah,Bar,bacry}, the measure $M$ admits a second order moment if
and only if $\psi(2)<d$. In our case, the situation is not that clear.
For instance, choose $\theta$ equal to the Lebesgue measure on
$S=\mathbb{R}^d$, $\theta$ any L\'evy measure on $\mathbb{R}^d$ and
$R=0$. The flow $(T_t)_t $ is the usual group of translations. Take any
positive bounded function $f$ with compact support over $\mathbb{R}^d$
and $g(y)=y^q$ (for $q\geq1$). Notice that the associated function $F$
reduces to $0$ for all $x$ such that the supports of $f$ and $T_xf$ are
disjoint, say for $|x|\geq R$. Then for $a>0$ and
$x\in\mathbb{R}^d\setminus\{0\} $, we have (where $e_x=x/|x|$)
%e10 #&#
\begin{eqnarray}\label{rem5}
\qquad\quad \int_1^{+\infty}\frac{F(u^qx)}{u} \,du &=&\int
_{|x|^{1/q}}^{+\infty
}\frac{F(u^qe_x)}{u} \,du
\nonumber\\[-8pt]\\[-8pt]
&\simeq&\frac{F(0)}{q}\ln_+\frac{1}{|x|}=\frac{\psi(2)-2\psi
(1)}{q}\ln_+
\frac{1}{|x|} \qquad\mbox{as }x\to0.\nonumber
\end{eqnarray}
Hence it can be proved that $M$ admits a second order moment if and
only if $\frac{\psi(2)-2\psi(1)}{q}<d$, which is quite a different
condition from \cite{cfKah,Bar,bacry}.

Hence, it appears that the condition $\psi'(1) -\psi(1)< d$ should be
optimal when the rate function $g$ is ``not far'' from the function
$g(y)=y$. In that spirit, we claim:
%
%th6 #&#
\begin{theorem}\label{cnsq}
If the measure $M$ admits a moment of order $1+\delta$ for some
$\delta>0$, and if the rate function $g$ satisfies $g(y)\leq y$ for
$y\geq1$, then
\[
\psi(1+\delta)-(1+\delta)\psi(1)\leq d\delta.
\]
In particular $\psi'(1) -\psi(1)< d$.
\end{theorem}

%%%%%%%%%%%%%%%%%%%%%%%%%%%%%%%%%%%%%%%%%%%%%%%%%%%%%%%%
%s2.4 ###
%s2.4 #&#
\subsection{On possible generalizations}
%%%%%%%%%%%%%%%%%%%%%%%%%%%%%%%%%%%%%%%%%%%%%%%%%%%%%%%%
In the spirit of \cite{cfKah}, it is possible to make the
multiplicative chaos act on other measures than the Lebesgue measure.
More precisely, choose a Radon measure $\kappa$ on $\mathbb{R}^d$ and,
for $\epsilon>0$, define the random measure
%e10 ###
%e11 #&#
\begin{equation}
\label{martkappa} \forall A\in\mathcal{B}\bigl(\mathbb{R}^d\bigr)\qquad
\widetilde{M}{}^\epsilon(A)=\int_Ae^{X^\epsilon_x-\psi(1)\ln(1/\epsilon)}
\kappa(dx).
\end{equation}
For each fixed $A$ with finite $\kappa$-measure, the family
$(\widetilde{M}{}^\epsilon(A))_{\epsilon\in]0,1[}$ is a positive
martingale once again and therefore converges almost surely. We deduce
that the family $(\widetilde{M}{}^\epsilon)_\epsilon$ almost surely
weakly converges toward a limiting random measure $M$ on
$\mathcal{B}(\mathbb{R}^d)$. This measure will be called L\'evy
multiplicative chaos associated with $(S,W,W',N,\theta,R,f,(T_x)_x)$
and integrating measure $\kappa$.

A $0-1$ law argument shows:
%
%pr7 #&#
\begin{proposition}\label{Ndeg}
Either of the following events occurs with probability one:
\[
\{M\equiv0\}\quad\mbox{or}\quad\bigl\{\forall B\mbox{ nonempty ball
with }
\kappa(B)>0, M(B)>0\bigr\}.
\]
In the second situation, we will say that the measure $M$ is nondegenerate.
\end{proposition}

In this case, nondegeneracy results in an intricate way from the
structure of the measure $\kappa$ as well as the Laplace exponents of
the generator. It seems difficult to state quite generally a result.
Nevertheless we can focus on the situation when the structure of
$\kappa$ is related to the Euclidean metric in the following way:
%
%de8 #&#
\begin{definition}
We introduce the set $R_{\alpha}$ of Radon measures $\nu$ on $\mathbb
{R}^d$ such
that: for any nonempty ball $B$ of $\mathbb{R}^{d}$, for any
$\varepsilon> 0$, there
exist $\delta> 0, D > 0$, and a~compact set $K_{\varepsilon} \subset
B$ with
$\nu(B \setminus K_{\varepsilon}) < \varepsilon$ such that the
measure $\nu_{\varepsilon} =
\mathbf{1}_{K_{\varepsilon}}(x) \nu(dx)$ satisfies, for every
open set $U\subset B$,
%e11 ###
%e30 ###
%e12 #&#
\begin{equation}
\label{inegnu1} \nu_{\varepsilon}(U) \leq D \times \operatorname{diam}(U)^{\alpha+
\delta}.
\end{equation}
\end{definition}
In a rough sense, the class $R_\alpha$ consists of Radon measures that
are locally ``\mbox{$\alpha$-}H\"older.'' For instance, the Lebesgue measure
on $\mathbb{R}^d$ is in the class $R_d$.

%th9 #&#
\begin{theorem}\label{AMontkappa}
Under Assumption~\ref{vitg}, the measure $M$ is nondegenerate as soon
as $\psi'(1) -\psi(1)< \alpha$.
\end{theorem}

%co10 #&#
\begin{corollary}
Under Assumption~\ref{vitg} and provided that $\psi'(1) -\psi(1)<
\alpha$, the measure $M$ almost surely does not possess any atom.
\end{corollary}

%%%%%%%%%%%%%%%%%%%%%%%%%%%%%%%%%%%%%%%%%%%%%%%%%%%%%%%%%%%%%%%%%%%%%%%%%%%%%%%%%%%%%%%%%%%%%%%%%%%%%%%%%%%%%%%%%%%%%%%%%%%%%%%%%
%s3 ###
%s3 #&#
\section{Star scale invariant random measures}\label{secstar}
%%%%%%%%%%%%%%%%%%%%%%%%%%%%%%%%%%%%%%%%%%%%%%%%%%%%%%%%%%%%%%%%%%%%%%%%%%%%%%%%%%%%%%%%%%%%%%%%%%%%%%%%%%%%%%%%%%%%%%%%%%%%%%%%%
In this section, we explain the connection between $\star$-scale
invariant random measures and L\'evy multiplicative chaos. On one hand,
we show that every L\'evy multiplicative chaos defines a \mbox{$\star$-}scale
invariant random measure provided that the rate function is defined by
$g(y)=y$ for all $y\geq1$. Then we show that all $\star$-scale
invariant random measures with a moment of order strictly greater than
$1$ are L\'evy multiplicative chaos, up to a few additional assumptions.

%s3.1 ###
%s3.1 #&#
\subsection{Construction}
%%%%%%%%%%%%%%%%%%%
We consider $X^\epsilon$, $\widetilde{M}{}^\epsilon$ and $M$ as
constructed in Section~\ref{genchaos} with generator $X$ and rate
function $g$ given by $g(y)=y$ for all $y\geq1$. Hence the process
$X^\epsilon$ is given by
%e13 ###
%e12 ###
%e13 #&#
%e14 #&#
\begin{eqnarray}
\qquad X^\epsilon_t&=&b\ln\frac{1}{\epsilon}+\int
_1^{1/\epsilon}\!\!\!\int_{\mathbb{R}^d}\cos(ty\cdot
\lambda) W(d\lambda,dy)
\nonumber\\[-8pt]\label{defX}  \\[-8pt]
&&{} +\int_1^{1/\epsilon}\!\!\!\int_{\mathbb{R}^d}\sin(ty\cdot\lambda) W'(d\lambda,dy)\nonumber
\\
&&{}+\int_1^{1/\epsilon}\!\!\!\int_Sf
\bigl(T_{ty}(s)\bigr) \biggl[N(ds,dy)-\bigl(1\vee\bigl|f
\bigl(T_{ty}(s)\bigr)\bigr|\bigr)^{-1}\theta(ds)\frac{dy}{y}
\biggr].
\end{eqnarray}

Let us state a simple criterion to check Assumption~\ref{vitg}:
%
%pr11 #&#
\begin{proposition}\label{Fstar}
Assumption~\ref{vitg} is satisfied if and only if
%e14 ###
%e15 #&#
\begin{equation}
\label{sphere} \sup_{|e|=1} \int_1^{+\infty}
\frac{|F(ue)|}{u} \,du<+\infty.
\end{equation}
\end{proposition}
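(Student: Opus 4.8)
The plan is to reduce both implications to a single radial change of variables, which is exact precisely because the rate function equals $g(u)=u$ on the whole range of integration (the lower limit $a\ge1$ forces $u\ge1$). Writing $x=|x|e$ with $e=x/|x|$ a unit vector and substituting $v=u|x|$, so that $\frac{du}{u}=\frac{dv}{v}$ and the lower limit $u=a$ becomes $v=a|x|$, one obtains the identity
$$\int_a^{+\infty}\frac{|F(ux)|}{u}\,du=\int_{a|x|}^{+\infty}\frac{|F(ve)|}{v}\,dv=:\Phi(e,a|x|),$$
valid for all $x\ne0$ and $a\ge1$. Since the integrand is nonnegative, $s\mapsto\Phi(e,s)$ is nonincreasing, and \eqref{sphere} is exactly the statement that $C:=\sup_{|e|=1}\Phi(e,1)<+\infty$.

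For necessity I would assume Assumption \ref{vitg} and simply specialize its inequality to $a=1$ and $x=e$ with $|e|=1$. Then $a|x|=1$, so $\ln_+\frac{1}{a|x|}=0$ and the bound collapses to $\Phi(e,1)\le h(1,e)\le\|h\|_\infty$, the last quantity being finite because $h$ is bounded. Taking the supremum over unit vectors gives $C\le\|h\|_\infty<+\infty$, i.e. \eqref{sphere}.

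For sufficiency I would assume $C<+\infty$ and exhibit an admissible pair $(\overline{F},h)$. The key quantitative input is that $F$ is continuous on $\R^d$ with $F(0)=\psi(2)-2\psi(1)$ finite (continuity follows from stochastic continuity of the generator and its second-order exponential moment, or directly from the explicit representation \eqref{version}), so that $M_0:=\sup_{|y|\le1}|F(y)|<+\infty$ by compactness. Writing $s=a|x|$ and splitting $\Phi(e,s)=\int_s^1\frac{|F(ve)|}{v}\,dv+\Phi(e,1)$ when $s<1$, I bound the first term by $M_0\int_s^1\frac{dv}{v}=M_0\ln\frac1s$ (using $|F(ve)|\le M_0$ for $v\le1$) and the second by $C$; this yields $\Phi(e,a|x|)\le M_0\ln_+\frac{1}{a|x|}+C$. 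When $s\ge1$ one has $\Phi(e,a|x|)\le\Phi(e,1)\le C$ while $\ln_+\frac{1}{a|x|}=0$, so the same bound holds. Hence Assumption \ref{vitg} is satisfied with $\overline{F}=\max(M_0,1)$ and the constant (bounded, continuous) function $h\equiv C$.

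The proof presents essentially no obstacle beyond this bookkeeping; the only point requiring a word of justification is the finiteness of $M_0$, i.e. the local boundedness of the generalized covariance $F$, which I would dispatch in one line via continuity of the two-point Laplace exponent.
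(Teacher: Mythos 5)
Your proof is correct and follows essentially the same route as the paper's: the radial change of variables $v=u|x|$, the trivial specialization $a=1$, $|x|=1$ for necessity, and the split of $\int_{a|x|}^{\infty}$ at $v=1$ for sufficiency, bounding $|F|$ near the origin by a constant (the paper uses $|F|\leq F(0)$ where you use $\sup_{|y|\leq 1}|F(y)|$ via continuity and compactness, an immaterial difference). Nothing further is needed.
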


%th12 #&#
\begin{theorem}\label{propscaleinv}
Assume that Assumption~\ref{vitg} [or equivalently (\ref{sphere})]
holds and that $\psi'(1)-\psi(1)<d$. Then $M$ is nontrivial and
$\star$-scale invariant.
\end{theorem}
Hence, the $\star$-scale invariance property only depends on the choice
of the rate function. This shows in a way that there are as many
$\star$-scale invariant random measures as stochastically continuous ID
processes [up to the condition $\psi'(1)-\psi(1)<d$].

The existence of a second order moment is ruled by the following
condition, which seems to be more conventional than the counter-example
described in (\ref{rem5}):
%
%pr13 #&#
\begin{proposition}\label{mom2}
The measure $M$ admits a second order moment if and only if $F(0)<d$.
\end{proposition}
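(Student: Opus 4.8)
The plan is to reduce the statement to the integrability of an explicit deterministic kernel extracted from the second moment of the approximating martingale $\widetilde M^\epsilon$. Fix a bounded Borel set $A$. Exploiting the independence of the scale "layers" indexed by $u\in[1,1/\epsilon]$ and the fact that each layer contributes, per unit of $\frac{du}{u}$, exactly the two-point Laplace exponent of the generator, I would first establish the exact identity
\begin{equation*}
\E[\widetilde M^\epsilon(A)^2]=\int_A\int_A\exp\Big(\int_1^{1/\epsilon}F((x-y)u)\,\frac{du}{u}\Big)\,dx\,dy .
\end{equation*}
Concretely, from the representation \eqref{defX} one computes $\E[e^{X^\epsilon_x+X^\epsilon_y}]=e^{\psi^\epsilon_{x,y}(1,1)}$ with $\psi^\epsilon_{x,y}(1,1)=\int_1^{1/\epsilon}\psi_{0,(x-y)u}(1,1)\frac{du}{u}=2\psi(1)\ln\frac1\epsilon+\int_1^{1/\epsilon}F((x-y)u)\frac{du}{u}$, the normalisation $e^{-2\psi(1)\ln\frac1\epsilon}$ in \eqref{mart} cancelling the diagonal term precisely (the check $x=y$ recovers $\psi(2)\ln\frac1\epsilon$, as it must).

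The second step is to analyse the limiting kernel $K(z):=\exp\big(\int_1^{+\infty}F(zu)\frac{du}{u}\big)$ near $z=0$. Writing $e_z=z/|z|$ and substituting $v=|z|u$,
$$\int_1^{+\infty}F(zu)\frac{du}{u}=\int_1^{+\infty}F(ve_z)\frac{dv}{v}+\int_{|z|}^1\big[F(ve_z)-F(0)\big]\frac{dv}{v}+F(0)\ln\frac1{|z|}.$$
The first term is bounded uniformly in $e_z$ by the criterion \eqref{sphere} of Proposition \ref{Fstar}, while the continuity of $F$ at the origin (a consequence of stochastic continuity) controls the second term; this yields $K(z)=|z|^{-F(0)}e^{O(1)}$ as $z\to0$. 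Since $\int_{A\times A}|x-y|^{-F(0)}\,dx\,dy$ is finite if and only if $F(0)<d$, the threshold is pinned at $d$.

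To conclude I would combine the formula with the martingale structure. For the "if" part, when $F(0)<d$ the bound on $K$ together with \eqref{sphere} gives $\int_1^{1/\epsilon}F((x-y)u)\frac{du}{u}\leq -F(0)\ln|x-y|+C$ uniformly in $\epsilon$, so that $\sup_\epsilon\E[\widetilde M^\epsilon(A)^2]<\infty$; the nonnegative martingale $\widetilde M^\epsilon(A)$ is then $L^2$-bounded, converges to $M(A)$ in $L^2$, and $\E[M(A)^2]<\infty$. For the "only if" part I argue by contraposition: if $F(0)\geq d$, Fatou's lemma applied to the $dx\,dy$ integral gives $\liminf_\epsilon\E[\widetilde M^\epsilon(A)^2]\geq\int_{A\times A}K(x-y)\,dx\,dy=+\infty$, and since $\epsilon\mapsto\E[\widetilde M^\epsilon(A)^2]$ is nondecreasing as $\epsilon\downarrow0$ (orthogonality of martingale increments) this supremum is infinite. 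Under the standing hypothesis $\psi'(1)-\psi(1)<d$ the measure is non-degenerate (Theorem \ref{prop:scaleinv}) and the martingale is uniformly integrable and closed by $M(A)$, i.e. $\widetilde M^\epsilon(A)=\E[M(A)\mid\mathcal F_\epsilon]$, so conditional Jensen yields $\E[\widetilde M^\epsilon(A)^2]\leq\E[M(A)^2]$ and hence $\E[M(A)^2]=+\infty$.

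The main obstacle is the sharp identification of the logarithmic coefficient $F(0)$ in the near-diagonal expansion of $K$: bounding $\int_{|z|}^1[F(ve_z)-F(0)]\frac{dv}{v}$ uniformly in $z$ and $e_z$ requires upgrading mere continuity of $F$ at the origin to a Dini-type control, so that the remainder stays $O(1)$ rather than merely $o(\ln\frac1{|z|})$; this is exactly what forces the borderline case $F(0)=d$ onto the divergent side. By contrast, the probabilistic ingredients — the exact second-moment formula and the passage from $\widetilde M^\epsilon$ to $M$ through uniform integrability — are routine once the deterministic kernel estimate is in hand.
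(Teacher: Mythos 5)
Your proof follows essentially the same route as the paper: the exact second-moment identity $\E[\widetilde{M}^\epsilon(A)^2]=\int_{A\times A}\exp\big(\int_1^{1/\epsilon}F(u(x-y))\frac{du}{u}\big)\,dx\,dy$, the near-diagonal asymptotics $\int_{|z|}^{\infty}F(ue_z)\frac{du}{u}\simeq F(0)\ln\frac{1}{|z|}$ already extracted in the proof of Proposition \ref{Fstar}, and the comparison of $\sup_\epsilon\E[\widetilde{M}^\epsilon(A)^2]$ with $\E[M(A)^2]$ through the martingale structure. If anything you are more careful than the paper on two points it leaves implicit, namely the justification of $\E[\widetilde{M}^\epsilon(A)^2]\leq\E[M(A)^2]$ via uniform integrability and conditional Jensen, and the borderline case $F(0)=d$, where mere continuity of $F$ at $0$ only gives an $o(\ln\frac{1}{|z|})$ error in the exponent rather than the $O(1)$ control you correctly identify as the delicate step.
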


A straightforward adaptation of our proofs shows the following:
%
%pr14 #&#
\begin{proposition}
A $\star$-scale invariant random measure $M$ is multifractal in the
sense that
\[
\lim_{t\downarrow0}\frac{\ln\mathbb E [M([0,t])^q]}{\ln t}=q-\psi
(q)+q\psi(1),
\]
where $\psi$ is the Laplace exponent of its generator.
\end{proposition}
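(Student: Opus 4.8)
The plan is to read off the multifractal exponent from the exact realisation of the $\star$-scale invariance \eqref{star} furnished by the construction of Section \ref{sec:star}, applied at the single scale $\epsilon=t$, for which the coarse field is strongly correlated over the whole interval $[0,t]$. I would first recall (as established in the proof of Theorem \ref{prop:scaleinv}, where $g(y)=y$) that \eqref{star} is realised with $\omega_\epsilon(r)=X^\epsilon_r-\psi(1)\ln\frac1\epsilon$, the cut-off field $X^\epsilon$ of \eqref{defX} being independent of $M^\epsilon$, and with $M^\epsilon(\epsilon A)\stackrel{law}{=}\epsilon^dM(A)$. Taking $\epsilon=t$ and $A=[0,t]$ then gives, for every $q$ in the range where $\psi(q)<+\infty$ and $\E[M([0,1])^q]<+\infty$,
\[
\E\big[M([0,t])^q\big]=t^{q\psi(1)}\,\E\Big[\Big(\int_{[0,t]}e^{X^t_r}\,M^t(dr)\Big)^q\Big],
\]
where, by the scaling of $M^t$, one has $M^t([0,t])\stackrel{law}{=}t^{d}M([0,1])$, so $\E[M^t([0,t])^q]=t^{qd}\E[M([0,1])^q]$ with a constant that is finite by hypothesis and strictly positive by non-degeneracy (Proposition \ref{Ndeg}).

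The heart of the argument is to show that the remaining integral factorises, up to a multiplicative quantity bounded away from $0$ and $\infty$ uniformly in $t$, into the exponential of the coarse field at a single point times the total mass $M^t([0,t])$. To this end I would introduce $\Xi_t=\sup_{r\in[0,t]}|X^t_r-X^t_0|$ and use the elementary two-sided bound
\[
e^{-\Xi_t}\,e^{X^t_0}\,M^t([0,t])\;\leq\;\int_{[0,t]}e^{X^t_r}\,M^t(dr)\;\leq\;e^{\Xi_t}\,e^{X^t_0}\,M^t([0,t]).
\]
Raising to the power $q$, taking expectations and exploiting the independence of $X^t$ and $M^t$ reduces the whole computation to the two ingredients $\E[e^{qX^t_0}]=t^{-\psi(q)}$, which is precisely \eqref{defpsi} at $\epsilon=t$, and the mixed quantities $\E[e^{qX^t_0\pm q\Xi_t}]$ measuring the oscillation of the coarse field over one correlation length.

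The main obstacle is thus to control $\E[e^{qX^t_0\pm q\Xi_t}]$ uniformly in $t\in\,]0,1]$, which I would treat by an Esscher transform: tilting the law of $X^t$ by the density proportional to $e^{qX^t_0}$ factors out exactly $\E[e^{qX^t_0}]=t^{-\psi(q)}$ and leaves $\E_q[e^{\pm q\Xi_t}]$, where $\E_q$ denotes expectation under the tilted law. Under $\E_q$ the increment field $(X^t_r-X^t_0)_{r\in[0,t]}$ is the original one plus a bounded deterministic modification (a mean shift $q\,(C^t(\cdot)-C^t(0))$ of the Gaussian part, with the analogous controlled tilt of the Lévy measure in the general ID case). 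The key point — and the place where the estimates of Section \ref{genchaos} are directly adapted — is that the increment structure over a scale-$t$ interval stays bounded uniformly in $t$: a direct computation from \eqref{defX} shows that the increment variance is governed by quantities of the form $\int_1^{1/t}\frac{dy}{y}\int_{\R^d}\big(1-\cos(r y\cdot\lambda)\big)\,R(d\lambda)$ (and their Poissonian analogues), which for $|r|\leq t$ remain bounded thanks to Assumption \ref{vitg} and the definition \eqref{defF} of $F$; a chaining argument over $[0,t]$ then upgrades this to a uniform bound $\sup_{t\in\,]0,1]}\E_q[e^{c q\Xi_t}]<+\infty$ for any fixed $c>0$.

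Granting these uniform bounds, combining the three displays yields
\[
\E\big[M([0,t])^q\big]=t^{\,qd+q\psi(1)-\psi(q)}\,\Theta(t),
\]
where $\Theta(t)$ lies between two strictly positive constants as $t\downarrow0$. Taking logarithms, dividing by $\ln t$ and letting $t\downarrow0$ then kills the factor $\ln\Theta(t)/\ln t$ and leaves $\lim_{t\downarrow0}\frac{\ln\E[M([0,t])^q]}{\ln t}=qd+q\psi(1)-\psi(q)$, which is the announced formula once one notes that $[0,t]$ is a one-dimensional interval, so that the Lebesgue rescaling contributes $t^{q}$, i.e. $d=1$. I expect the genuinely delicate step to be the uniform exponential integrability of $\Xi_t$ under the tilted law, all the rest being bookkeeping with \eqref{defpsi} and the scaling of $M^t$.
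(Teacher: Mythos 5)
Your route is genuinely different from the paper's: the paper proves this proposition by ``a straightforward adaptation'' of the multi-scale arguments of Lemmas \ref{lemfatal} and \ref{Gieq} and Theorem \ref{CNSq}, i.e.\ it iterates the $\star$-equation $N$ times with $\epsilon^N\simeq t$ and applies Jensen's inequality in the convex direction (for the upper bound, reducing everything to the one-point exponent $\psi_\epsilon(q)$) and in the concave direction with $qh=2$ (for the lower bound, reducing to the two-point quantity $F_\epsilon(q/2,r)$ near $r=0$, plus super-additivity). That scheme never touches a supremum of the field, which is precisely how the paper circumvents the absence of Gaussian concentration tools in the ID setting. You instead apply the equation once at scale $\epsilon=t$ and control the oscillation $\Xi_t=\sup_{r\in[0,t]}|X^t_r-X^t_0|$; the bookkeeping with \eqref{defpsi} and the scaling of $M^t$ is correct, but the entire burden is shifted onto the exponential integrability of $\Xi_t$ under the tilted law, and that is where the proof has a genuine gap.

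Concretely, two things go wrong in that step. First, the claim that the increment structure of $X^t$ over $[0,t]$ is \emph{bounded uniformly in $t$} ``thanks to Assumption \ref{vitg}'' is false: the Gaussian increment variance is $2\int_1^{1/t}\bigl(F(0)-F(ry)\bigr)\frac{dy}{y}=2\int_{|r|}^{|r|/t}\bigl(F(0)-F(ue_r)\bigr)\frac{du}{u}$, and Assumption \ref{vitg} only bounds $\int_a^\infty|F(ux)|\frac{du}{u}$ from above; it says nothing about the rate at which $F(0)-F(u)\to0$ as $u\to0$. If, say, $F(0)-F(u)\sim c/\ln(1/u)$ (achievable with a finite spectral measure $R$ with heavy tails, still satisfying \eqref{sphere}), the increment variance at $|r|=t$ grows like $c\ln\ln(1/t)$, and the same mechanism makes your ``bounded deterministic mean shift'' under the Esscher tilt unbounded. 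Only the weaker bound $o(\ln(1/t))$ is true in general, which would still kill $\ln\Theta(t)/\ln t$ in the limit, but it is not what you assert and it changes what must be proved. Second, even with the corrected $o(\ln(1/t))$ pointwise bounds, passing to $\sup_t\E_q[e^{cq\Xi_t}]<\infty$ (or even $\E_q[e^{cq\Xi_t}]=t^{-o(1)}$) by ``a chaining argument'' is not routine here: outside the Gaussian case you are asking for exponential moments of the supremum of a tilted ID field with a general L\'evy measure, Dudley-type entropy integrals can diverge for slowly decaying moduli such as the one above, and no tool of this kind is developed in the paper. You have correctly flagged this as the delicate step, but as written it is an unproved assertion on which the whole argument rests; the paper's Jensen-based scheme is the way to avoid it.
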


%s3.2 ###
%s3.2 #&#
\subsection{Uniqueness}
%%%%%%%%%%%%%%%%%%%
Conversely, we now want to describe as exhaustively as possible the set
of all $\star$-scale invariant random measures. For that purpose, we
introduce a few additional assumptions:
%
%as15 #&#
\begin{assumption}\label{good}
We will say that a stationary random measure $M$ is a good $\star
$-scale invariant random measure if $M$ is $\star$-scale invariant
and satisfies:
\begin{longlist}[(2)]
\item[(1)] the process $\omega_\epsilon$ admits exponential
moments of order $2$, that is,\break
\mbox{$ \mathbb E [e^{2\omega_\epsilon(0)}]<\infty$};

\item[(2)] for $\epsilon<1$, the generalized covariance kernel
associated with the ID process~$\omega_{\epsilon}$
\[
\forall x\in\mathbb{R}^d\qquad F_\epsilon(x) = \log\bigl(
\mathbb E \bigl[e^{\omega_{\epsilon}(x) + \omega_{\epsilon}(0)}\bigr
]\bigr)
\]
satisfies
%e15 ###
%e16 #&#
\begin{equation}
\forall x\neq0\qquad \bigl|F_{\epsilon}(x)\bigr|\leq C_{\epsilon}\int
_{|x|}^{+\infty}\theta(u) \,du \label{modulus}
\end{equation}
%
% \begin{eqnarray}
%|F_{\epsilon}(x)|&\to0 & \text{ as }\quad|x| \to+\infty,\label{lim}\\
% |F_{\epsilon}(x)-F_{\epsilon}(x')|&\leq C_{\epsilon}\theta\big(
for some positive constant $C_{\epsilon}$ and some decreasing
function $\theta\dvtx ]0,+\infty[\,\to\mathbb{R}_+$ such that
%e16 ###
%e17 #&#
\begin{equation}
\label{cvint} \int_1^{+\infty}\theta(u)\ln(u) \,du<+
\infty;
\end{equation}

\item[(3)] there is $\epsilon_0\in\,]0,1]$ such that, for each
    $p\geq1$, $q_1,\ldots,q_p\in\mathbb{R}$ and $t_1,\ldots,t_p\in
    \mathbb{R}^p$, the mapping
\[
(\epsilon,t_1,\ldots,t_p) \mapsto\mathbb E
\bigl[e^{iq_1\omega_\epsilon
(t_1)+i\omega_\epsilon(t_p)}\bigr]
\]
admits a partial derivative w.r.t. $\epsilon$ at $\epsilon=\epsilon
_0$ with a partial derivative continuous w.r.t. $(t_1,\ldots,t_p)$.
\end{longlist}
\end{assumption}
It turns out that the condition on the exponential moments of order $2$
of $\omega_\epsilon$ is also necessary as soon as the measure $M$
possesses a moment of order $2$. Point~2 is a decorrelation property at
infinity whereas point 3 is a regularity property.
%Furthermore, condition \eqref{lipschitz} together with \eqref{lim}
%entails
In what follows, we denote by $\psi_\epsilon$ the Laplace exponent of
$\omega_\epsilon$,
\[
\psi_\epsilon(q)=\ln\mathbb E \bigl[e^{q\omega_\epsilon(0)}\bigr]
\]
for all $q\in\mathbb{R}$ such that the above quantity is finite. Notice
that, as soon as the measure $M$ possesses a moment of order $1$, the
condition $\psi_\epsilon(1)=0$ is a necessary condition for the
solution of (\ref{star}) to be nontrivial.

The main result of this paper is the following:
%
%th16 #&#
\begin{theorem}\label{thuniq}
Consider a good $\star$-scale invariant measure $M$. Assume that $M$
admits a finite moment of order $1+\delta$ for some $\delta>0$\break  (i.e., $
\mathbb E [M(B)^{1+\delta}]<\infty$ for some open ball $B$). Then there
exists a random variable $Y\in L^{1+\delta}$ and a L\'evy
multiplicative chaos $Q$ (independent from $Y$ and nondegenerate) with
associated rate function $g(y)=y$ such that
\[
M(dx)\stackrel{\mathrm{law}}{=}YQ(dx).
\]
\end{theorem}

We conjecture that the same theorem holds if $M$ is a $\star$-scale
invariant measure with a finite moment of order $1+\delta$ for some
$\delta>0$. Therefore, we think Assumption~\ref{good} is just a
technical assumption (which we cannot avoid at present) and that our
theorem characterizes all $\star$-scale invariant measure with a finite
moment of order $1+\delta$ for some $\delta>0$. The general case of
$\star$-scale invariant measures with no finite moment assumption is
currently under investigation and requires the introduction of a
different set of measures (work in progress).

%re17 #&#
\begin{remark}
When $M$ is a good $\star$-scale invariant random measure, the law of~$M$ is entirely characterized by the law of the process
$\omega_\epsilon$ in (\ref{star}) for some \mbox{$\epsilon\in\,]0,1[$}.
Furthermore, the law of the finite-dimensional distributions of the
generator $X$ can be recovered from\vadjust{\goodbreak} those of $\omega_\epsilon$ by the
following procedure: define the L\'evy exponents $\eta^\epsilon$,
$\eta$ of $\omega_\epsilon$ and $X$, that is,
\begin{eqnarray*}
\mathbb E \bigl[e^{iq_1\omega_\epsilon(t_1)+\cdots+iq_p\omega_\epsilon
(t_p)}\bigr]&=&e^{\eta^\epsilon(q_1,\ldots,q_p,t_1,\ldots,t_p)},
\\
\mathbb E \bigl[e^{iq_1X_{t_1}+\cdots+iq_pX_{t_p}}\bigr]&=&e^{\eta(q_1,\ldots,q_p,t_1,\ldots,t_p)}.
\end{eqnarray*}
Then we have
\[
\partial_\epsilon\eta^\epsilon(q_1,\ldots,q_p,t_1,\ldots,t_p)=-\frac{1}{\epsilon^2}
\eta\biggl(q_1,\ldots,q_p,\frac{t_1}{\epsilon
},\ldots,\frac{t_p}{\epsilon}\biggr).
\]
\end{remark}
%
%%%%%%%%%%%%%%%%%%%%%%%%%%%%%%%%%%%%%%%%%%%%
%%%%%%%%%%%%%%%%%%%%%%%%%%%%%%%%%%%%%%%%%%%%%
%In our definition (\ref{star}) of $\star$-scale invariant measures, we
%require the process $(\omega_{\epsilon}(x))_{x \in\R^d}$ to be ID for
%all $\epsilon< 1$. In fact, this is no restriction as we now show. To
%see this, assume that we are given a stochastically continuous process
%$(\omega_{\epsilon}(x))_{x \in\R^d,\epsilon\in]0,1]}$ satisfying
%(\omega_{\epsilon\epsilon'}(x))_{x \in\R^d}\stackrel{law}{=} (
%where $\omega_{\epsilon}$ and $\tilde{\omega}_{\epsilon'}$ are
%independent copies of $\omega_{\epsilon}$ and $\omega_{\epsilon'}$. We
%fix $\epsilon$ and consider $\epsilon_n={\epsilon}^{\frac{1}{n}}$. Of
%course $\epsilon_n^n=\epsilon$. By iterating the cascade rule (
%(\omega_{\epsilon}(x))_{x \in\R^d}\stackrel{law}{=} (\sum_{k=0}^{n-1}
%where the $\omega_{\epsilon_n}^{(k)}$ are independent processes of law
%$\omega_{\epsilon_n}$. Fix $x,y \in\R^d$. We therefore have for all $
%)+\lambda\omega_{\epsilon_n}^{(k)}(\frac{y}{\epsilon_n^k})
%The stochastic continuity of the process $\omega$ with respect to $
%By a classical theorem on independent triangular arrays (see chapter
%XVII in \cite{Feller}), this shows that the couple $( \omega_{
%show that, for all $(x_1, \cdots, x_n)$, the vector $(\omega_{
%
%%%%%%%%%%%%%%%%%%%%%%%%%%%%%%%%%%%%%%%%%
%%%%%%%%%%%%%%%%%%%%%%%%%%%%%%%%%%%%%%%%%
%s4 ###
%s4 #&#
\section{Examples}
%%%%%%%%%%%%%%%%%%%%%%%%%%%%%%%%%%%%%%%%%
%%%%%%%%%%%%%%%%%%%%%%%%%%%%%%%%%%%%%%%%%
%s4.1 ###
%s4.1 #&#
\subsection{Lognormal case}
%%%%%%%%%%%%%%%%%%%%
The lognormal case, that is, when the generator of the $\star$-scale
invariant measure is a Gaussian process, has been entirely treated in
\cite{allez}. Of course, the assumptions are less restrictive
concerning good $\star$-scale invariant measures since their generator
can be entirely described with its two marginals, that is its
covariance function. As a consequence, we do not require
Assumption~\ref{good}, point~(3) in the lognormal case.

%The lognormal case appears in fact quite naturally as we know
%describe. We suppose that we are given a probability space with the
%following structure: a filtration $(\mathcal{F}_{\epsilon})_{\epsilon
%following holds:
%$M_{\epsilon',\epsilon}$ are stationary measures independent from $
%M_{\epsilon',\epsilon} \stackrel{law}{=} \epsilon' M_{\epsilon} (
%For all $\epsilon',\epsilon\leq1$, we have the following
%factorisation\dvtx
%M_{\epsilon}(dx)=e^{\omega_{\epsilon',\epsilon}(x)}M_{\epsilon',
%where $(\omega_{\epsilon',\epsilon}(x))_{x \in\R}$ is adapted to $
%For all $\epsilon$, $x \rightarrow\omega_{\epsilon}(x)$ is cadlag
%almost surely.
%For some $x$, $\epsilon\rightarrow\omega_{\epsilon}(x)$ is
%continuous almost surely
% The above assumptions lead to the lognormal case\dvtx
%For all $\epsilon\leq1$, the process $(\omega_{\epsilon}(x))_{x \in

%s4.2 ###
%s4.2 #&#
\subsection{Reminder about log-ID independently scattered random measures}
%%%%%%%%%%%%%%%%%%%%
$\!\!$The next examples are based on log-ID independently scattered random
measures so that we first collect a few well-known facts about these
measures. The reader is referred to \cite{rosinski} for further
details.

We remind the reader that an ID independently scattered random measure
$\mu$ distributed on a measurable space $(S,\mathcal{B}(S))$ with
control measure $\Gamma$ and kernel $K$~is a collection of random
variables $(\mu(A),A\in\mathcal{B}(S))$ such that:
\begin{longlist}[(2)]
\item[(1)] for every sequence of disjoint sets $(A_n)_n$ in $\mathcal{B}(S)$,
the random variables $(\mu(A_n))_n$ are independent and
\[
\mu\biggl(\bigcup_nA_n \biggr)=\sum
_n\mu(A_n)\qquad\mbox{a.s.};
\]

\item[(2)] for any measurable set $A$ in $\mathcal{B}(S)$, $\mu(A)$ is an ID
random variable whose characteristic function is characterized by
\[
\mathbb E \bigl(e^{iq\mu(A)}\bigr)= \mathbb E \bigl[e^{it \mu(A)}\bigr]=
\exp\biggl(\int_AK(q,s)\Gamma(ds) \biggr).
\]
The control measure $\Gamma$ is a positive $\sigma$-finite measure on
$S$ and the kernel $K$ takes on the form
%e17 ###
%e18 #&#
\begin{equation}
K(q,s)=iqa(s)-\frac{1}{2}q^2\sigma^2(s)+\int
_\mathbb{R}\bigl(e^{iqz}-1-iq\tau(z)\bigr)
\varrho(s,dz),
\end{equation}
where
%e18 ###
%e19 #&#
\begin{equation}
\label{equalone} \bigl|a(s)\bigr|+\sigma^2(s)+\int_\mathbb{R}
\min\bigl(1,z^2\bigr)\varrho(s,dz)=1\qquad\theta\mbox{ a.e.}
\end{equation}
Here $\sigma,a$ belong to $L^\infty(S,\Gamma)$ ($\sigma$ nonnegative)
and $\varrho\dvtx S\times\mathcal{B}(\mathbb{R})\to[0,+\infty]$ is such
that for each fixed $s\in S$, $\varrho(s, dz)$ is a L\'evy measure on
$\mathbb{R}$ and for each $B\in\mathcal{B}(\mathbb{R})$ the function
$\varrho(\cdot,B)$ is measurable and finite whenever $0$ does not
belong to the closure of $B$. The function $\tau$ is any truncation
function. The random measure $\mu$~is characterized by the triple of
measures
$(a(s)\theta(ds),\sigma^2(s)\theta(ds),\varrho(s,dz)\theta(ds))$.
Conversely, to such triple corresponds a unique (in law) ID
independently scattered random measure.
\end{longlist}

%s4.3 ###
%s4.3 #&#
\subsection{Barral--Mandelbrot's type \texorpdfstring{$\star$}{star}-scale invariant
MRMs}\label{barral}
%%%%%%%%%%%%%%%%%%%%

We consider the situation when the dimension $d$ is equal to $1$. We
introduce an ID independently scattered random measure $\mu$
distributed on $(\mathbb{R}\times\mathbb{R}^*_+,\mathcal{B}(\mathbb
{R}\times\mathbb{R}^*_+))$ with
control measure
\[
\Gamma(dt,dy)=dt y^{-2} \,dy
\]
and kernel
\[
K\bigl(q,(t,y)\bigr)=\varphi(q)=i mq-\frac{1}{2}\sigma^2q^2+
\int_{\mathbb
{R}^*} \bigl(e^{iqx} -1 -iqx\mathbf{1}_{|x|\leq1}
\bigr) \nu(dx),
\]
where $\nu(dx)$ is a L\'evy measure on $\mathbb{R}$ and $m,\sigma\in
\mathbb{R}$. We denote by $\psi$ the Laplace exponent associated with
$\varphi$, that is $\psi(q)=\varphi(-iq)$ whenever it makes sense to
consider such a quantity. We assume that $\psi(1)=0$.

We can then define the stationary stochastically continuous ID
process
$(\omega_l(t))_{t\in\mathbb{R}}$ for $l > 0$ by
\[
\omega_l(t) = \mu\bigl(\mathcal{A}_l(t) \bigr),
\]
where $\mathcal{A}_l(t)$ is the triangle like subset
$\mathcal{A}_l(t)=\{(s,y)\in\mathbb{R}\times\mathbb{R}^*_+\dvtx  l \leq
y \leq T, -y/2 \leq t-s \leq y/2\}$, see Figure~\ref{fig1}.

%f1 #&#
\begin{figure}%[h]

\includegraphics{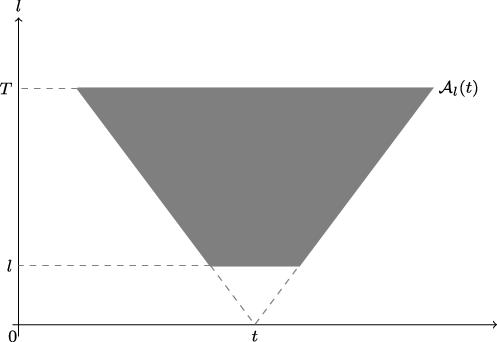}

\caption{Triangle like subset $\mathcal{A}_l(t)$.}\label{fig1}
%%\draw[very thin,color=gray] (-0.1,-0.1) grid (6.1,4.1);
%node[pos=0.5,below]{$t$};
%%\draw[color=red] plot (\x,3*\x/5) node[right] {$\xi$ linear (K41
%theory)};
%%\draw[color=blue] plot (\x,6*\x/5-3/25*\x^2) node[right] {$\xi$ non%linear (K062 theory)};
%%\draw[color=orange] plot (\x,{0.05*exp(\x)}) node[right] {$f(x) =
% \draw[color=gray,-,thick, fill=gray] (1,4) -- (7,4)
%node[right,thick,black]{$\mathcal{A}_l(t)$} -- (4.75,1) -- (3.25,1) --
%(1,4);
% \draw[color=gray,style=dashed] (3.25,1) -- (4,0) -- (4.75,1);
% \draw[color=gray,style=dashed] (3.25,1) -- (0,1)
%node[left,color=black]{$l$};
% \draw[color=gray,style=dashed] (1,4) -- (0,4)
%node[left,color=black]{$T$};
%
\end{figure}

Define now the random measure $M_l$ by $M_l(dt) = e^{\omega_l(t)}
\,dt$. Almost surely, the family of measures $(M_l(dt))_{l>0}$ weakly
converges toward a random measure $M$. When $\psi'(1)-\psi(1)<1$, this
measure is not trivial; see \cite{bacry,Bar}.

Let us check that $M$ is a good $\star$-scale invariant random measure.
Fix \mbox{$\epsilon<1$}, and define the sets $\mathcal {A}_{l,\epsilon
T}(t)=\{(s,y)\dvtx  l \leq y \leq\epsilon T, -y/2 \leq t-s \leq y/2\}$
and $\mathcal{A}_{\epsilon T, T}(t)=\{(s,y)\dvtx  \epsilon T \leq y
\leq T, -y/2 \leq t-s \leq y/2\}$. Note that $\mathcal{A}_l(t) =
\mathcal{A}_{l,\epsilon T}(t) \cup\mathcal{A}_{\epsilon T,T}(t) $ and
that those two sets are disjoint, see Figure~\ref{fig2}. Thus we can write for every
measurable set $A$
%e19 ###
%e20 #&#
\begin{equation}
\label{eql} M_l(A) = \int_A
e^{\omega_{\epsilon T,T}(t)} e^{\omega_{l,\epsilon
T}(t)} \,dt
\end{equation}
with $\omega_{\epsilon T,T}(t) = \mu(\mathcal{A}_{\epsilon T,T}(t))$
and $\omega_{l,\epsilon T}(t)= \mu( \mathcal{A}_{l,\epsilon T}(t))$.

%f2 #&#
\begin{figure}[b]

\includegraphics{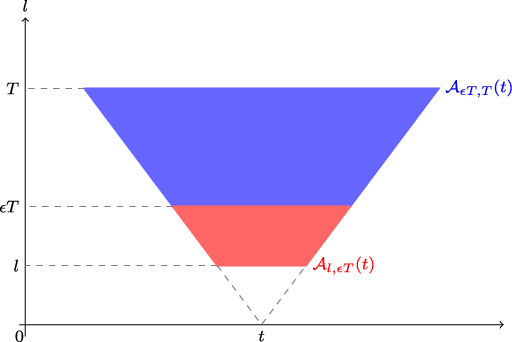}

\caption{Construction of the sets $\mathcal{A}_{\epsilon T,T}(t)$ and
$\mathcal{A}_{l,\epsilon T}(t)$.}\label{fig2}
%%\draw[very thin,color=gray] (-0.1,-0.1) grid (6.1,4.1);
%node[pos=0.5,below]{$t$};
%%\draw[color=red] plot (\x,3*\x/5) node[right] {$\xi$ linear (K41
%theory)};
%%\draw[color=blue] plot (\x,6*\x/5-3/25*\x^2) node[right] {$\xi$ non%linear (K062 theory)};
%%\draw[color=orange] plot (\x,{0.05*exp(\x)}) node[right] {$f(x) =
% \draw[color=blue!60,-,thick, fill=blue!60] (1,4) -- (7,4)
%node[right,thick,blue]{$\mathcal{A}_{\epsilon T,T}(t)$} -- (5.5,2) --
%(2.5,2) -- (1,4);
% \draw[color=red!60,-,thick, fill=red!60] (5.5,2) -- (2.5,2) --
%(3.25,1) -- (4.75,1) node[right,thick,red]{$\mathcal{A}_{l,\epsilon
%T}(t)$} -- (5.5,2);
% \draw[color=gray,thin,style=dashed] (3.25,1) -- (4,0) -- (4.75,1);
% \draw[color=gray,thin,style=dashed] (2.5,2) -- (0,2)
%node[left,color=black]{$\epsilon T$};
% \draw[color=gray,thin,style=dashed] (3.25,1) -- (0,1)
%node[left,color=black]{$l$};
% \draw[color=gray,thin,style=dashed] (1,4) -- (0,4)
%node[left,color=black]{$T$};
%
\end{figure}

We then study equation (\ref{eql}) in the limit $l \to0$; we obtain
%e20 ###
%e21 #&#
\begin{equation}
M(A) = \int_A e^{\omega_{\epsilon T,T}(t)} M^{\epsilon}(dt),
\end{equation}
where $M^{\epsilon}$ is the\vspace*{-2pt} limit when $l \to0$ of the random measure
$M^\epsilon_l(dt)=e^{\omega_{l,\epsilon T}(t)} \,dt$. We easily verify
that $M^\epsilon(\epsilon A) \stackrel{\mathrm{law}}{=} \epsilon M(A)$ writing
%e21 ###
%e22 #&#
\begin{equation}
M^\epsilon_l(\epsilon A) = \epsilon\int
_A e^{\omega_{l,\epsilon
T}(\epsilon t)}\,dt
\end{equation}
and checking that the finite-dimensional marginals of the process
$(\omega_{l,\epsilon T}(\epsilon t))_{t \in\mathbb{R}}$ are the same as
the one of $(\omega_{l,T}(t))_{t\in\mathbb{R}}$; see \cite{Bar}.

By computing the L\'evy exponents of the process $\omega_{\epsilon
T,T}(t)$,
%
%e22 ###
%e23 #&#
\begin{equation}
\mathbb E \bigl[e^{iq_1\omega_{\epsilon
T,T}(t_1)+\cdots+iq_p\omega_{\epsilon
T,T}(t_p)}\bigr]=e^{\psi^\epsilon(q_1,\ldots,q_p,t_1,\ldots,t_p)},
\end{equation}
we obtain
%e23 ###
%e24 #&#
\begin{equation}
\psi^\epsilon(q_1,\ldots,q_p,t_1,\ldots,t_p)=\int_1^{1/\epsilon
}\!\!\!\int
_\mathbb{R}\varphi\Biggl(\sum_{j=1}^pq_jf
\bigl(T_{yt_j}(r)\bigr) \Biggr) \,dr\frac{dy}{y},
\end{equation}
where $f(r)=\mathbf{1}_{[-T/2,T/2]}(r)$ and $T_s\dvtx
t\in\mathbb{R}\mapsto t-s\in\mathbb{R}$ is the usual shift on
$\mathbb{R}$. It is then straightforward to check that $M$ is good
provided that $\int_{z>1}e^{2z}\nu(dz)<+\infty$. We stress that the
L\'evy exponents of the generator, say $X$, are given by
\[
\mathbb E \bigl[e^{iq_1X(t_1)+\cdots+iq_pX(t_p)}\bigr]=\exp\Biggl(\int
_\mathbb
{R}\varphi\Biggl(\sum_{j=1}^pq_jf
\bigl(T_{t_j}(r)\bigr) \Biggr) \,dr \Biggr).
\]

In this example, the $\star$-scale invariance property is easily
understood via the geometric properties of the process, namely the
scaling properties of the cones. Generalizing this example by means of
geometric considerations is far from being obvious and has never been
done in the literature. On the other hand, in view of the results in
this paper, the generalization is straightforward. It suffices to
change the function $f$. To make things more simple, we can, for
instance, choose $f$ equal to any measurable function bounded by $1$
with compact support.

%s4.4 ###
%s4.4 #&#
\subsection{Stable L\'evy chaos}
%%%%%%%%%%%%%%%%%%%%
We focus now on other situations of interest. We consider an infinitely
divisible independently scattered random measure $\mu$ distributed on
$\mathbb{R}$ with the Lebesgue measure $ds$ as control measure and kernel
\[
K(q,t)= \varphi(q)=i mq+\int_{0}^{\infty}
\bigl(e^{-iqx} -1\bigr)\frac
{dx}{x^{1+\alpha}}
\]
for some $\alpha\in\,]0,1[$. Then the associated Laplace exponent is
given by
\[
\forall q\geq0\qquad\psi(q)= mq-\frac{\Gamma(1-\alpha)}{\alpha
}q^\alpha.
\]
Let $(T_t)_{t\in\mathbb{R}}$ be the family of usual shifts on
$\mathbb{R}$. Let $f\dvtx \mathbb{R}\to\mathbb{R}_+$ be any integrable
function with compact support. We define
\[
\|f\|_1=\int_\mathbb{R}f(s) \,ds<+\infty,\qquad\|f
\|_\alpha=\int_\mathbb{R}f(s)^\alpha ds<+
\infty.
\]
We consider the stationary ID random process
\[
\forall t \in\mathbb{R}\qquad X_t=\int f\bigl(T_t(s)
\bigr) \mu(ds).
\]
We have
\[
\mathbb E \bigl[e^{qX_t}\bigr]=e^{\int_\mathbb{R}\psi(qf(s) )
\,ds}=e^{mq\|f\|_1-(\Gamma(1-\alpha)/\alpha)\|f\|_\alpha
q^\alpha}.
\]
So we must set $m=\frac{\Gamma(1-\alpha)\|f\|_\alpha}{\alpha\|f\|
_1}$ to ensure the normalizing condition $\psi(1)=0$. It is obvious to
check that $X$ possesses exponential moments of second order. We assume
that $\psi'(1)<1$, that is,
\[
\|f\|_\alpha<\frac{\alpha}{ \Gamma(2-\alpha)}.
\]

If we consider the L\'evy multiplicative chaos with generator $X$ and
rate function $g(y)=y$, we obtain a nontrivial good star scale
invariant random measure. The scaling factor $\omega_\epsilon$
appearing in (\ref{star}) is a stable ID process.

If we consider the L\'evy multiplicative chaos with generator $X$ with
$f(x)=\mathbf{1}_{B(x,r)}$ ($B(x,r)$ stands for the ball of radius $r$
centered at $x$) and rate function $g(y)=\sum_{n\geq
0}n\mathbf{1}_{[n,n+1[}(y)$, we recover Fan's stable L\'evy chaos.

%%%%%%%%%%%%%%%%%%%%%%%%%%%%%%%%%%%%%%%%%
%%%%%%%%%%%%%%%%%%%%%%%%%%%%%%%%%%%%%%%%%
%s5 ###
%s5 #&#
\section{Conjectures and open problems}
%%%%%%%%%%%%%%%%%%%%%%%%%%%%%%%%%%%%%%%%%
%%%%%%%%%%%%%%%%%%%%%%%%%%%%%%%%%%%%%%%%%

%s5.1 ###
%s5.1 #&#
\subsection{Convergence of the derivative martingale}
%%%%%%%%%%%%%%%%%%%%%%%%%%%%%%%%%%%%%
Consider a L\'evy multiplicative chaos on $\mathbb{R}^d$ with integrating
measure the Lebesgue measure. Let $\psi$ be the Laplace exponent to its
generator and $(X^\epsilon)_\epsilon$ the associated approximating
family. Consider the martingale
%e24 ###
%e25 #&#
\begin{equation}
\label{martheta} M_\epsilon^\theta(dx)=\int_\cdot
e^{\theta X_\epsilon(x)-\psi
(\theta)\ln(1/\epsilon)} \,dx.
\end{equation}
For $\theta\in\mathbb{R}_+$, define the function
\[
\forall q \in\mathbb{R}\qquad\xi_\theta(q)=\bigl(d+\psi(\theta)
\bigr)q-\psi(q \theta).
\]
There is at most one solution $\theta_0>0$ to the equation $\xi
'_\theta(1)=0$.

Let us discuss the (only nontrivial) situation when
$0<\theta_0<\infty$. The nondegeneracy condition of Theorem \ref{a_mont}
reads $\xi'_\theta(1)>0 $, and it is valid if and only if
$\theta<\theta_0$. Inspired by the Gaussian case (see \cite{cfKah}) or
multiplicative cascades (see \cite{cfKahPey}), prove the following:
%
%co18 #&#
\begin{conjecture}
For $\theta\geq\theta_0$, the martingale defined by (\ref{martheta})
converges almost surely toward $0$.
\end{conjecture}

Deduce the following estimate for the statistics of the maximum of a
log-correlated infinitely divisible process:
%
%co19 #&#
\begin{conjecture}
For any open bounded set $A$, we have
\[
\max_{x\in A}X_\epsilon(x)-\psi'(
\theta_0)\ln\frac{1}{\epsilon
}\to-\infty\qquad\mbox{as }\epsilon\to0.
\]
\end{conjecture}

At the critical point $\theta=\theta_0$, we must introduce the
so-called derivative martingale (see \cite{Rnew7})
%e25 ###
%e26 #&#
\begin{equation}
\label{derivmartheta} D_\epsilon^\theta(dx)=\int_\cdot
\biggl(\psi(\theta_0)\ln\frac
{1}{\epsilon}-X_\epsilon(x)
\biggr)e^{\theta_0 X_\epsilon(x)-\psi(\theta
_0)\ln(1/\epsilon)} \,dx.
\end{equation}

%co20 #&#
\begin{conjecture}
Prove that the derivative martingale almost surely converges toward a
nontrivial positive random measure, denoted by $M'$. Prove that $M'$
does not possess atoms.
\end{conjecture}

%co21 #&#
\begin{conjecture}
Prove that the derivative martingale can be obtained as a suitable
renormalization of the sequence $(M^{\theta_0}_\epsilon)_\epsilon$.
More precisely,
\[
\sqrt{\ln\frac{1}{\epsilon}}M^{\theta_0}_\epsilon(dx)\to c
M'(dx)\qquad\mbox{as }\epsilon\to0
\]
for some deterministic factor $c$.
\end{conjecture}
%
%s5.2 ###
%s5.2 #&#
\subsection{Conjectures about star scale invariance}
%%%%%%%%%%%%%%%%%%%%%%%%%%%%%%%%%%%%%
Consider the star scale invariance equation in great generality, that
is:
%
%de22 #&#
\begin{definition}[(Star scale invariance)] \label{defgeneral}
A random Radon measure $M$ is star scale invariant if for all
$0<\epsilon\leq1$, $M $ obeys the cascading rule
%e26 ###
%e27 #&#
\begin{equation}
\label{starc} \bigl(M (A) \bigr)_{A\in\mathcal{B}(\mathbb
{R}^d)}\stackrel{\mathrm{law}}{=} \biggl(\int
_Ae^{ \omega_{\epsilon}(r)}M^\epsilon(dr)
\biggr)_{A\in
\mathcal{B}(\mathbb{R}^d)},
\end{equation}
where $\omega_{\varepsilon}$ is a stationary stochastically continuous
Gaussian process, and $M^{ \varepsilon}$ is a~random measure
independent from $\omega_{\varepsilon}$ satisfying the scaling relation
%e27 ###
%e28 #&#
\begin{equation}
\label{star1c} \bigl(M^{\epsilon}(A) \bigr)_{A\in\mathcal{B}(\mathbb
{R}^d)}\stackrel{\mathrm{law}} {=}
\biggl(M \biggl(\frac{A}{\epsilon}\biggr) \biggr)_{A\in
\mathcal{B}(\mathbb{R}^d)}.
\end{equation}
\end{definition}

Observe that the main difference with (\ref{star}) is that we do not
impose here the normalization $ \mathbb E [e^{
\omega_{\epsilon}(r)}]=\epsilon^d$. As soon as the measure possesses a
moment of order $1+\delta$ for some $\delta>0$, the condition $ \mathbb
E [e^{ \omega_{\epsilon}(r)}]=\epsilon^d$ must be satisfied. So it
remains to investigate situations when the measure possesses moments of
at most order $1$.

Inspired by the discrete multiplicative cascade case (see
\cite{durrett}), we conjecture:
%
%co23 #&#
\begin{conjecture}
Prove that, if $M$ is a good star scale invariant measure, there exists
a $\alpha\in\,]0,1]$ such that
\[
\mathbb E \bigl[e^{\alpha\omega_{\epsilon}(r)}\bigr]=\epsilon^{ d}.
\]
\end{conjecture}

Assuming this, we may follow the proof of Theorem \ref{thuniq} to see
that the process $\alpha\omega_{\epsilon}$ has a structure given by
(\ref{version2}).\vadjust{\goodbreak} More precisely, we can then rewrite the process
$\omega_\epsilon$ as
%e28 ###
%e29 #&#
\begin{equation}
\label{structc} \omega_{\epsilon}(x)=\frac{1}{\alpha}X_\epsilon(x)-
\frac{ \psi
(1)}{\alpha}\ln\frac{1}{\epsilon}-\frac{d}{\alpha}\ln\frac
{1}{\epsilon}
\end{equation}
for some family $(X_\epsilon)_\epsilon$ of the type (\ref{version2}),
and $\psi$ is its Laplace exponent.

%co24 #&#
\begin{conjecture}
Assume that $M$ is an ergodic good star scale invariant measure, and
let (\ref{structc}) be the decomposition of $\omega_\epsilon$:
\begin{longlist}[(3)]
\item[(1)] If $\alpha=1$ and $\psi'(1)-\psi(1)<d$, then the law of
the solution $M$ is Levy multiplicative chaos multiplicative
chaos with rate function $g(y)=y$ up to a deterministic
multiplicative constant; see Theorem \ref{thuniq}.

\item[(2)] If $\alpha=1$ and $\psi'(1)-\psi(1)=d$, prove that the
law of the solution $M$ is that of the limit of the derivative
martingale, namely $M'$ described above, up to a multiplicative
constant.

\item[(3)] If $\alpha<1$ and $\psi'(1)-\psi(1)<d$, prove that $M$
    is an ``atomic L\'evy multiplicative chaos'' (see \cite{Rnew4}
    in the Gaussian case) up to a multiplicative constant. More
    precisely, the law can be constructed as follows:
\begin{enumerate}[(a)]
\item[(a)] Sample the standard Levy multiplicative chaos
\[
\widebar{M}(dx)=\lim_{\epsilon\to0}\int_\cdot
e^{
X_\epsilon(x)-\psi(1)\ln(1/\epsilon)} \,dx.
\]
The measure $\widebar{M}$ is perfectly defined since
$\psi'(1)-\psi(1)<d$.

\item[(b)] Sample a point process $M$ whose law, conditioned on
    $\widebar{M}$, is that of an independently scattered random
    measure characterized by
\[
\forall q\geq0\qquad\mathbb E \bigl[e^{-qM(A)}|\widebar{M}
\bigr]=e^{-q^\alpha\widebar{M}(A)}.
\]
\end{enumerate}

\item[(4)] If $\alpha<1$ and $\psi'(1)-\psi(1)=d$, prove that $M$
is an atomic L\'evy multiplicative chaos of a second type. More
precisely, the law can be constructed as follows:
\begin{enumerate}[(a)]
\item[(a)] sample the derivative L\'evy multiplicative chaos
$M'(dx)$ as described above;

\item[(b)] sample a point process $M$ whose law, conditioned on
$M'$, is that of an independently scattered random measure
characterized by
\[
\forall A\in\mathcal{B}\bigl(\mathbb{R}^d\bigr),\ \forall q\geq0 \qquad
\mathbb E \bigl[e^{-qM(A)}|M'\bigr]=e^{-q^\alpha M'(A)}.
\]
\end{enumerate}
\end{longlist}
\end{conjecture}

The reader may find in \cite{Rnew4,Rnew7} some further conjectures in
the Gaussian case that can also be adapted to this framework. In
particular, adapting these conjectures to our framework, the reader may
deduce conjectures about the glassy phase and freezing phenomena of
log-correlated infinitely divisible random potentials and about the
asymptotics of the extreme values of log-correlated infinitely
divisible random fields.

\begin{appendix}
\section{\texorpdfstring{Proof of Lemma \lowercase{\protect\ref{lemme_id}}}{Proof of Lemma 1}}

We first state the following intermediate lemma:

%le25 #&#
\begin{lemma}\label{divmeas}
Let $(F(x))_{x \in\mathbb{R}^d}$ and $(G(x))_{x \in\mathbb{R}^d}$
be two stationary and nonnegative stochastically continuous processes.
We consider a nontrivial stationary random measure $\eta$ on $\mathbb
{R}^d$ independent of $F,G$. We suppose that there exists $\gamma>0$
such that $ \mathbb E [F(x)^{1+\gamma}]<\infty$, $ \mathbb E
[G(x)^{1+\gamma}]<\infty$, and $ \mathbb E [ \eta(K)^{\gamma}] <
\infty$ for all compact set $K$. If the following equality on measures holds:
\[
F(x) \eta(dx) \stackrel{\mathrm{law}} {=} G(x) \eta(dx),
\]
then the two processes $F$ and $G$ have same law.
\end{lemma}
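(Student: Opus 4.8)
The plan is to show that $F$ and $G$ share the same finite-dimensional distributions; since both processes are stochastically continuous and hence determined in law by their finite-dimensional marginals, this yields $F\overset{law}{=}G$. So I fix distinct points $x_1,\dots,x_n\in\R^d$ and aim to identify the joint law of the nonnegative vector $(F(x_1),\dots,F(x_n))$ through its mixed fractional moments $\E[\prod_i F(x_i)^{s_i}]$ for $s_i\geq 0$ with $\sum_i s_i$ small. The scalar caricature to keep in mind is that, if $F,G,Z$ are positive variables with $Z\perp F,G$ and $FZ\overset{law}{=}GZ$, then the Mellin transforms satisfy $\E[F^s]\E[Z^s]=\E[G^s]\E[Z^s]$ and one divides out the strictly positive factor $\E[Z^s]$.

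The key device is to read off these moments from quantities that are genuine functionals of the random measure $F(x)\eta(dx)$. For small $r>0$ the balls $B_i:=B(x_i,r)$ are disjoint, and $\mu_F(B_i):=\int_{B_i}F\,d\eta$ is a measurable functional of the measure $F\eta$; hence the hypothesis $F\eta\overset{law}{=}G\eta$ gives, for every choice of exponents,
\[
\E\Big[\prod_i \mu_F(B_i)^{s_i}\Big]=\E\Big[\prod_i \mu_G(B_i)^{s_i}\Big].
\]
I restrict to $s_i\geq 0$ with $\sum_i s_i<\min(\gamma,1)$, which guarantees finiteness: bounding $\mu_F(B_i)\le \int_K F\,d\eta$ on a ball $K\supset\bigcup_iB_i$, conditioning on $\eta$, and using Jensen (with $F\perp\eta$ and $\sum_i s_i\le1$) reduces the left-hand side to a multiple of $\E[\eta(K)^{\sum_i s_i}]<\infty$.

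Next I divide out the contribution of $\eta$. Writing $R_i^r:=\mu_F(B_i)/\eta(B_i)$ for the $\eta$-weighted average of $F$ over $B_i$, I claim
\[
\lim_{r\to 0}\frac{\E[\prod_i\mu_F(B_i)^{s_i}]}{\E[\prod_i\eta(B_i)^{s_i}]}=\E\Big[\prod_i F(x_i)^{s_i}\Big],
\]
and likewise for $G$. The denominator is finite and strictly positive (non-triviality of $\eta$ forces $\eta(B_i)>0$ with positive probability, as in Proposition \ref{Ndeg}), so it plays the role of the nonvanishing factor $\E[Z^{s}]$ above. To prove the claim I condition on $\eta$: by Fubini and stationarity, $\E_F|R_i^r-F(x_i)|\le\sup_{|y-x_i|\le r}\E|F(y)-F(x_i)|\to0$ uniformly in $\eta$, where stochastic continuity together with the uniform moment bound $\E[F(y)^{1+\gamma}]=\E[F(0)^{1+\gamma}]$ upgrades convergence in probability to $L^1$. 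Jensen applied to the average $R_i^r$ gives the uniform bound $\E_F[(R_i^r)^{1+\gamma}\mid\eta]\le\E[F(0)^{1+\gamma}]$, whence $\E_F[\prod_i(R_i^r)^{s_i}\mid\eta]$ is uniformly integrable and converges to $\E[\prod_iF(x_i)^{s_i}]$ uniformly in $\eta$. Tilting the law of $\eta$ by the probability density $\prod_i\eta(B_i)^{s_i}/\E[\prod_i\eta(B_i)^{s_i}]$ and letting $r\to0$ then produces the displayed limit.

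Combining the two steps, the numerators agree and the denominators are identical, so $\E[\prod_i F(x_i)^{s_i}]=\E[\prod_iG(x_i)^{s_i}]$ for all $s_i\ge0$ with $\sum_i s_i<\min(\gamma,1)$. Viewing $\E[\prod_i F(x_i)^{s_i}]=\E[e^{\sum_i s_i\log F(x_i)}]$ as the moment generating function of $(\log F(x_i))_i$, which is finite on an open box of exponents, standard determinacy (a moment generating function finite on an open set determines the law) identifies the joint law of $(F(x_i))_i$ on $(0,\infty)^n$; a short inclusion–exclusion over the coordinates sent to $0$ handles the possible atoms of $F(x_i)$ at $0$. This gives equality of all finite-dimensional distributions, hence $F\overset{law}{=}G$. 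I expect the delicate point to be the uniform control of the $\eta$-weighted averages $R_i^r$: the weights $\eta(\cdot)/\eta(B_i)$ are random and possibly very irregular (atomic or clustered), and one must ensure that tilting by $\prod_i\eta(B_i)^{s_i}$ does not overweight the realizations of $\eta$ on which the averaging is slow — this is exactly where the hypotheses $\E[F^{1+\gamma}]<\infty$ and $\E[\eta(K)^{\gamma}]<\infty$, through Jensen and Hölder, enter.
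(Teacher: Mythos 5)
Your strategy is the same as the paper's: recover the fractional moments of $F$ at fixed points from the masses that $F(x)\eta(dx)$ assigns to shrinking balls, normalize by the corresponding moments of $\eta$, and let the radius tend to zero using the subadditivity of $x\mapsto x^{s}$, Jensen's inequality conditionally on $\eta$, and stochastic continuity. The paper writes this out only for a single point and a single exponent $\alpha\in]0,\min(\gamma,1)[$ and concludes with ``we show similarly'' for the finite-dimensional marginals, whereas you make the multivariate step and the Mellin/moment-determinacy step explicit; for $n=1$ your argument is correct and matches the paper's in substance.

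The one genuine hole is the strict positivity of the normalizing factor $\E\big[\prod_i\eta(B_i)^{s_i}\big]$ when $n\geq 2$. You justify it by noting that non-triviality forces each $\eta(B_i)>0$ with positive probability; that much is true (stationarity plus a countable covering), but what the argument needs is that the disjoint balls are all charged \emph{simultaneously} with positive probability, and this does not follow from stationarity and non-triviality alone. For instance $\eta=\sum_{k\in\Z}\delta_{k+U}$ with $U$ uniform on $[0,1)$ is stationary, non-trivial and has all moments, yet $\eta(B(0,r))\,\eta(B(1/2,r))=0$ almost surely for $r<1/4$, so your ratio is $0/0$. Worse, for this $\eta$ the conclusion of the lemma itself fails: taking $F(t)=1+\tfrac12\cos(2\pi t+\Theta)$ and $G(t)=1+\tfrac12\cos\Theta$ with $\Theta$ uniform on $[0,2\pi)$ and independent of $U$, one checks that $F(x)\eta(dx)$ and $G(x)\eta(dx)$ have the same law while $F$ and $G$ do not. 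Some additional hypothesis on $\eta$ is therefore required to run the multivariate step --- e.g.\ the $0$--$1$ law of Proposition \ref{Ndeg}, which holds for the measures to which the lemma is actually applied and guarantees that every ball is a.s.\ charged. This gap is not yours alone, since the paper's ``we show similarly, for all $x_1,\dots,x_n$'' hides exactly the same issue, but having made the step explicit you should also make the needed assumption explicit.
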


\begin{pf} We consider the case $d=1$ (the higher dimensions work the same).
Let $\delta>0$. Notice that $ \mathbb E [\eta([0,\delta])^\alpha]>0$
for all $\alpha\in\,]0, \gamma[$. Indeed, the measure is stationary and
nontrivial. Choose now $\alpha\in\,]0, \min(\gamma,1)[$. Notice that the
mapping $x\in\mathbb{R}_+\mapsto x^\alpha$ is sub-additive. Therefore
$|x^\alpha-y^\alpha|\leq|x-y|^\alpha$ for
any $x,y\geq0$. We deduce the following inequality:% for all $\alpha
\begin{eqnarray*}
&&\biggl| \mathbb E \biggl[ \biggl( \int_0^{\delta} F(x)
\eta(dx) \biggr) ^{\alpha} \biggr] - \mathbb E \biggl[ \biggl( \int
_0^{\delta} F(0) \eta(dx) \biggr) ^{ \alpha}
\biggr] \biggr|
\\[-2.5pt]
&&\qquad \leq \mathbb E \biggl[ \biggl| \int_0^{\delta}
F(x) \eta(dx)- \int_0^{\delta} F(0) \eta(dx)
\biggr|^{\alpha} \biggr]
\\[-2.5pt]
&&\qquad \leq \mathbb E \biggl[ \biggl( \int_0^{\delta} \bigl|
F(x)-F(0)\bigr| \eta(dx) \biggr)^{\alpha} \biggr].
\end{eqnarray*}
The mapping $x\in\mathbb{R}_+\mapsto x^\alpha$ is concave. So we use
Jensen's inequality applied to $ \mathbb E [\cdot | \eta]$, and we get:
\begin{eqnarray*}
&& \biggl| \mathbb E \biggl[ \biggl( \int_0^{\delta} F(x)
\eta(dx) \biggr) ^{\alpha} \biggr] - \mathbb E \biggl[ \biggl( \int
_0^{\delta} F(0) \eta(dx) \biggr) ^{ \alpha}
\biggr] \biggr|
\\[-2.5pt]
&&\qquad  \leq\mathbb E \biggl[ \biggl( \int_0^{\delta}
\mathbb E \bigl[ \bigl| F(x)-F(0)\bigr| \bigr] \eta(dx) \biggr)^\alpha\biggr]
\\[-2.5pt]
&&\qquad \leq\sup_{x \in[0, \delta]} \mathbb E \bigl[ \bigl| F(x)-F(0)\bigr|
\bigr]^\alpha\mathbb E \bigl[ \eta[0,\delta]^{\alpha} \bigr].
\end{eqnarray*}
Since $\sup_{x \in[0, \delta]} \mathbb E [ | F(x)-F(0)| ]
\mathop{\rightarrow}\limits_{\delta\to0} 0$, we get that
\[
\frac{ \mathbb E [ ( \int_0^{\delta} F(x) \eta(dx)
) ^{ \alpha} ] }{ \mathbb E [ \eta[0,\delta
]^{\alpha} ]} \mathop{\rightarrow}\limits
_{\delta\to0} \mathbb E
\bigl[F(0)^{ \alpha}\bigr].
\]
Similarly, we get the above convergence with $F$ replaced by $G$: this
shows that $F(0)$ and $G(0)$ have the same distribution. We show
similarly, for all $x_1, \ldots, x_n$, that $(F(x_1), \ldots, F(x_n))$
and $(G(x_1), \ldots, G(x_n))$ have the same distribution.
\end{pf}

%
%We consider the case $d=1$ (the higher dimensions work the same).
%Let $\epsilon>0$. We have the following inequality for all $\alpha\in
%]0, \gamma[$ (recall that $\E[|.|^{1+\alpha}]^{\frac{1}{1+\alpha}}$ is
%a norm):
%| \E\left[ \left( \int_0^{\epsilon} F(x) \eta(dx) \right) ^{1+
%& \leq\E\left[ | \int_0^{\epsilon} F(x) \eta(dx)- \int_0^{\epsilon}
%F(0) \eta(dx) |^{1+\alpha} \right]^{\frac{1}{1+\alpha}} \\
%& \leq\E\left[ | \int_0^{\epsilon} | F(x)-F(0)| \eta(dx) |^{1+
%& \leq\E\left[ \int_0^{\epsilon} \E[ | F(x)-F(0)|^{1+\alpha} ]
%]^{\frac{1}{1+\alpha}}, \\
%& \leq\sup_{x \in[0, \epsilon]} \E[ | F(x)-F(0)|^{1+\alpha} ]^{
% \end{eqnarray*}
%where we use Jensen applied to $\E[. | \eta]$. Since $\sup_{x \in
%[0, \epsilon]} \E[ | F(x)-F(0)|^{1+\alpha} ] \underset{\epsilon\to0}{
%Similarly, we get the above convergence with $F$ replaced by $G$: this
%shows that $F(0)$ and $G(0)$ have the same distribution. We show
%similarly, for all $x_1, \cdots, x_n$, that $(F(x_1), \cdots, F(x_n))$
%and $(G(x_1), \cdots, G(x_n))$ have the same distribution.

Now, we can finish the proof of Lemma \ref{lemme_id}:

\begin{pf*}{Proof of Lemma \ref{lemme_id}}
By iterating (\ref{star}) and using the above lemma, the process
$(\omega_{\epsilon}(x))_{x \in\mathbb{R}^d}$ is such that ($\epsilon,
\epsilon' < 1$),
%e29 ###
%e30 #&#
\begin{equation}
\label{eqcascade} \bigl(\omega_{\epsilon\epsilon'}(x)\bigr)_{x \in
\mathbb{R}^d}\stackrel
{\mathrm{law}} {=} \biggl(\omega_{\epsilon}(x)+\tilde{\omega}_{\epsilon
'}
\biggl(\frac
{x}{\epsilon}\biggr)\biggr)_{x \in\mathbb{R}^d},
\end{equation}
where $\omega_{\epsilon}$ and $\tilde{\omega}_{\epsilon'}$ are
independent copies of $\omega_{\epsilon}$ and $\omega_{\epsilon'}$. We
fix $\epsilon$ and consider $\epsilon_n={\epsilon}^{1/n}$. Of
course $\epsilon_n^n=\epsilon$. By iterating the cascade rule
(\ref{eqcascade}), we get
\[
\bigl(\omega_{\epsilon}(x)\bigr)_{x \in\mathbb{R}^d}\stackrel{\mathrm{law}} {=}
\Biggl(
\sum_{k=0}^{n-1}\omega_{\epsilon_n}^{(k)}
\biggl(\frac{x}{\epsilon_n^k}\biggr)\Biggr)_{x
\in\mathbb{R}^d},
\]
where the $\omega_{\epsilon_n}^{(k)}$ are independent processes of law
$\omega_{\epsilon_n}$. Fix $x,y \in\mathbb{R}^d$. We therefore have
for all
$\lambda, \mu$,
\[
\lambda\omega_{\epsilon}(x)+\mu\omega_{\epsilon}(y)\stackrel{\mathrm{law}} {=}
\sum_{k=0}^{n-1} \mu\omega_{\epsilon_n}^{(k)}
\biggl( \frac
{x}{\epsilon_n^k} \biggr)+\lambda\omega_{\epsilon_n}^{(k)}
\biggl(\frac
{y}{\epsilon_n^k}\biggr).
\]
The stochastic continuity of the process $\omega$ with respect to
$\epsilon$ entails, for all $\eta>0$,
\[
\sup_{0 \leq k \leq n-1} P\biggl(\biggl| \mu\omega_{\epsilon_n}^{(k)}
\biggl( \frac
{x}{\epsilon_n^k} \biggr)+\lambda\omega_{\epsilon_n}^{(k)}
\biggl(\frac
{y}{\epsilon_n^k}\biggr) \biggr|> \eta\biggr) \mathop{\rightarrow}\limits
_{n \to
\infty} 0.
\]
By a classical theorem on independent triangular arrays (see Chapter
XVII in \cite{Feller}), this shows that the couple $(
\omega_{\epsilon}(x), \omega_{\epsilon}(y)) $ is ID. One proceeds
similarly to show that, for all $(x_1, \ldots, x_n)$, the vector
$(\omega_{\epsilon}(x_1), \ldots,\omega_{\epsilon}(x_n)) $ is ID.
\end{pf*}

%s7 ###
%s7 #&#
\section{\texorpdfstring{Proof of Theorem \lowercase{\protect\ref{a_mont}}}{Proof of Theorem 4}}
%%%%%%%%%%%%%%%%%%%%%%%%%%%%%%%%%%%%%%%%%
We adapt the proofs of \cite{cfKah,rhovar}.

\textit{The class} $R_{\alpha}$.
Let $B$ be a nonempty ball of $\mathbb{R}^{d}$.
We introduce the set $R_{\alpha}$ of Radon measures $\nu$ on $B$
satisfying: for any $\varepsilon> 0$, there exist $\delta> 0, D > 0$
and a~compact set $K_{\varepsilon} \subset B$ with $\nu(B \setminus
K_{\varepsilon}) < \varepsilon$ such that the measure $\nu
_{\varepsilon} = \mathbf{1}_{K_{\varepsilon}}(x) \nu(dx)$
satisfies, for every
open set $U\subset B$,
%
%e31 #&#
\begin{equation}
\label{inegnu} \nu_{\varepsilon}(U) \leq D \times \operatorname{diam}(U)^{\alpha+
\delta}.
\end{equation}
We further define the set of Radon measures $R^{\alpha}_{-}= \bigcap
_{\beta< \alpha} R^{\beta}$. For a Radon measure~$\nu$, we define the
quantity
\[
C_{\alpha}(\nu) = \int_{B \times B} \frac{1}{|x-y|^{\alpha}} \nu
(dx) \nu(dy).
\]
It is plain to see that
\[
C_{\alpha}(\nu) < \infty\quad\Longrightarrow\quad\nu\in R^{\alpha}_{-}.
\]
Conversely, a measure obeying (\ref{inegnu}) satisfies $C_\beta(\nu
)<+\infty$ for all $\beta<\alpha+\delta$.

We show the following intermediate result:
%
%le26 #&#
\begin{lemma}\label{Int}
Consider a Radon measure $\kappa\in R_{\alpha}$. Let $N$ be the Radon
measure defined on $B$ by
\[
N(dx) = \lim_{\varepsilon\searrow0} e^{X^{\varepsilon}_{x} - \psi
(1)\ln(1/\varepsilon)}\kappa(dx) =: \lim
_{\varepsilon\searrow0} N_{\varepsilon}(dx).
\]
If $\widebar{F} < \alpha$, then the martingale $(N_\epsilon
(B))_\epsilon$ is regular and $N \in R_{\alpha- \psi'(1) + \psi(1)}$.
\end{lemma}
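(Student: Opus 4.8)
The plan is to prove the two assertions separately: first that the positive martingale $(N_\gep(B))_\gep$ is regular (i.e. uniformly integrable, hence $L^1$-convergent with $\E[N(B)]=\gk(B)$), and then that $N$ almost surely lies in $R_{\ga'}$ with $\ga':=\ga-\psi'(1)+\psi(1)$. Both reduce to moment estimates over cubes $U$, the only inputs being the scaling property defining $\gk\in R_\ga$ and the covariance bound of Assumption \ref{vitg}. Writing $F_\gep$ for the cut-off covariance, so that $\E[N_\gep(dx)N_\gep(dy)]=e^{F_\gep(x-y)}\gk(dx)\gk(dy)$, I would first record that Assumption \ref{vitg} applied with $a=1$ gives $|F_\gep(z)|\leq \overline F\ln_+\frac1{|z|}+C$, and applied with $a=1/r$ gives, for the part of the field living at scales finer than $r$, the bound $\overline F\ln_+\frac r{|z|}+C$; in particular $e^{F_\gep(z)}\leq C|z|^{-\overline F}$ for $|z|\leq 1$.

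For regularity I would pass to the truncated measure. Fix $\eta>0$ and let $\gk_\eta=\ind_{K_\eta}\gk$ with $\gk(B\setminus K_\eta)<\eta$ and $\gk_\eta(U)\leq D\,\mathrm{diam}(U)^{\ga+\gd}$. For the associated martingale $N^{\gk_\eta}_\gep$ one computes $\E[(N^{\gk_\eta}_\gep(B))^2]=\int_B\int_B e^{F_\gep(x-y)}\gk_\eta(dx)\gk_\eta(dy)\leq e^C C_{\overline F}(\gk_\eta)+C'\gk_\eta(B)^2$, which is finite and uniform in $\gep$ because $\overline F<\ga<\ga+\gd$ forces $C_{\overline F}(\gk_\eta)<\infty$ (the implication recorded just after \eqref{inegnu}). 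A uniform $L^2$ bound yields uniform integrability, hence $L^1$ convergence of $N^{\gk_\eta}_\gep(B)$. Since $\E[N_\gep(B\setminus K_\eta)]=\gk(B\setminus K_\eta)<\eta$ uniformly, splitting $N_\gep(B)=N^{\gk_\eta}_\gep(B)+N_\gep(B\setminus K_\eta)$ and letting $\eta\to0$ shows $(N_\gep(B))_\gep$ is regular with $\E[N(B)]=\gk(B)$.

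For $N\in R_{\ga'}$ I would again work with the truncated measure (whose base measure I continue to write $\gk$, now satisfying $\gk(U)\leq D\,\mathrm{diam}(U)^{\ga+\gd}$); since $R_\ga$ permits discarding a set of small mass, it suffices to treat $N^{\gk_\eta}$. The key point is that a global second moment only produces the exponent $\overline F$, whereas $\overline F=F(0)\geq\psi'(1)-\psi(1)$, so the sharp drop $\psi'(1)-\psi(1)$ must be extracted differently. The mechanism is to split the field at spatial scale $r=\mathrm{diam}(U)$ into an independent coarse part (scales $[1,1/r]$, essentially constant on $U$) and a fine part $\check N$. For $h\in\,]1,2]$ the coarse part contributes a factor $\E[e^{h(\mathrm{coarse})-h\psi(1)\ln\frac1r}]=r^{-(\psi(h)-h\psi(1))}$, while log-convexity of moments controls the fine part, $\E[\check N(U)^h]\leq\E[\check N(U)]^{2-h}\E[\check N(U)^2]^{h-1}$, with $\E[\check N(U)]=\gk(U)$ and, from the fine covariance bound together with the energy estimate inside $U$, $\E[\check N(U)^2]\leq C\gk(U)r^{\ga+\gd}$. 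Combining (and passing to the limit by Fatou) gives the key cube estimate
\[
\E[N(U)^h]\leq C\,\gk(U)\,r^{(\ga+\gd)(h-1)-(\psi(h)-h\psi(1))},
\]
uniformly in $\gep$, whose decisive feature is that $\gk(U)$ appears to the \emph{first} power.

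Finally I would run Borel--Cantelli over dyadic cubes, where the first power of $\gk(U)$ is what avoids the usual loss: summing over the $\sim r^{-d}$ generation-$n$ cubes gives $\sum_U\gk(U)=\gk(B)$, with no spurious factor $r^{-d}$. Calling a cube bad if $N(U)>r^{\ga'+\gd'}$ and using $N(U)\ind_{\mathrm{bad}}\leq N(U)^h r^{-(\ga'+\gd')(h-1)}$, one finds that the union $G_n$ of bad cubes satisfies $\E[N(G_n)]\leq C\,r^{(h-1)[\gd-\gd'+\psi'(1)-\psi(1)]-(\psi(h)-h\psi(1))}$; writing $h=1+s$ the exponent is $s(\gd-\gd')-\tfrac{s^2}2\psi''(1)+o(s^2)$, which is positive for $\gd'<\gd$ and $s$ small. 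Hence $\sum_n\E[N(G_n)]<\infty$, so $\sum_n N(G_n)<\infty$ almost surely, and discarding $\bigcup_{n\geq m}G_n$ (whose $N$-mass tends to $0$ as $m\to\infty$) together with $B\setminus K_\eta$ leaves a compact set on which $N(U)\leq\mathrm{diam}(U)^{\ga'+\gd'}$ for all small dyadic, hence all open, $U$; that is $N\in R_{\ga'}$. I expect the main obstacle to be precisely this sharp-exponent step: one must avoid the global second moment (which replaces $\psi$ by $\overline F$) and instead isolate the coarse scales to recover $r^{-(\psi(h)-h\psi(1))}$, the genuinely delicate points being the rigorous justification under Assumption \ref{vitg} that the coarse field is essentially constant across $U$, and the uniform-in-$\gep$ control of the fine moments.
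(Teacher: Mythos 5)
Your proof is viable but follows a genuinely different route from the paper's. For the membership $N\in R_{\ga-\psi'(1)+\psi(1)}$ the paper introduces the Peyri\`ere (rooted) measure $\Q$ on $\Omega\times K$ defined by $\int f\,d\Q=\E[\int_K f(\go,x)N(dx)]$, shows that under $\Q$ the process $u\mapsto X^{e^{-u},1}_x$ is a L\'evy process with drift $\psi'(1)-\psi(1)$ (so that $X^{e^{-u}}_x/u\to\psi'(1)$ $N$-a.s.\ by the LLN, made uniform by Egoroff), and then controls the small-ball mass $P_n(x)$ of the fine-scale measure by a \emph{first}-moment computation under $\Q$, which yields the exponent $-\gb+\psi'(1)-\psi(1)$ for any $\gb\in\,]\overline F,\ga[$. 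You instead extract the drop $\psi'(1)-\psi(1)$ from the first-order Taylor expansion of $s\mapsto\psi(1+s)-(1+s)\psi(1)$ in an $L^{1+s}$ estimate on dyadic cubes, followed by Borel--Cantelli; your observation that the cube estimate must carry $\gk(U)$ to the first power (so that summing over a generation costs only $\gk(B)$) is exactly the right mechanism, and your exponent bookkeeping checks out. The one step you flag as the main obstacle --- justifying that the coarse field is ``essentially constant'' on $U$ --- is in fact not an obstacle: writing $N_\gep(U)=\int_U e^{X^{r}_x-\psi(1)\ln(1/r)}\check N_\gep(dx)$ and applying Jensen to the probability measure $\check N_\gep(dx)/\check N_\gep(U)$ gives, by independence and stationarity of the coarse layer, $\E[N_\gep(U)^h]\leq r^{-(\psi(h)-h\psi(1))}\E[\check N_\gep(U)^h]$ with no constancy claim whatsoever; this is precisely the size-biasing trick the paper itself uses in the proof of Lemma \ref{Gieq}. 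With that substitution, together with Assumption \ref{vitg} applied at $a=1/r$ to get $\E[\check N_\gep(U)^2]\leq C\gk(U)r^{\ga+\gd}$ (valid since $\overline F<\ga+\gd$) and Lyapunov interpolation, your cube estimate is rigorous and uniform in $\gep$. Your regularity argument via truncation is also correct, and in fact slightly more careful than the paper's, which applies the energy bound $C_{\overline F}(\gk)<\infty$ to the untruncated $\gk$; the only remaining bookkeeping on your side is to run the dyadic argument simultaneously for a countable sequence of truncations $\eta_j\downarrow0$ so that the discarded sets have small $N$-mass almost surely.
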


% Note that by stationarity, the condition $C_{\gk,f,\mu}$ is
%equivalent to the following:
% \begin{equation}\label{crit2}
% \sup_{ \epsilon> 0} \int_{B \times B} e^{\int_{\R^{d} \times[1,1/
% \int_{\R} (e^{izf(u(y-x),s)}-1)(e^{izf(0,s)}-1) \rho(s,dz) \right)
% \end{equation}

\begin{pf}
We first show that the martingale $(N_{\epsilon }(B))_{\epsilon}$ is
regular. For this, we use the fact that $F(\cdot)$ verifies
Assumption~\ref{vitg} to get (for some positive constant
$S=\sup_{\mathbb{R}_+\times\mathbb{R}^d}|h|$)
\begin{eqnarray*}
\mathbb E \bigl[N_{\epsilon}(B)^{2}\bigr] &=& \int
_{B \times B} \mathbb E \bigl[e^{X_{x}^{\epsilon}+ X_{y}^{\epsilon}}
\bigr]
e^{-2\psi(1)\ln(1/\epsilon)} \kappa(dx) \kappa( dy)
\\
&=& \int_{B \times B} e^{\int_{1}^{1/\varepsilon}F (g(u)(x-y)
)((du)/u)} \kappa(dx) \kappa(dy)
\\
& \leq& \int_{B \times B} e^{\int_{1}^{\infty}\bigl|F (g(u)(x-y)
)\bigr|((du)/u)} \kappa(dx) \kappa(dy)
\\
& \leq& \int_{B \times B} e^{\widebar{F}\ln_+(1/|x-y|)+S} \kappa
(dx) \kappa(dy)
\\
& \leq& e^S \int_{B \times B}\max\biggl(
\frac{1}{|x-y|^{\overline
{F}}},1 \biggr)\kappa(dx) \kappa(dy)
\end{eqnarray*}
and the last integral is finite as soon as $\widebar{F} < \alpha$.
Hence, the martingale $(N_\epsilon(B))_\epsilon$ is regular.

We consider a compact set $K\subset B$. Even if it means multiplying
$\kappa$ by a positive constant, we assume that $\kappa(K) = 1$. We
consider on $\Omega\times K$ the probability measure $\mathbb{Q}$
defined by
\[
\int_{\Omega\times K} f(\omega,x) \,d\mathbb{Q}= \mathbb E \biggl[ \int
_{K} f(\omega,x) N(dx) \biggr],
\]
where $f$ is any nonnegative measurable function.

For $0 < \varepsilon' < \varepsilon< 1$, we define the process
$(X^{\varepsilon',\varepsilon}_{x})_{x \in\mathbb{R}^{d}}$ by
\[
\forall x \in\mathbb{R}^{d}\qquad X^{\varepsilon',\varepsilon}_{x} =
X^{\varepsilon'}_{x} - X^{\varepsilon}_{x} - \psi(1)\ln
\bigl(\varepsilon/\varepsilon'\bigr).
\]
Because of expression (\ref{defX}), it is straightforward to check
that, given $\varepsilon_{1} < \varepsilon_{2} < \cdots< \varepsilon
_{n}$, the processes
$X^{\varepsilon_{1},\varepsilon_{2}},X^{\varepsilon_{2},\varepsilon
_{3}}, \ldots, X^{\varepsilon_{n-1},\varepsilon_{n}} $ are $\mathbb
{Q}$-independent. Moreover, for $\lambda\geq0$ and because
$(N_\epsilon)_\epsilon$ is uniformly integrable, we have
% \ln\frac{1}{\epsilon}+\int_1^{1/\epsilon}\int_{\R^d}\cos(tg(y)
\begin{eqnarray*}
\hspace*{-5pt}&&\int e^{\lambda X^{\varepsilon',\varepsilon}_{x}} \,d \mathbb{Q}
\\
\hspace*{-5pt}&&\!\!\qquad = \int_{K} \mathbb E \biggl[\exp\biggl\{\lambda\int_{1/\epsilon}^{1/\epsilon'}\!\!\! \int_{\mathbb{R}^d}\cos\bigl(xg(y)\cdot u\bigr)
W(du,dy)
\\
\hspace*{-5pt}&&\!\!\hspace*{79pt}{} +\sin\bigl(xg(y)\cdot u\bigr) W'(du,dy) +\lambda b\ln\frac{\epsilon
}{\epsilon'}- \lambda\psi(1)\ln\bigl(\varepsilon/\varepsilon'\bigr) \biggr\}
\\
\hspace*{-5pt}&&\!\!\hspace*{57pt}{}\times \exp\biggl\{\lambda\int_{1/\epsilon}^{1/\epsilon'}\!\!\!\int_Sf\bigl(T_{tg(y)}(s)\bigr)
\\
\hspace*{-5pt}&&\!\!\hspace*{130pt}{}\times \biggl[N(ds,dy)
\\
\hspace*{-5pt}&&\!\!\hspace*{145pt}{} -\bigl(1\vee
\bigl|f\bigl(T_{tg(y)}(s)\bigr)\bigr|\bigr)^{-1}\theta(ds)\frac{dy}{y}\biggr]\biggr\}
\\
\hspace*{-5pt}&&\!\!\hspace*{220pt}{} \times \exp\biggl\{ X^{\epsilon
'}_x\hspace*{-2pt}-\psi(1)\ln\frac{1}{\epsilon'}\biggr\} \biggr] \kappa(dx)
\\
\hspace*{-5pt}&&\!\!\qquad = \int_{K} \mathbb E \biggl[\exp\biggl\{(\lambda+1)\int_{1/\epsilon
}^{1/\epsilon'}\!\!\! \int_{\mathbb{R}^d}\cos\bigl(xg(y)\cdot u\bigr)
W(du,dy)
\\
\hspace*{-5pt}&&\!\!\hspace*{80pt}{}+\sin\bigl(xg(y)\cdot u\bigr) W'(du,dy) +(\lambda+1) b\ln\frac
{\epsilon}{\epsilon'}
\\
\hspace*{-5pt}&&\!\!\hspace*{174pt}{} - (\lambda+1)\psi(1)\ln\bigl(\varepsilon
/\varepsilon'\bigr) \biggr\}
\\
\hspace*{-5pt}&&\!\!\hspace*{57pt}{}\times \exp\biggl\{(\lambda+1)\int_{1/\epsilon
}^{1/\epsilon'}\!\!\!\int_Sf\bigl(T_{tg(y)}(s)\bigr)
\\
\hspace*{-5pt}&&\!\!\hspace*{155pt}{}\times \biggl[N(ds,dy)
\\
\hspace*{-5pt}&&\!\!\hspace*{170pt}{} -\bigl(1\vee
\bigl|f\bigl(T_{tg(y)}(s)\bigr)\bigr|\bigr)^{-1}\theta(ds)\frac{dy}{y}\biggr]\biggr\} \biggr] \kappa(dx)
\\
\hspace*{-5pt}&&\!\!\qquad = \exp\biggl\{\psi(\lambda+1) \ln\bigl(\varepsilon/\varepsilon'\bigr)-(\lambda
+1)\psi(1)\ln\bigl(\varepsilon/\varepsilon'\bigr)\biggr\}.
\end{eqnarray*}
In particular, under $\mathbb{Q}$, the process $u \in\mathbb{R}^{+}
\mapsto
X^{e^{-u},1}$ is an integrable L\'{e}vy process. Thus from the strong
law of large numbers, we get that $\mathbb{Q}$-almost surely:
\[
\frac{X^{e^{-u},1}}{u} \to\psi'(1)-\psi(1),
\]
when $u \to\infty$. Consequently, $ \mathbb P $ almost surely,
%e31 ###
%e32 #&#
\begin{equation}
\label{conv} N\mbox{ a.s.},\qquad\frac{X^{e^{-u}}_{x}}{u} \to\psi'(1).
\end{equation}
In particular, by Egoroff's theorem, there exists a compact set
$K_{\varepsilon}^{1} \subset K$ such that $N(K\setminus K_{\varepsilon
}^{1}) < \varepsilon$ and the\vspace*{-2pt} convergence (\ref{conv}) is uniform with
respect to $x \in K_{\varepsilon}^{1}$. Let now $q > 0$, and define
$N_{q}(dy) = \lim_{\epsilon\searrow0} e^{X^{\epsilon,e^{-q}}_{y}}
\kappa(dy)$ and $P_{q}(x) = N_{q}(B_{x}^{q} \cap K)$ where $B_{x}^{q}$
denotes the ball centered on $x$ and with radius $e^{-q}$. We finally
define the function
\[
\theta_{q}(x,y) = \mathbf{1}_{\{|x-y| \leq e^{-q}\}},
\]
in such a way that $P_{q}(x) = \int_{K} \theta_{q}(x,y) N_{q}(dy)$.
Thus we have:
\begin{eqnarray*}
\int P_{q} \,d\mathbb{Q}&=& \mathbb E \biggl[ \int_{K \times K}
\theta_{q}(x,y) N_{q}(dx) N(dy) \biggr]
\\
&=& \lim_{\epsilon\to0} \mathbb E \biggl[ \int_{K \times K}
\theta_{q}(x,y) e^{X^{\epsilon,e^{-q}}_x+X^{\epsilon,e^{-q}}_y}\kappa
(dx) \kappa(dy) \biggr]
\\
&=& \int_{K \times K} \theta_{q}(x,y) e^{ \int_{ [e^{q},\infty]}
F(g(u)(y-x)) ((du)/u)}
\kappa(dx) \kappa(dy). % &= \lim_{\gep\searrow0} \int_{K \times K}
% L\left(f(u(y-x),s) + f(0,s),s \right) \theta(ds) \frac{du}{u} } e^{- 2
% & \leq C \int_{K \times K} \theta_{q}(x,y) e^{-\int_{\R^{d} \times
%[1,e^{q}]} L\left(f(u(y-x),s) + f(0,s),s \right)
% \theta(ds) \frac{du}{u} } e^{ 2q\psi(1)} \gk(dx) \gk(dy)
\end{eqnarray*}
%
% where in the last inequality we made use of condition $C_{\gk,f,\mu}$.

Let $\beta>\widebar{F}$ be fixed. By using Assumption~\ref{vitg} and
the above relation, we obtain (for some positive constant
$S=\sup_{\mathbb{R}_+\times\mathbb{R}^d}|h|$)
\begin{eqnarray*}
&& \int\sum_{n \geq1} e^{\beta n} P_{n}
\,d\mathbb{Q}
\\
&&\qquad  = \sum_{ n \geq 1} \int_{K \times K}
\theta_{n}(x,y) e^{\beta n} e^{ \int_{ [e^{n},\infty]} F(g(u)(y-x))
((du)/u)} \kappa(dx)
\kappa(dy)
\\
&&\qquad  = \int_{K \times K} \sum_{ 1 \leq n \leq-\ln(|x-y|)}
e^{\beta n} e^{ \int_{ [e^{n},\infty]} F(g(u)(y-x)) ((du)/u)} \kappa
(dx) \kappa(dy)
\\
&&\qquad \leq e^S \int_{K \times K} \sum
_{ 1 \leq n \leq-\ln(|x-y|)} e^{\beta n} e^{ \widebar{F}\ln(1/(e^n|x-y|))} \kappa(dx)
\kappa(dy)
\\
&&\qquad \leq e^S\int_{K \times K} \sum
_{ 1 \leq n \leq-\ln(|x-y|)} e^{(\beta-\widebar{F}) n} \frac
{1}{|x-y|^{\widebar{F}}}\kappa(dx)
\kappa(dy).
\end{eqnarray*}
Note that, for some positive constant $D$,
\[
\sum_{ 1 \leq n \leq-\ln(|x-y|)} e^{(\beta-\widebar{F}) n}\leq D
\frac{1}{|x-y|^{\beta-\widebar{F}}},
\]
in such a way that
\begin{eqnarray*}
\int\sum_{n \geq1} e^{\beta n} P_{n}
\,d\mathbb{Q} & \leq& De^S\int_{K \times K}
\frac{1}{|x-y|^{\beta}}\kappa(dx) \kappa(dy) =DD'C_\beta(
\kappa).
\end{eqnarray*}
The last term is finite as soon as $\beta< \alpha$. Thus for $\beta \in\,]\widebar{F}, \alpha[$, $\mathbb{Q}$ a.s., $ e^{\beta n} P_{n} \to0$ as
$n \to\infty$. In particular, one can find a compact set
$K_{\varepsilon}^{2} \subset K$ such that $N(K \setminus
K_{\varepsilon}^{2}) < \varepsilon$ and such that, $N$ almost surely,
\[
\limsup_{n \to\infty} \frac{\log(P_{n}(x))}{n} \leq- \beta
\]
uniformly for $x \in K_{\varepsilon}^{2}$. Setting $\widetilde{K} =
K_{\varepsilon}^{2} \cap K_{\varepsilon}^{1}$ and
$N_{\widetilde{K}} = \mathbf{1}_{\widetilde{K}}(x)N(dx)$, we get that,
uniformly with respect to $x \in\widetilde{K}$,
\begin{eqnarray*}
\limsup_{n \to\infty} \frac{\log(N_{\widetilde{K}}(B_{n}^{x}))}{n} & =&
\limsup
_{n \to\infty} \frac{\log(\int_{\widetilde{K}
\cap
B_{n}^{x}} e^{X^{e^{-n}}_{u} - \psi(1)n} N_{n}(du) )}{n}
\\
& \leq& - \beta+ \psi'(1)-\psi(1).
\end{eqnarray*}
This entails in particular that $M \in R_{\alpha- \psi'(1)+\psi
(1)}$.
\end{pf}
%
% This last quantity is finite as soon as the following is verified:
% \[
% \gb< \liminf_{n \to\infty}\frac{1}{n} \int_{\R^{d} \times[1,e]}
%L(f(0,s) + f(e^{n}u(x-y),s),s)
% \theta(ds) \frac{du}{u} - 2 \psi(1).
% \]

Making use of Lemma \ref{Int}, we now prove Theorem \ref{a_mont}.

\begin{pf*}{Proof of Theorem \ref{a_mont}}
The basic idea is to show that a L\'evy multiplicative chaos satisfying
$\psi'(1) - \psi(1) < d$ can be decomposed as an iterated L\'evy
multiplicative chaos.

First, fix an integer $n$ such that
\[
\widebar{F}< n\bigl(d-\psi'(1)+\psi(1)\bigr).
\]
There exist $n$ independent identically distributed approximating
families $(X^{(1),\epsilon}, \ldots,X^{(n),\epsilon})_{\epsilon\in]0,1[}$, respectively,\vspace*{1pt} associated with
$(S,W^{(i)},W^{\prime (i)},N^{(i)},\break R/n, \theta/n,f, (T_x)_x )$ where the
$(W^{(i)},W^{\prime (i)},N^{(i)})_{1\leq i\leq n}$ are all\vspace*{1pt} independent.
We assume that the triples $(W^{(1)},W^{\prime (1)},N^{(1)}),\ldots,
(W^{(n)},W^{\prime (n)},N^{(n)})$ are, respectively, constructed on
the probability space $(\Omega_{1}, \mathbb P ^{1}),
\ldots,(\Omega_{n}, \mathbb P ^{n}) $, and we define $\Omega=
\Omega_{1} \times\cdots\times\Omega_{n}$ equipped with the probability
measure $ \mathbb P = \mathbb P ^{1} \otimes\cdots\otimes\mathbb P
^{n}$.

We define recursively for $1 \leq k \leq n$,
%e32 ###
%e33 #&#
\begin{equation}
\label{martn} \qquad M^{(0)}(dx) = dx,\qquad M^{(k)}(dx) =
\lim_{\varepsilon
\searrow0} e^{X^{(k),\varepsilon}_{x} - (\psi(1)/n) \ln
(1/\varepsilon)} M^{(k-1)}(dx),
\end{equation}
where the limit has to be understood in the sense of weak convergence
of Radon measures. For $k \in[1,n-1]$, one has the relation
\[
\frac{\widebar{F}}{n} \leq d - \frac{k}{n}\bigl(\psi'(1) -
\psi(1)\bigr),
\]
so that we can apply recursively Lemma \ref{Int} to prove that for each
$k \leq n$,
\[
\mathbb E \bigl[M^{(k)}(B)\bigr] = \mathbb E \bigl[M^{(k-1)}(B)
\bigr]\quad\mbox{and}\quad M^{(k)} \in R_{d-(k/n)(\psi'(1) - \psi(1))}.
\]
In particular, the martingales considered in (\ref{martn}) are
uniformly integrable. Then we prove that the measures $M$ and $M^{(n)}$
have the same law. For this, we note that the following equality in law
holds:
%e33 ###
%e34 #&#
\begin{equation}
\label{lawm} M^{(n)}(dx) = \lim_{\varepsilon\searrow0 }
e^{X^{(1),\varepsilon
}_{x} + \cdots+ X^{(n),\varepsilon}_{x} - \psi(1) \ln(1/\varepsilon
)} \,dx.
\end{equation}
Indeed, consider the $\sigma$-algebra $\mathcal{G}_{\varepsilon}$
generated by $\{ X_{r}^{(1),\varepsilon'},
\ldots,X_{r}^{(n),\varepsilon'}, \varepsilon' > \varepsilon, r
\in\mathbb{R}^d \} $. Using the fact that the martingales considered in
(\ref{martn}) are uniformly integrable, we compute
\begin{eqnarray*}
&& \mathbb E \bigl[ M^{(n)}(A) | \mathcal{G}_{\varepsilon} \bigr]
\\
&&\qquad = \mathbb E \bigl[ \mathbb E \bigl[ M^{(n)}(A) \bigl| \bigl
(X_{r}^{(1),\varepsilon'},
\ldots,X_{r}^{(n-1),\varepsilon'}\bigr)_{r \in
\mathbb{R}^d,\epsilon'\in]0,1[},
\bigl(X^{(n),\varepsilon'}_{r}\bigr)_{r \in
\mathbb{R}^d, \varepsilon' > \varepsilon} \bigr] \bigr|
\mathcal{G}_{\varepsilon} \bigr]
\\
&&\qquad  = \mathbb E \bigl[ \mathbb E ^{(n)} \bigl[ M^{(n)}(A) |
\bigl(X^{(n),\varepsilon'}_{r}\bigr)_{r \in\mathbb{R}^d, \varepsilon' >
\varepsilon} \bigr] |
\mathcal{G}_{\varepsilon} \bigr]
\\
&&\qquad = \mathbb E \biggl[ \int_{A} e^{X^{(n),\varepsilon}_{r} -
(\psi(1)/n)\log(1/\varepsilon)}
M^{(n-1)}(dr) \Big| \mathcal{G}_{\varepsilon} \biggr]
\\
&&\qquad = \cdots
\\
&&\qquad = \int_{A} e^{X^{(n),\varepsilon}_{r} + \cdots+ X^{(1),\varepsilon
}_{r} - \psi(1)\log(1/\varepsilon)} \,dr.
\end{eqnarray*}
Since this last quantity has the same law as $M^\epsilon(A)$, (\ref
{lawm}) follows by passing to the limit as $\epsilon\to0$. Since $
\mathbb E [M^{(n)}(A)]=|A|$, we deduce $ \mathbb E [M(A)]=|A|$. Hence
$M$ is not trivial. Furthermore we have proved that $M\in R_{d-\psi
'(1)+\psi(1)}$. In particular, $M$~cannot possess any atom.
\end{pf*}

We further stress that the proof of Theorem \ref{AMontkappa} works
exactly the same [just replace $dx$ by $\kappa(dx)$ in the proof of
Theorem \ref{a_mont}].
%%%%%%%%%%%%%%%%%%%%%%%%%%%%%%%%%%%%%%%%%%%%%%%%%%%%%%%%%%%%%%%%%%%%%%%%%%%%%%%%%%%%%
%s8 ###
%s8 #&#
\section{\texorpdfstring{Proofs of Section \lowercase{\protect\ref{secstar}}}{Proofs of Section 3}}
%%%%%%%%%%%%%%%%%%%%%%%%%%%%%%%%%%%%%%
%%%%%%%%%%%%%%%%%%%%%%%%%%%%%%%%%%%%%%

%%%%%%%%%%%%%%%%%%%%%%%%%%%%%%%%%%%%%%
%s8.1 ###
%s8.1 #&#
\subsection{\texorpdfstring{Proof of Proposition \protect\ref{Fstar}}{Proof of Proposition 11}}
We have
\begin{eqnarray*}
\int_a^{\infty}\frac{|F(ux)|}{u} \,du&=&\int
_{a|x|}^{\infty}\frac
{|F(ue_x)|}{u} \,du,
\end{eqnarray*}
where $e_x=\frac{x}{|x|}$. For $a|x|\geq1$, this quantity is less than
(\ref{sphere}). For $a|x|\leq1$, we have the bound
\begin{eqnarray*}
\int_{a|x|}^{\infty}\frac{|F(ue_x)|}{u} \,du & =& \int
_{a|x|}^{1}\frac{|F(ue_x)|}{u} \,du+\int
_{1}^{\infty}\frac
{F(ue_x)}{u} \,du
\\[-1pt]
&\leq& F(0)\ln\frac{1}{a|x|}+\int_{1}^{\infty}
\frac{|F(ue_x)|}{u} \,du
\end{eqnarray*}
because $|F(x)|\leq F(0)$. Actually, because of the continuity of the
function $F$ at $0$, it turns out that we have $\int_{|x|}^{1}\frac
{F(ue_x)}{u} \,du\simeq F(0)\ln\frac{1}{|x|}$ as $|x|\to0$. We deduce
%e34 ###
%e35 #&#
\begin{equation}
\label{equiv} \int_{ |x|}^{\infty}\frac{F(ue_x)}{u} \,du
\simeq F(0)\ln\frac
{1}{|x|}\qquad\mbox{as }|x|\to0.
\end{equation}

%%%%%%%%%%%%%%%%%%%%%%%%%%%%%%%%%%%%%%
%s8.2 ###
%s8.2 #&#
\subsection{\texorpdfstring{Proof of Proposition \protect\ref{mom2}}{Proof of Proposition 13}}
We just have
to compute the second order moment (we use the notation $e_{x-y}=\frac
{x-y}{|x-y|}$)
\begin{eqnarray*}
\mathbb E \bigl[\widetilde{M}{}^{\epsilon}(A)^{2}\bigr] &=& \int
_{A \times A} \mathbb E \bigl[e^{X_{x}^{\epsilon}+ X_{y}^{\epsilon}}
\bigr]
e^{-2\psi(1)\ln(1/\epsilon)} \,dx\,dy
\\[-1pt]
&=& \int_{A \times A} e^{\int_{1}^{1/\varepsilon}F (u(x-y)
)((du)/u)} \,dx\,dy
\\[-1pt]
& =&\int_{A \times A} e^{\int_{|x-y|}^{|x-y|/\varepsilon}F (u
e_{x-y} )((du)/u)} \,dx\,dy.
\end{eqnarray*}
In case $M$ admits a second order moment, we deduce that the quantity
\[
\mathbb E \bigl[M(A)^{2}\bigr]=\int_{A \times A}
e^{\int_{|x-y|}^{\infty}F
(u e_{x-y} )((du)/u)} \,dx\,dy
\]
is finite. Because of (\ref{equiv}), we necessarily have $F(0)<d$.
Conversely, if $F(0)<d$, then $\sup_\epsilon\mathbb E [\widetilde
{M}{}^{\epsilon}(A)^{2}]$ is less than the above right-hand side, which
is finite. The proof is complete.

%s8.3 ###
%s8.3 #&#
\subsection{\texorpdfstring{Proof of Proposition \protect\ref{propscaleinv}}{Proof of Proposition 12}}
For $0<\epsilon<1$, $t_1,\ldots,t_p\in(\mathbb{R}^d)^p$ and
$q_1,\ldots,\break  q_p\in\mathbb{R}$ such that the following expectations make
sense, we define the Laplace exponents $ \psi^\epsilon$ of $X^\epsilon$
\[
\mathbb E \bigl[e^{q_1X^\epsilon_{t_1}+\cdots+q_pX^\epsilon
_{t_p}}\bigr]=e^{\psi^\epsilon_{t_1,\ldots,t_p}(q_1,\ldots,q_p)}.
\]
For $\epsilon'<\epsilon$, we have
\begin{eqnarray*}
&& \psi^{\epsilon'}_{t_1,\ldots,t_p}(q_1,\ldots,q_p)
\\[-5pt]
&&\quad =b\ln\frac{1}{\epsilon'}\sum_{i=1}^pq_i
+ \frac{1}{2}\int_1^{1/\epsilon'}\!\!\!\int
_{\mathbb{R}^d} \Biggl(\sum_{i=1}^pq_i
\cos(y t_iu) \Biggr)^2 R(du)\frac{dy}{y}
\\[-1pt]
&&\qquad{} + \frac{1}{2}\int_1^{1/\epsilon'}\!\!\!\int
_{\mathbb{R}^d} \Biggl(\sum_{i=1}^pq_i
\sin(y t_iu) \Biggr)^2 R(du)\frac{dy}{y}
\\
&&\qquad{}+ \int_1^{1/\epsilon'}\!\!\!\int_S
\Biggl(e^{\sum_{i=1}^pq_if(T_{t_i y}(s))}-1-\sum_{i=1}^pq_i
\frac{f(T_{t_i y}(s))}{1\vee|f(T_{t_i y}(s))|} \Biggr)\theta(ds)\frac{dy}{y}
\\
&&\quad =b\ln\frac{\epsilon}{\epsilon'}\sum_{i=1}^pq_i
+ \frac{1}{2}\int_{1/\epsilon}^{1/\epsilon'}\!\!\!\int
_{\mathbb{R}^d} \Biggl(\sum_{i=1}^pq_i
\cos(y t_iu) \Biggr)^2 R(du)\frac
{dy}{y}
\\
&&\qquad{}+ \frac{1}{2}\int_{1/\epsilon}^{1/\epsilon'}\!\!\!\int
_{\mathbb{R}^d} \Biggl(\sum_{i=1}^pq_i
\sin(y t_iu) \Biggr)^2 R(du)\frac{dy}{y}
\\
&&\qquad{}+ \int_{1/\epsilon}^{1/\epsilon'}\!\!\!\int_S
\Biggl(e^{\sum_{i=1}^pq_if(T_{t_i y}(s))}-1-\sum_{i=1}^pq_i
\frac{f(T_{t_i
y}(s))}{1\vee|f(T_{t_i y}(s))|} \Biggr)\theta(ds)\frac{dy}{y}
\\
&&\qquad{} +\psi ^{\epsilon}_{t_1,\ldots,t_p}(q_1,\ldots,q_p)
\\
&&\quad =b\ln\frac{\epsilon}{\epsilon'}\sum_{i=1}^pq_i
+ \frac{1}{2}\int_{1}^{\epsilon/\epsilon'}\!\!\!\int
_{\mathbb{R}^d} \Biggl(\sum_{i=1}^pq_i
\cos\biggl(y \frac{t_i}{\epsilon}u\biggr) \Biggr)^2 R(du)
\frac{dy}{y}
\\
&&\qquad{} +\frac{1}{2}\int_{1/\epsilon}^{1/\epsilon'}\!\!\!\int_{\mathbb{R}^d} \Biggl(\sum_{i=1}^pq_i
\sin\biggl(y \frac
{t_i}{\epsilon}u\biggr) \Biggr)^2 R(du)
\frac{dy}{y}
\\
&&\qquad{}+ \int_{1}^{\epsilon/\epsilon'}\!\!\!\int_S
\Biggl(e^{\sum_{i=1}^pq_if(T_{(t_i/\epsilon) y }(s))}-1-\sum_{i=1}^pq_i
\frac{f(T_{(t_i/\epsilon) y}(s))}{1\vee|f(T_{(t_i/\epsilon)y}(s))|} \Biggr)\theta(ds)\frac{dy}{y}
\\
&&\qquad{}+\psi^{\epsilon}_{t_1,\ldots,t_p}(q_1,\ldots,q_p)
\\
&&\quad =\psi^{\epsilon'/\epsilon}_{t_1/\epsilon,\ldots,t_p/\epsilon}(q_1,\ldots,q_p)+
\psi^{\epsilon}_{t_1,\ldots,t_p}(q_1,\ldots,q_p).
\end{eqnarray*}

Hence we can write
%e35 ###
%e36 #&#
\begin{equation}
\label{eqsum} \bigl(X^{\epsilon'}_x\bigr)_x
\stackrel{\mathrm{law}} {=}\bigl(X^{\epsilon
}_x+
\widebar{X}{}^{\epsilon'/\epsilon}_{x/\epsilon}\bigr)_x,
\end{equation}
where $\widebar{X}{}^{\epsilon'/\epsilon}$ is independent from
$X^{\epsilon}$ and has the same law as
$X^{\epsilon'/\epsilon}$. It is then plain to deduce that $M$
is $\star$-scale invariant. Indeed, define $M^\epsilon$ by
\[
\forall A\in\mathcal{B}\bigl(\mathbb{R}^d\bigr)\qquad
M^\epsilon(A)=\lim_{\epsilon'\to0}\int_Ae^{\widebar{X}{}^{\epsilon'/\epsilon}_{x/\epsilon}-\psi(1)\ln(\epsilon/\epsilon')}
\,dx.
\]
A straightforward change of variables shows that
\[
M^\epsilon(dx)\stackrel{\mathrm{law}} {=}\epsilon^dM(dx/
\epsilon).
\]
From (\ref{eqsum}), we deduce
\[
M(dx)=e^{X^{\epsilon}_x-\psi(1)\ln(1/\epsilon)}M^\epsilon(dx).
\]

%%%%%%%%%%%%%%%%%%%%%%%%%%%%%%%%%%%%%%%%%%
%%%%%%%%%%%%%%%%%%%%%%%%%%%%%%%%%%%%%%%%%%

%s9 ###
%s9 #&#
\section{\texorpdfstring{Proof of Theorem \lowercase{\protect\ref{thuniq}}}{Proof of Theorem 16}}
%%%%%%%%%%%%%%%%%%%%%%%%%%%%%%%%%%%%%%%%%%
%%%%%%%%%%%%%%%%%%%%%%%%%%%%%%%%%%%%%%%%%%
We carry out the proof in the case when the dimension is equal to $1$.
This simplifies the notation. In higher dimensions, the proof works the
same way.

The guiding line is the same as in \cite{allez}. But the lack of
convexity inequalities, which are specific to the Gaussian case, gives
rise to further technical difficulties. So we detail what differs and
refer to \cite{allez} for the proofs of the results that do not change
with respect to the Gaussian case.

%s9.1 ###
%s9.1 #&#
\subsection{Setting}
%%%%%%%%%%%%%%%%%%%%%%%%%%%%%%%%%%%%%%
We consider a nontrivial measure satisfying (\ref{star}) with a moment
of order $1+\delta$ for some $\delta>0$ and a fixed $\epsilon\in\,]0,1[$.
The first step is to prove that the measure $M$ is a L\'evy
multiplicative chaos. Since $M$ is not trivial and possesses a moment
of order at least $1$, we necessarily have
%e36 ###
%e37 #&#
\begin{equation}
\label{norm} \forall x\in\mathbb{R}\qquad\mathbb E \bigl[e^{\omega
_\epsilon(x)}
\bigr]=1.
\end{equation}
Because it is stochastically continuous and ID, the process
$\omega_\epsilon$ admits a version with a representation as in
(\ref{version}) with associated parameters
$(S_\epsilon,W_\epsilon,W'_\epsilon,\break N,\theta_{\varepsilon
},R_{\varepsilon}, f_{\varepsilon},(T^\epsilon_x)_x)$. The Laplace
transform of $ \omega_\epsilon$ is denoted by
\[
\psi_\epsilon(q)=\ln\mathbb E \bigl[e^{q\omega_\epsilon(0)}\bigr].
\]
It satisfies $ \psi_{\varepsilon}(1) =0$. We let $(X^{n})_n$ denote a
sequence of independent stationary stochastically continuous ID
processes with common law that of $\omega_{\varepsilon}$. Of course,
the law
of this sequence depends on $\epsilon$, but we remove this dependence
from the notation for the sake of clarity. We also define the measure
$M^{N}$ for $N \geq0$ by
%e37 ###
%e38 #&#
\begin{equation}
M^{N}(A) = \varepsilon^{N+1}M \biggl( \frac{1}{\varepsilon
^{N+1}}A
\biggr).
\end{equation}
We assume that the sequences $(X_n)_n$ and $(M^N)_N$ are independent.
Iterating relation (\ref{star}), we get that, for every integer $N$,
the measure $\widetilde{M}{}^{N}$ defined by
%e38 ###
%e39 #&#
\begin{equation}
\label{itt} \widetilde{M}{}^{N}(A) = \int_{A}
\exp\Biggl( \sum_{n=0}^{N}
X^{n}_{r/\varepsilon^{n}} \Biggr) M^{N}(dr)
\end{equation}
has the same law as the measure $M$.

%le27 #&#
\begin{lemma}[(See \cite{allez})]\label{birk}
Let $M$ be a stationary random measure on $\mathbb{R}$ admitting a
moment of
order $1+\delta$. There is a nonnegative integrable random variable
$Y\in L^{1+\delta}$ such that, for every bounded interval $I\subset
\mathbb{R}$,
\[
\lim_{T \to\infty} \frac{1}{T} M (T I ) = Y |I|\qquad
\mbox{almost surely and in }L^{1+\delta},
\]
where $|\cdot|$
stands for the Lebesgue measure on $\mathbb{R}$. As a consequence, almost
surely the random measure
\[
A\in\mathcal{B}(\mathbb{R})\mapsto\frac{1}{T}M(TA)
\]
weakly converges toward $Y|\cdot|$, and
$ \mathbb E _Y[M(A)]=Y |A|$ ($ \mathbb E _Y[\cdot]$ denotes the
conditional expectation
with respect to $Y$).
\end{lemma}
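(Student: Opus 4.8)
The plan is to realise the stationarity of $M$ as a measure-preserving flow, extract $Y$ as a conditional intensity, and then upgrade the ergodic theorem from the intensity level to the actual large-scale limit of $\frac1T M(T\,\cdot\,)$. First I would put $M$ on a probability space carrying a measurable $\R$-flow $(\tau_t)_{t\in\R}$ preserving $\P$ and satisfying $M(A)\circ\tau_t=M(A+t)$ for all $A\in\mathcal B(\R)$, and let $\cI$ be the $\sigma$-field of $\tau$-invariant events. Setting $\nu(A):=\E[M(A)\,|\,\cI]$, I would show that $\nu$ is a.s.\ translation invariant: for fixed $A$ and $t$, commuting the conditional expectation with $\tau_t$ (legitimate since $\tau_t$ preserves $\P$ and fixes $\cI$) gives $\nu(A)\circ\tau_t=\nu(A+t)$, while $\cI$-measurability gives $\nu(A)\circ\tau_t=\nu(A)$, so $\nu(A)=\nu(A+t)$ a.s. Choosing a countable determining family of intervals, a.s.\ $\nu$ is a translation-invariant Radon measure, hence a Lebesgue multiple $\nu(dx)=Y\,dx$ with $Y:=\nu([0,1])$. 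Since $M$ has a moment of order $1+\delta$ and conditional expectation is an $L^{1+\delta}$-contraction, $Y\ge 0$ and $Y\in L^{1+\delta}$; projecting onto $\sigma(Y)\subset\cI$ then yields the last assertion $\E_Y[M(A)]=\E[\nu(A)\,|\,Y]=Y|A|$.

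Next I would establish the a.s.\ and $L^{1+\delta}$ convergence $\frac1T M([0,T])\to Y$ via the continuous ergodic theorem. For fixed $\epsilon>0$ the density $g_\epsilon(x):=\tfrac1\epsilon M([x,x+\epsilon))=G_\epsilon\circ\tau_x$, with $G_\epsilon:=\tfrac1\epsilon M([0,\epsilon))\in L^{1+\delta}$, is of additive-flow type, so Wiener's pointwise ergodic theorem and the $L^p$ mean ergodic theorem ($p=1+\delta>1$) give $\frac1T\int_0^T g_\epsilon(x)\,dx\to\E[G_\epsilon\,|\,\cI]$ a.s.\ and in $L^{1+\delta}$. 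Crucially, Paragraph~1 forces $\E[G_\epsilon\,|\,\cI]=\tfrac1\epsilon\,\nu([0,\epsilon))=Y$, independent of $\epsilon$. A Fubini computation rewrites $\frac1T\int_0^T g_\epsilon\,dx=\frac1T M([0,T])+\frac1{T\epsilon}R_{T,\epsilon}$, where the boundary remainder satisfies $|R_{T,\epsilon}|\le\epsilon\big(M([0,2\epsilon))+M([T-\epsilon,T+\epsilon))\big)$; the stationary, integrable term $M([T-\epsilon,T+\epsilon))/T\to0$ a.s.\ by Borel--Cantelli (using $\sum_N\P(M([0,2\epsilon))>\eta N)\le\eta^{-1}\E[M([0,2\epsilon))]<\infty$) and in $L^{1+\delta}$ by constancy of the norm. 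This pins the limit of $\frac1T M([0,T])$ to $Y$ in both senses.

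Finally, for a general bounded interval $I=[a,b]$ I would write $M(TI)=M([0,Tb])-M([0,Ta])$, handling negative endpoints by running the same averaging backwards (whose limit is again $Y$, since $\cI$ is two-sided), so $\frac1T M(TI)\to Y(b-a)=Y|I|$. To obtain the weak convergence $\frac1T M(T\,\cdot\,)\to Y\,dx$, I would fix, off a single null set, simultaneous convergence $\frac1T M(TI)\to Y|I|$ for all intervals $I$ with rational endpoints; since the limit $Y\,dx$ is non-atomic, intervals form a convergence-determining class, and approximating any $\varphi\in C_c(\R)$ by step functions gives $\int\varphi\,d\big(\tfrac1T M(T\,\cdot\,)\big)\to Y\int\varphi$, which is the stated weak convergence together with $\E_Y[M(A)]=Y|A|$ from Paragraph~1.

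The main obstacle is the \emph{identification of all the ergodic limits with one and the same random variable} $Y$: it is what turns interval-by-interval limits into a statement about a single intensity, and the cleanest way I see to guarantee it is to derive $\nu(dx)=Y\,dx$ first and then read off every conditional-mean $\E[G_\epsilon\,|\,\cI]=Y$ from it, rather than matching the time-$1$ and full-flow invariant fields by hand. The two remaining technical points are the transfer from the discretised/smoothed average to the genuine rescaling $T\to\infty$, where the boundary overshoot must be controlled a.s.\ (Borel--Cantelli) and in $L^{1+\delta}$ (stationarity of norms), and the use of the mean ergodic theorem for $p>1$ to get $L^{1+\delta}$ rather than merely $L^1$ convergence --- this is exactly where the hypothesis of a moment of order $1+\delta$ enters.
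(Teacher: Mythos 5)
Your argument is correct and is essentially the approach the paper relies on: the paper gives no proof of Lemma \ref{birk} but delegates it to \cite{allez}, where the statement is obtained from the Birkhoff ergodic theorem applied to the stationary blocks $M([n,n+1))$, with $Y=\E[M([0,1))\mid\cI]$ the conditional intensity and the $L^{1+\delta}$ convergence coming from the moment hypothesis exactly as in your final paragraph. Your only real variation is cosmetic: you run the continuous-flow (Wiener) ergodic theorem on the smoothed densities $g_\epsilon$ and identify all the ergodic limits at once through the invariant intensity $\nu=Y\,dx$, rather than summing discrete stationary increments, and both routes control the boundary remainder and pass to general intervals and weak convergence in the same way.
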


Thus, in what follows, the random variable $Y$ will be defined as the
unique (up to a set of probability $0$) random variable such that
$ \mathbb E _Y[M(A)]=Y |A|$ for all Borel sets $A$.

For $x\neq0$, define
%e39 ###
%e40 #&#
\begin{equation}
\label{gencov} S^{\varepsilon}(x) = \sum_{n=0}^{\infty}
F_{\varepsilon
}\bigl(x/\varepsilon^{n}\bigr),
\end{equation}
where $F_{\varepsilon}(\cdot)$ is the generalized covariance function
associated with $\omega_{\varepsilon}$; see Assumption~\ref{good}. The
uniform convergence of the series on the sets of the type $\{x\in
\mathbb{R};|x|\geq\rho\}$ is ensured by (\ref{modulus}); see \cite
{allez}. Then we can reproduce the proofs of \cite{allez}, Section~5.2,
by replacing $K^\epsilon$ by $S^{\varepsilon}$ in the proofs.%and the
%limit $S^\epsilon$ is locally Lipschitz on $\R\setminus\{0\}$

% As a consequence \cite{allez}, we obtain:
%For any Lebesgue integrable function $\phi$ on $\R^2$ and $d>0$, we
%have for all $N\in\N\setminus\{0\}$:
%for some function $\xi:\R_+\to\R_+$ such that $\lim_{d\to\infty}

%s9.2 ###
%s9.2 #&#
\subsection{$M$ is a L\'evy multiplicative chaos}

%%%%%%%%%%%%%%%%%%%%%%%%%%%%%%%%%%%
Let us define the $\sigma$ algebra $\mathcal{F}_{N} =
\sigma(X^{0},\ldots, X^{N},Y)$. For every Borel subset $A
\subset\mathbb{R}$, we define
%
%e40 ###
%e41 #&#
\begin{equation}
\label{gnexp} G_{N}(A) = \mathbb E \bigl[ \widetilde{M}{}^{N}(A)|
\mathcal{F}_{N} \bigr].
\end{equation}
As in \cite{allez}, we prove
%e41 ###
%e42 #&#
\begin{equation}
\forall N \geq0\qquad G_{N}(A) = Y \int_{A}
\exp\Biggl( \sum_{n=0}^{N}
X^{n}_{x/\varepsilon^{n}} \Biggr) \,dx.
\end{equation}
Hence,\vspace*{1pt} for each bounded Borel set $A$, the sequence
$ (G_{N}(A) )_{N}$ is a positive martingale
bounded in $L^{1+\delta}$. Being bounded in $L^{1+\delta}$, the
martingale $G_{N}(A)$ converges toward a random variable $Q(A)$ which
should be formally thought of as
\[
Q(A) = Y \int_{A} \exp\Biggl( \sum
_{n=0}^{\infty} X^{n}_{x/\varepsilon^{n}} \Biggr)
\,dx.
\]

% \end{pf}
%
% \subsection{Key lemma}
%
% The aim of this section is to prove the following key result:

The result below is proved in \cite{allez} and uses specific properties
of Gaussian processes, namely Gaussian concentration inequalities due
to Kahane; see \cite{cfKah}. It turns out that we can carry out the
proof while skipping these inequalities:
%
%le28 #&#
\begin{lemma}\label{Gieq}
For small enough $ \gamma\in\,]0,\delta[$, there exists $\rho> 0$
such that
%e42 ###
%e43 #&#
\begin{equation}
\sup_{n} n^{1 + \rho} \mathbb E \biggl[ M \biggl(
\biggl[0,\frac
{1}{n} \biggr] \biggr)^{1+\gamma} \biggr] < \infty.\vadjust{\goodbreak}
\end{equation}
\end{lemma}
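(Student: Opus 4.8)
The plan is to renormalize the $(1+\gamma)$-moment one scale at a time along the geometric sequence $\epsilon^N$, using the $\star$-scale invariance \eqref{star}, and to show that each step multiplies the moment by a factor whose effective exponent is strictly larger than $1$. The gain beyond the naive linear scaling comes precisely from the subcriticality $\psi'(1)<1$, which the finite $(1+\delta)$-moment guarantees (Theorem \ref{CNSq}). First I would reduce to these scales: since $t\mapsto M([0,t])$ is nondecreasing, it suffices to find $\rho>0$ and $C<\infty$ with
$$b_N:=\E\big[M([0,\epsilon^N])^{1+\gamma}\big]\leq C\,\epsilon^{N(1+2\rho)}.$$
Indeed, choosing $N$ with $\epsilon^{N+1}\leq 1/n\leq \epsilon^N$ then gives $n^{1+\rho}\E[M([0,1/n])^{1+\gamma}]\leq \epsilon^{-(N+1)(1+\rho)}b_N\leq C\,\epsilon^{-(1+\rho)}$, uniformly in $n$.

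Next I would set up the recursion on $b_N$. Applying \eqref{star} at the fixed scale $\epsilon$ and using $M^\epsilon([0,\epsilon^{N+1}])\stackrel{law}{=}\epsilon\,M([0,\epsilon^N])$ with $M^\epsilon$ independent of $\omega_\epsilon$, one has
$$M([0,\epsilon^{N+1}])\stackrel{law}{=}\int_0^{\epsilon^{N+1}}e^{\omega_\epsilon(r)}\,M^\epsilon(dr).$$
Comparing with the constant value $e^{\omega_\epsilon(0)}$ and applying Minkowski's inequality in $L^{1+\gamma}$ gives
$$\|M([0,\epsilon^{N+1}])\|_{1+\gamma}\leq \big(e^{\psi_\epsilon(1+\gamma)/(1+\gamma)}+\delta_N\big)\,\epsilon\,\|M([0,\epsilon^N])\|_{1+\gamma}.$$
Here the main factor comes from independence, $\|e^{\omega_\epsilon(0)}M^\epsilon([0,\epsilon^{N+1}])\|_{1+\gamma}=e^{\psi_\epsilon(1+\gamma)/(1+\gamma)}\,\epsilon\,\|M([0,\epsilon^N])\|_{1+\gamma}$ (using $\E[e^{2\omega_\epsilon(0)}]<\infty$ from point $1$ of Assumption \ref{good} to make sense of the $(1+\gamma)$-moment), while
$$\delta_N:=\Big\|\,\sup_{r\in[0,\epsilon^{N+1}]}\big|e^{\omega_\epsilon(r)}-e^{\omega_\epsilon(0)}\big|\,\Big\|_{1+\gamma}$$
bounds the error after factoring out the independent mass $M^\epsilon([0,\epsilon^{N+1}])$. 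Iterating yields $b_N\leq \big[\prod_{k<N}(e^{\psi_\epsilon(1+\gamma)/(1+\gamma)}+\delta_k)\,\epsilon\big]^{1+\gamma}\,b_0$, with $b_0=\E[M([0,1])^{1+\gamma}]<\infty$.

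It then remains to check the exponent and to kill the error. Since $\psi_\epsilon(1)=0$, one has $\psi_\epsilon(1+\gamma)/\gamma\to\psi_\epsilon'(1)=\ln(1/\epsilon)\,\psi'(1)$ as $\gamma\downarrow0$; as the finite $(1+\delta)$-moment forces $\psi'(1)<1=d$ (Theorem \ref{CNSq}), we get $\psi_\epsilon(1+\gamma)<\gamma\ln(1/\epsilon)$ for all small enough $\gamma$, that is $e^{\psi_\epsilon(1+\gamma)}\epsilon^{1+\gamma}\leq\epsilon^{1+2\rho}$ for some $\rho>0$. Combined with $\delta_N\to0$, all but finitely many factors are $\leq\epsilon^{1+2\rho}$, which gives $b_N\leq C\,\epsilon^{N(1+2\rho)}$ and hence the claim. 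The hard part will be the control of $\delta_N$, namely showing that the spatial fluctuation of $e^{\omega_\epsilon}$ over the shrinking interval $[0,\epsilon^{N+1}]$ is negligible in $L^{1+\gamma}$: this is exactly the place where \cite{allez} uses Gaussian concentration, which is unavailable here. I would instead estimate the increments through the modulus-of-covariance bound \eqref{modulus}--\eqref{cvint} of Assumption \ref{good}, which yields $\E[(\omega_\epsilon(r)-\omega_\epsilon(0))^2]\leq C_\epsilon|F_\epsilon(r)-F_\epsilon(0)|\to0$, together with the second exponential moment $\E[e^{2\omega_\epsilon(0)}]<\infty$ to dominate the supremum and pass to the limit; handling the \emph{supremum} rather than a single increment will likely require a maximal or chaining argument for the ID process $\omega_\epsilon$, and this is the genuine obstacle replacing Kahane's convexity inequalities.
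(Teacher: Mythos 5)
Your overall bookkeeping is sound: reducing to the scales $\gep^N$, extracting a factor $e^{\psi_\gep(1+\gga)}\gep^{1+\gga}\leq \gep^{1+2\rho}$ per step from the subcriticality $\psi_\gep(1+\gga)<\gga\ln\frac{1}{\gep}$ (which is exactly \eqref{fatalkitu2} of Lemma \ref{lemfatal} --- note that Theorem \ref{CNSq}, which you cite, concerns already-constructed chaoses with $g(y)\leq y$ and is not available at this stage of the uniqueness argument), and iterating. The exponent count agrees with the paper's bound $n^{-(1+\gga-\psi_\gep(1+\gga)/\ln\frac{1}{\gep})}$. But your proof hinges entirely on the error term
\begin{equation*}
\gd_N=\Big\|\sup_{r\in[0,\gep^{N+1}]}\big|e^{\go_\gep(r)}-e^{\go_\gep(0)}\big|\Big\|_{1+\gga},
\end{equation*}
and this is a genuine gap, not a technicality you can defer. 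First, you need $\gd_N$ to be \emph{finite}, which already requires a maximal inequality in $L^{1+\gga}$ for the exponential of an ID process over an interval; the hypotheses only give an exponential moment of the one-point marginal, $\E[e^{2\go_\gep(0)}]<\infty$, and $\go_\gep$ has a Poissonian component, so its paths are not continuous and the supremum of $e^{\go_\gep}$ over a set of positive length is not controlled by any assumption in the paper. Second, the tool you invoke to make $\gd_N\to 0$ does not apply: point 2 of Assumption \ref{good} (the bound \eqref{modulus} with \eqref{cvint}) is a \emph{decorrelation-at-infinity} condition on $F_\gep$, not a modulus of continuity at $0$, so it gives no rate for $\E[(\go_\gep(r)-\go_\gep(0))^2]$ as $r\to 0$; stochastic continuity gives only convergence in probability of a single increment, with no uniformity and no moment control. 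So the "hard part" you flag is not merely hard --- it is unreachable with the stated hypotheses.

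The paper avoids this obstacle by a different use of Jensen's inequality, and this is the actual replacement for Kahane's convexity inequality that you were looking for. Writing $M\overset{law}{=}\tilde M^{N-1}$ with weight $e^{\sum_{p<N}X^p(r/\gep^p)}$ against the independent measure $M^{N-1}$, one normalizes by the total mass and applies convexity of $x\mapsto x^{1+\gga}$ to the probability measure $M^{N-1}(dr)/M^{N-1}([0,\frac1n])$:
\begin{equation*}
\E\Big[M\big(\big[0,\tfrac1n\big]\big)^{1+\gga}\Big]\leq \E\Big[\int_0^{1/n}e^{(1+\gga)\sum_{p=0}^{N-1}X^p(r/\gep^p)}\,M^{N-1}(dr)\;M^{N-1}\big(\big[0,\tfrac1n\big]\big)^{\gga}\Big]
\leq e^{N\psi_\gep(1+\gga)}\,\E\Big[M^{N-1}\big(\big[0,\tfrac1n\big]\big)^{1+\gga}\Big],
\end{equation*}
using independence and stationarity so that only the \emph{one-point} Laplace exponent $\psi_\gep(1+\gga)$ ever appears --- no supremum, no increment, no chaining. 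Choosing $\gep^N=\frac1n$ and using the exact scaling $M^{N-1}([0,\frac1n])\overset{law}{=}\frac1n M([0,1])$ then gives the claimed $n^{-(1+\rho)}$ bound directly. If you want to salvage your one-scale-at-a-time recursion, you should replace the comparison of $e^{\go_\gep(r)}$ with $e^{\go_\gep(0)}$ by this normalized-Jensen step at each stage; as written, the control of $\gd_N$ cannot be completed.
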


The central lemma for establishing Lemma \ref{Gieq} is the following:

%le29 #&#
\begin{lemma}\label{lemfatal}
The finiteness of a moment of order $1+\delta$ (\,for some $\delta>0$)
implies
%e43 ###
%e44 #&#
\begin{equation}
\label{fatalkitu} \forall\epsilon<1\qquad\psi_\epsilon'(1)<\ln
\frac{1}{\epsilon}
\end{equation}
and
%e44 ###
%e45 #&#
\begin{equation}
\label{fatalkitu2} \forall\gamma\in[0,\delta[,\ \forall\epsilon<1\qquad
\psi
_\epsilon(1+\gamma) <\gamma\ln\frac{1}{\epsilon}.
\end{equation}
\end{lemma}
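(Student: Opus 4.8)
The plan is to derive both assertions from a single recursive estimate on the small-interval moments of $M$, using the superadditivity of $x\mapsto x^{1+\gamma}$ together with the exact self-similarity in \eqref{star}. I work in dimension $d=1$ as in the rest of the proof. Fix an auxiliary scale $\eta\in]0,1[$ and set $b_k:=\E[M([0,\eta^k])^{1+\gamma}]$ for $\gamma\in]0,\delta[$. These are finite, and the statement presupposes $\psi_\eta(1+\gamma)<\infty$, which holds because a moment of order $1+\gamma$ for $M$ forces an exponential moment of order $1+\gamma$ for $\omega_\eta$ (the necessity principle noted after Assumption \ref{good}); recall also $\psi_\eta(1)=0$ by \eqref{norm} and $b_k\in]0,\infty[$ since $b_k\geq(\E[M([0,\eta^k])])^{1+\gamma}>0$.

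First I would establish two opposite bounds on the ratio $b_{k+1}/b_k$. For the upper bound, partitioning $[0,\eta^k]$ into $\lfloor\eta^{-1}\rfloor$ disjoint intervals of length $\eta^{k+1}$ and using $(\sum_j u_j)^{1+\gamma}\geq\sum_j u_j^{1+\gamma}$ (valid for $u_j\geq0$, $1+\gamma\geq1$) with stationarity gives $b_k\geq\lfloor\eta^{-1}\rfloor\,b_{k+1}$. For the lower bound, I apply \eqref{star} at scale $\eta$ and the scaling relation $M^\eta(\eta A)\stackrel{law}{=}\eta M(A)$ with $A=[0,\eta^k]$, so that $M([0,\eta^{k+1}])\stackrel{law}{=}\int_0^{\eta^{k+1}}e^{\omega_\eta(r)}M^\eta(dr)$, where $M^\eta$ restricted to $[0,\eta^{k+1}]$ is an independent $\eta$-rescaled copy of $M$ on $[0,\eta^k]$. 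Bounding $\omega_\eta(r)\geq\omega_\eta(0)-S_k$ with $S_k:=\sup_{r\in[0,\eta^{k+1}]}|\omega_\eta(r)-\omega_\eta(0)|$ and using the independence of $\omega_\eta$ and $M^\eta$ yields $b_{k+1}\geq\eta^{1+\gamma}c_k\,b_k$, where $c_k:=\E[e^{(1+\gamma)(\omega_\eta(0)-S_k)}]$. Combining the two bounds gives, for every $k$, $\eta^{1+\gamma}c_k\leq 1/\lfloor\eta^{-1}\rfloor$.

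As $k\to\infty$ the interval $[0,\eta^{k+1}]$ shrinks, so $S_k\to0$ and, by dominated convergence against the integrable $e^{(1+\gamma)\omega_\eta(0)}$, $c_k\to e^{\psi_\eta(1+\gamma)}$; hence $\eta^{1+\gamma}e^{\psi_\eta(1+\gamma)}\leq 1/\lfloor\eta^{-1}\rfloor$. To remove the rounding loss I take $\eta=\epsilon^p$: the cascade identity \eqref{eq:cascade} gives $\psi_{\epsilon^p}=p\,\psi_\epsilon$, so this reads $p\psi_\epsilon(1+\gamma)\leq p\gamma\ln\tfrac1\epsilon+\ln\tfrac1{1-\epsilon^p}$, and dividing by $p$ and letting $p\to\infty$ yields \eqref{fatalkitu2} with $\leq$. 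Strictness follows from convexity: when $\omega_\epsilon$ is non-degenerate, $\psi_\epsilon$ is strictly convex (the case $\omega_\epsilon\equiv0$ makes both claims trivial), and a strictly convex function that vanishes at $\gamma=0$ and stays $\leq\gamma\ln\tfrac1\epsilon$ on all of $]0,\delta[$ cannot meet this affine bound at an interior point, since convexity would then force it to be affine on a subinterval of $]0,\delta[$. Finally, \eqref{fatalkitu} is immediate from \eqref{fatalkitu2}: the difference quotient $\gamma\mapsto\psi_\epsilon(1+\gamma)/\gamma$ is nondecreasing by convexity, so $\psi_\epsilon'(1)=\inf_{\gamma>0}\psi_\epsilon(1+\gamma)/\gamma<\psi_\epsilon(1+\gamma)/\gamma\leq\ln\tfrac1\epsilon$ for any fixed $\gamma\in]0,\delta[$.

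The delicate point, and the main obstacle, is the convergence $c_k\to e^{\psi_\eta(1+\gamma)}$ in the lower bound, i.e. showing that the oscillation $S_k$ of the infinitely divisible field $\omega_\eta$ over the shrinking interval tends to $0$ with the uniform integrability needed to pass to the limit. Pointwise stochastic continuity does not by itself control a supremum over an interval, and this is precisely where the Gaussian concentration inequalities used in \cite{allez} are unavailable; instead one must exploit the regularity and the order-$2$ exponential integrability encoded in Assumption \ref{good} to bound this oscillation. This is the technical difficulty specific to the non-Gaussian setting.
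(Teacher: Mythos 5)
Your overall skeleton (superadditivity of $x\mapsto x^{1+\gamma}$ on disjoint subintervals, a matching lower bound obtained from the cascade, then convexity/strict convexity of $\psi_\epsilon$ to upgrade the resulting inequality to \eqref{fatalkitu}) is the same as the paper's. But your lower bound $b_{k+1}\geq \eta^{1+\gamma}c_k b_k$ rests on the pointwise estimate $\omega_\eta(r)\geq \omega_\eta(0)-S_k$ with $S_k=\sup_{r\in[0,\eta^{k+1}]}|\omega_\eta(r)-\omega_\eta(0)|$, and on the claim that $c_k=\E[e^{(1+\gamma)(\omega_\eta(0)-S_k)}]\to e^{\psi_\eta(1+\gamma)}$. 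This is a genuine gap, not a deferred technicality. The process $\omega_\eta$ is only assumed stationary, ID and stochastically continuous, so only a measurable version is guaranteed; for such fields the (essential) supremum of $|\omega_\eta(\cdot)-\omega_\eta(0)|$ over a shrinking interval need not tend to $0$ in probability, and there are stationary stochastically continuous ID processes that are a.s. unbounded (below) on every interval. In that case $S_k=+\infty$ a.s., $c_k=0$ for every $k$, your lower bound degenerates to $b_{k+1}\geq 0$, and the inequality $\eta^{1+\gamma}c_k\leq 1/\lfloor\eta^{-1}\rfloor$ carries no information. You correctly identify this as the place where the Gaussian concentration of \cite{allez} is missing, but Assumption \ref{good} does not supply any modulus-of-continuity or supremum control that would let you close it: points 2 and 3 concern decorrelation at infinity and differentiability in $\epsilon$, not path regularity.

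The paper's proof is designed precisely to avoid any supremum of the field. Writing $M([0,1/n])^{1+\delta}=\big(M([0,1/n])^{2}\big)^{1/h}$ with $(1+\delta)h=2$, expanding the square as a double integral against $M^{N-1}(dr)M^{N-1}(du)$ over the $N$-fold cascade, and applying Jensen for the concave map $x\mapsto x^{1/h}$ with respect to the normalized measure $M^{N-1}\otimes M^{N-1}$, one is reduced to the two-point Laplace exponents $F_\epsilon(\tfrac1h,r)=\ln\E[e^{\frac1h(\omega_\epsilon(r)+\omega_\epsilon(0))}]$ evaluated at $|r|\leq\alpha$. The only analytic input then needed is the continuity of $r\mapsto F_\epsilon(\tfrac1h,r)$ at $r=0$, which does follow from stochastic continuity together with the exponential moments of order $2$ (uniform integrability via Cauchy--Schwarz). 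If you want to keep your one-scale recursion, you must replace the bound $\omega_\eta(r)\geq\omega_\eta(0)-S_k$ by this Jensen step (or an equivalent reduction to finite-dimensional Laplace exponents); as written, the argument does not go through.
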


\begin{pf}
Let us fix $\epsilon<1$ and define for $q\leq1$,
\[
F_\epsilon(q,r)=\ln\mathbb E \bigl[e^{q\omega_\epsilon(r)+q\omega
_\epsilon(0)}\bigr].
\]
Let us consider $h>1$ such that $(1+\delta)h=2$. By concavity of the
function $x \mapsto x^{1/h}$, we can make use of Jensen's inequality to
get for $N\geq1$,
\begin{eqnarray*}
&& \mathbb E \biggl[ M \biggl(\biggl[0,\frac{1}{n}\biggr]
\biggr)^{1+\delta} \biggr]
\\
&&\qquad  = \mathbb E \biggl[ M \biggl(\biggl[0,
\frac{1}{n}\biggr] \biggr)^{2\times(1/h)} \biggr]
\\
&&\qquad = \mathbb E \biggl[ \biggl( \int_{0}^{1/n} \int
_{0}^{1/n}e^{\sum
_{p=0}^{N-1}X^p(r/\epsilon^p)+X^p(u/\epsilon^p)} M^{N-1}(dr)
M^{N-1}(du) \biggr)^{1/h} \biggr]
\\
&&\qquad \geq \mathbb E \biggl[ \int_{0}^{1/n} \int
_{0}^{1/n}e^{(1/h)\sum_{p=0}^{N-1}X^p(r/\epsilon^p)+X^p(u/\epsilon^p)} M^{N-1}(dr)
M^{N-1}(du) M^{N-1}
\\
&&\hspace*{267pt}{}\times \biggl(\biggl[0,\frac
{1}{n}\biggr]\biggr)^{2/h-2} \biggr]
\\
&&\qquad \geq e^{N\inf_{|r|\leq 1/(n\epsilon^N)} F_\epsilon(1/h,r)} \mathbb E \biggl[ M^{N-1}\biggl(\biggl[0,
\frac{1}{n}\biggr]\biggr)^{1+\delta} \biggr],
\end{eqnarray*}
where we made use of the fact that the sequence $(X^{n})_{n\leq N-1}$
is independent of the random measure $M^{N-1}$. Now we choose $N$ such
that $\epsilon^N=\frac{1}{\alpha n}$ for some $\alpha>0$, that is,
$N=\frac{\ln\alpha+\ln n}{\ln(1/\epsilon)} $. We obtain
\begin{eqnarray*}
&& \mathbb E \biggl[ M \biggl( \biggl[0,\frac{1}{n} \biggr]
\biggr)^{1+\delta
} \biggr]
\\
&&\qquad \geq\frac{1}{n^{1+\delta}}e^{((\ln n +\ln\alpha)
(\inf_{|r|\leq\alpha} F_\epsilon(1/h,r))/\ln(1/\epsilon))}
\frac{1}{\alpha^{1+\delta}} \mathbb E \bigl[ M \bigl([0,\alpha]\bigr
)^{1+\delta} \bigr].
\end{eqnarray*}
Now, we use the super-additivity of the function $x\mapsto
x^{1+\delta}$ to obtain
\begin{eqnarray*}
\mathbb E \bigl[ M \bigl( [0,1 ] \bigr)^{1+\delta} \bigr] &\geq&\sum
_{k=1}^n \mathbb E \biggl[ M \biggl(
\biggl[\frac{k-1}{n},\frac
{k}{n} \biggr] \biggr)^{1+\delta}
\biggr].
\end{eqnarray*}
By gathering the above inequalities, we deduce
\begin{eqnarray*}
&& \mathbb E \bigl[ M \bigl( [0,1 ] \bigr)^{1+\delta} \bigr]
\\
&&\qquad \geq n
\frac{1}{n^{1+\delta}}e^{(\ln n +\ln\alpha) ((\inf_{|r|\leq
\alpha} F_\epsilon(1/h,r))/\ln(1/\epsilon))}\frac
{1}{\alpha^{1+\delta}} \mathbb E \bigl[ M \bigl([0,
\alpha]\bigr)^{1+\delta} \bigr].
\end{eqnarray*}
Because the left-hand side is bounded independently of $n$, we
necessarily have
%e45 ###
%e46 #&#
\begin{equation}
\forall\epsilon>0\qquad\frac{\inf_{|r|\leq\alpha} F_\epsilon
(1/h,r)}{\ln(1/\epsilon)}\leq\delta.
\end{equation}
By letting $\alpha$ go to $0$ and by continuity of
$F_\epsilon(\frac{1}{h},\cdot)$ at $0$ ($\omega_\epsilon$ is
stochastically continuous with a moment of order $1+\delta$), we
deduce
%e46 ###
%e47 #&#
\begin{equation}
\label{semifatal} \forall\epsilon>0\qquad\frac{ \psi_\epsilon(1+\delta
)}{\ln(1/\epsilon)}\leq\delta.
\end{equation}
By convexity arguments, it is then plain to deduce that
%e47 ###
%e48 #&#
\begin{equation}
\label{fatal} \frac{ \psi_\epsilon'(1)}{\ln(1/\epsilon)}<1.
\end{equation}
Indeed, the (not strict) inequality results from (\ref{semifatal}). If
equality holds, this means that $\psi_\epsilon(1+\gamma)=1$ for all
$\gamma\in[0,\delta[$. By analycity arguments, this implies that the
law of the process $\omega_\epsilon$ is that of a constant, and the
measure $M$ is thus trivial. This is in contradiction to our
assumptions. The same type of argument leads to (\ref{fatalkitu2}).
\end{pf}

\begin{pf*}{Proof of Lemma \ref{Gieq}}
We consider $\gamma\in\,]0,\delta[$. As the function $x \mapsto x^{1+\gamma}$ is convex, we
make use of Jensen's inequality to get for $N\geq1$,
\begin{eqnarray*}
&& \mathbb E \biggl[ M \biggl( \biggl[0,\frac{1}{n} \biggr]\biggr)^{1+\gamma
} \biggr]
\\
&&\qquad  = \mathbb E \biggl[ \biggl( \int_{0}^{1/n} \frac{e^{\sum
_{p=0}^{N-1}X^p(r/\epsilon^p)}}{M^{N-1}([0,1/n])} M^{N-1}(dr)
\biggr)^{1+\gamma} M^{N-1} \biggl( \biggl[0,\frac{1}{n}
\biggr] \biggr)^{1+\gamma} \biggr]
\\
&&\qquad \leq \mathbb E \biggl[ \int_{0}^{1/n}e^{(1+\gamma)\sum
_{p=0}^{N-1}X^p(r/\epsilon^p)} M^{N-1}(dr) M^{N-1} \biggl(
\biggl[0,\frac{1}{n} \biggr] \biggr)^{\gamma} \biggr]
\\
&&\qquad \leq \mathbb E \bigl[ e^{(1+\gamma)\sum_{p=0}^{N-1}X^p(0)} \bigr]
\mathbb E \biggl[
M^{N-1} \biggl( \biggl[0,\frac{1}{n} \biggr]
\biggr)^{1+\gamma} \biggr]
\\
&&\qquad = e^{N\psi_\epsilon(1+\gamma)} \mathbb E \biggl[ M^{N-1} \biggl( \biggl[0,
\frac{1}{n} \biggr] \biggr)^{1+\gamma} \biggr],
\end{eqnarray*}
where, once again, we made use of the fact that the sequence
$(X^{n})_{n\leq N-1}$ is independent of the random measure $M^{N-
1}$. We choose $N=\frac{-\ln n}{\ln\epsilon}$ in order to have
$\epsilon^N=\frac{1}{n}$. We get that
\begin{eqnarray*}
\mathbb E \biggl[ M \biggl( \biggl[0,\frac{1}{n} \biggr]
\biggr)^{1+\gamma
} \biggr] & \leq&\frac{1}{n^{1+\gamma-((\psi_\epsilon(1+\gamma))/\ln(1/\epsilon))}} \mathbb E \bigl[ M \bigl(
[0,1 ] \bigr)^{1+\delta} \bigr].
\end{eqnarray*}
We are thus left with checking that $\frac{\psi_\epsilon'(1)}{\ln(1/\epsilon)} < 1$. This
the content of Lemma~\ref{lemfatal}.
\end{pf*}

Let us stress that, as an immediate consequence of Lemma \ref{Gieq},
the measure $M$ does not possess any atom; see \cite{daley},
Corollary~9.3VI. With the above estimation on
the function $\psi_\epsilon$, we can prove that $Q$ is a nontrivial
L\'evy multiplicative chaos:
%
%le30 #&#
\begin{lemma}\label{trivQ}
The random measure $Q$ is a L\'evy multiplicative chaos, and it is
nonrivial.
\end{lemma}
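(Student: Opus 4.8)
The plan is to treat the two factors in $Q(A)=Y\int_A\exp\big(\sum_{n\geq 0}X^n_{x/\epsilon^n}\big)\,dx$ separately: first to recognise $\bar Q(A):=\lim_N\int_A\exp\big(\sum_{n=0}^N X^n_{x/\epsilon^n}\big)\,dx$ — the limit of the $L^{1+\delta}$-bounded martingale stripped of the $\cF_N$-measurable factor $Y$ — as a L\'evy multiplicative chaos with rate function $g(y)=y$, and then to deduce non-triviality from Lemma \ref{lemfatal}. Since $Y\geq 0$ is the $\cF_N$-measurable ergodic-average factor, independent of the i.i.d.\ family $(X^n)$ of copies of $\omega_\epsilon$, it suffices to exhibit a generator $X$ whose associated approximating family $(X^\delta)_\delta$ (built with $g(y)=y$ as in \eqref{defX}) satisfies $X^\delta\stackrel{law}{=}\omega_\delta$ for every $\delta\in]0,1[$; the chaos $\bar Q$ attached to $X$ then coincides in law with the above martingale limit, and $Q=Y\bar Q$.

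To produce the generator I would invert the scaling relation recorded in the Remark following Theorem \ref{th:uniq}. Point 3 of Assumption \ref{good} guarantees that $\epsilon\mapsto\eta^\epsilon$ is differentiable, and the identity $\partial_\epsilon\eta^\epsilon(q,t)=-\epsilon^{-2}\eta(q,t/\epsilon)$ defines the candidate L\'evy exponents $\eta$ of $X$; the Laplace counterpart yields $\psi_\epsilon(q)=\ln\tfrac1\epsilon\,\psi(q)$ and $F_\epsilon(x)=\int_1^{1/\epsilon}F(yx)\tfrac{dy}{y}$. Summing the resulting geometric tiling of $[1,\infty[$ gives
$$S^\epsilon(x)=\sum_{n\geq 0}F_\epsilon(x/\epsilon^n)=\int_{|x|}^{\infty}F\big(u\tfrac{x}{|x|}\big)\frac{du}{u},$$
which is the two-point density of $\bar Q$; by \eqref{equiv} it behaves like $F(0)\ln\tfrac1{|x|}$ and is therefore logarithmic. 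Assumption \ref{vitg} for $X$ (equivalently \eqref{sphere}) is then obtained from Point 2 of Assumption \ref{good}: the decorrelation bound \eqref{modulus} together with the integrability \eqref{cvint} forces $\sup_{|e|=1}\int_1^\infty|F(ue)|\tfrac{du}{u}<\infty$.

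Once $\bar Q$ is identified as the chaos with generator $X$ and rate $g(y)=y$, the martingale matching is a bookkeeping step: iterating the cascade identity \eqref{eq:cascade} gives $\sum_{n=0}^N X^n_{x/\epsilon^n}\stackrel{law}{=}\omega_{\epsilon^{N+1}}(x)\stackrel{law}{=}X^{\epsilon^{N+1}}_x$, so the approximating martingale of $\bar Q$ along the scales $\delta=\epsilon^{N+1}$ is exactly $\int_A\exp\big(\sum_{n=0}^N X^n_{x/\epsilon^n}\big)\,dx$ (recall $\psi(1)=0$, so no normalising factor survives), and both converge a.s.\ and in $L^{1+\delta}$ to the same limit.

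For non-triviality I would feed Lemma \ref{lemfatal} into Theorem \ref{AMont}. From $\psi_\epsilon(q)=\ln\tfrac1\epsilon\,\psi(q)$ and $\psi_\epsilon(1)=0$ we read off $\psi(1)=0$, while \eqref{fatalkitu}, $\psi_\epsilon'(1)<\ln\tfrac1\epsilon$, becomes $\psi'(1)<1=d$, i.e.\ $\psi'(1)-\psi(1)<d$. Since Assumption \ref{vitg} has been verified, Theorem \ref{AMont} shows $\bar Q$ is non-degenerate; as $Y$ is independent of $\bar Q$ with $\E[Y]=\E[M([0,1])]>0$, the product $Q=Y\bar Q$ is non-trivial. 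The principal obstacle lies in the second paragraph: one must certify that the formally differentiated exponents $\eta$ really are the L\'evy--Khintchine exponents of a bona fide stationary stochastically continuous ID process admitting a Maruyama representation \eqref{version} with a measure-preserving flow, so that the construction of Section \ref{genchaos} applies. This reconstruction is precisely where the argument departs from \cite{allez} — in the Gaussian case the generator is recovered from a single covariance function and convexity tools are available — and it is here that the regularity Assumption \ref{good}(3) must carry the full weight.
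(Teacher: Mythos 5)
Your plan front-loads the hardest part of the whole uniqueness proof into this lemma, and the step you yourself flag as ``the principal obstacle'' is a genuine, unfilled gap. Lemma \ref{trivQ} does \emph{not} require identifying a generator with rate function $g(y)=y$: the paper's proof takes the Maruyama representation \eqref{version} of $\omega_\epsilon$ itself (parameters $(S_\epsilon,W_\epsilon,W'_\epsilon,N_\epsilon,\theta_\epsilon,R_\epsilon,f_\epsilon,(T^\epsilon_x)_x)$, rescaled by $\ln\frac{1}{\epsilon}$), computes the L\'evy exponent of $Y^N_t=\sum_{n=0}^N X^n(t/\epsilon^n)$, and observes that it is exactly of the form \eqref{version2} with the \emph{piecewise-constant} rate function $g(y)=\epsilon^{-n}$ on $[\epsilon^{-n},\epsilon^{-n-1}[$. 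That already makes $Q$ a L\'evy multiplicative chaos in the sense of Section \ref{genchaos}; no differentiation of $\epsilon\mapsto\eta^\epsilon$, no tightness argument, and no verification that the limiting exponent $H$ is a bona fide L\'evy--Khintchine exponent are needed here. The reconstruction of the generator with $g(y)=y$ is carried out only afterwards, in the ``Structure of the L\'evy chaos'' subsection, and it is there (not here) that Assumption \ref{good}(3) does its work. By routing the lemma through that reconstruction you have made the statement depend on an argument you do not supply.

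Two further concrete problems. First, your verification of Assumption \ref{vitg} (equivalently \eqref{sphere}) from Assumption \ref{good}(2) does not go through as written: \eqref{modulus} bounds $|F_\epsilon(x)|=\big|\int_1^{1/\epsilon}F(yx)\frac{dy}{y}\big|$, i.e.\ the absolute value of an integral of $F$, whereas \eqref{sphere} requires control of $\int_1^\infty\frac{|F(ue)|}{u}\,du$ with the absolute value \emph{inside}; sign cancellations are not excluded by \eqref{modulus} and \eqref{cvint}. Second, the detour through Theorem \ref{AMont} for non-triviality is both heavier than necessary and dependent on the unverified points above. The paper's argument is one line: by \eqref{gnexp} and Jensen, $\E[G_N(A)^{1+\delta}]\leq\E[\tilde M^N(A)^{1+\delta}]=\E[M(A)^{1+\delta}]<\infty$, so the positive martingale $(G_N(A))_N$ is bounded in $L^{1+\delta}$, hence uniformly integrable, hence converges in $L^{1+\delta}$ to $Q(A)$ with $\E[Q(A)]=\E[M(A)]=|A|>0$. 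Your cascade identity $\sum_{n=0}^N X^n_{x/\epsilon^n}\stackrel{law}{=}\omega_{\epsilon^{N+1}}(x)$ from \eqref{eq:cascade} is correct and consistent with the paper, but the surrounding scaffolding needs to be replaced by the direct argument above.
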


\begin{pf} Let us use the decomposition of $X^n$ to write
\begin{eqnarray*}
X^n(r)&=& b_\epsilon+\int_{\mathbb{R}^d}\cos(t\cdot
u) W_\epsilon^n(du)+\int_{\mathbb{R}^d}\sin(t\cdot
u) W_\epsilon^{n'}(du)
\\
&&{} +\int_{S_\epsilon}f_\epsilon
\bigl(T^\epsilon_t(s)\bigr) \bigl[N_\epsilon^n(ds)-
\bigl(1\vee\bigl|f_\epsilon\bigl(T^\epsilon_t(s)\bigr)\bigr|
\bigr)^{-1}\theta_\epsilon(ds)\bigr],
\end{eqnarray*}
where the triples $(W_\epsilon^n,W_\epsilon^{n'},N_\epsilon^n)_n$
are independent. Thus we have
\begin{eqnarray*}
Y^N_t&=&\sum_{n=0}^NX^n
\biggl(\frac{t}{\epsilon^n}\biggr)
\\
&=&Nb_\epsilon+\sum_{n=0}^N\int
_{\mathbb{R}^d}\cos\biggl(\frac
{t}{\epsilon^n}\cdot u\biggr)
W_\epsilon^n(du)+\sum_{n=0}^N
\int_{\mathbb
{R}^d}\sin\biggl(\frac{t}{\epsilon^n}\cdot u\biggr)
W_\epsilon^{n'}(du)
\\
&&{}+\sum_{n=0}^N\int_{S_\epsilon}f_\epsilon
\bigl(T^\epsilon_{t/\epsilon^n}(s)\bigr)
\bigl[N_\epsilon^n(ds)-\bigl(1\vee\bigl|f_\epsilon
\bigl(T^\epsilon_{t/\epsilon^n}(s)\bigr)\bigr|
\bigr)^{-1}\theta_\epsilon(ds)\bigr].
\end{eqnarray*}

Let us compute the L\'evy exponent of $Y^N$. For $r_1,\ldots,r_p\in
\mathbb{R}$ and $\lambda_1,\ldots,\break  \lambda_N\in\mathbb{R}$ such that the
following expectations make sense, we have
\begin{eqnarray*}
\hspace*{-5pt}&& \mathbb E \bigl[e^{\sum_{i=1}^p\lambda_iY^N_{r_i}}\bigr]
\\
\hspace*{-5pt}&&\quad = \exp\Biggl(Nb_\epsilon+
\frac{1}{2}\sum_{n=0}^N\int
_{\mathbb{R}^d} \Biggl(\sum_{i=1}^p
\lambda_i\cos\biggl(\frac{r_i}{\epsilon^n}\cdot u\biggr)
\Biggr)^2R_\epsilon(du)
\\
\hspace*{-5pt}&&\hspace*{21pt}\qquad{}+\frac{1}{2}\sum
_{n=0}^N\int_{\mathbb{R}^d} \Biggl(\sum
_{i=1}^p\lambda_i\sin\biggl(
\frac{r_i}{\epsilon^n}\cdot u\biggr) \Biggr)^2R_\epsilon(du)
\\
\hspace*{-5pt}&&\hspace*{21pt}\qquad{}+\sum_{n=0}^N\int_{S_\epsilon}
\Biggl(e^{\sum_{i=1}^p\lambda_i
f_\epsilon(T^\epsilon_{r_i/\epsilon^n}(s))}-1-\sum_{i=1}^p
\frac{\lambda_if_\epsilon(T^\epsilon_{r_i/\epsilon
^n}(s))}{1\vee|f_\epsilon(T^\epsilon_{r_i/\epsilon
^n}(s))|} \Biggr)\theta_\epsilon(ds) \Biggr).
\end{eqnarray*}
We point out that the last quantity can be rewritten as
\begin{eqnarray*}
\hspace*{-6pt}&& \exp\Biggl(\int_1^{1/\epsilon^N}\frac{b_\epsilon}{\ln(1/\epsilon)}
\frac{dy}{y}+ \frac{1}{2}\int_1^{1/\epsilon^N}
\!\!\!\int_{\mathbb{R}^d} \Biggl(\sum_{i=1}^p
\lambda_i\cos\bigl(r_ig(y)\cdot u\bigr)
\Biggr)^2\frac{R_\epsilon(du)}{\ln(1/\epsilon)}\frac{dy}{y}
\\
\hspace*{-6pt}&&\hspace*{12pt}{}+ \frac{1}{2}\int_1^{1/\epsilon^N}\!\!\! \int
_{\mathbb
{R}^d} \Biggl(\sum_{i=1}^p
\lambda_i\sin\bigl(r_ig(y)\cdot u\bigr)
\Biggr)^2\frac
{R_\epsilon(du)}{\ln(1/\epsilon)}\frac{dy}{y}
\\
\hspace*{-6pt}&&\hspace*{12pt}{}+\int_1^{1/\epsilon^N}\!\!\!\int_{S_\epsilon}
\Biggl(e^{\sum
_{i=1}^p\lambda_i f_\epsilon(T^\epsilon_{r_ig(y)} (s))}-1-\sum_{i=1}^p
\frac{\lambda_if_\epsilon(T^\epsilon_{r_ig(y)}(s))}{1\vee
|f_\epsilon(T^\epsilon_{r_ig(y)}(s))|} \Biggr)\frac{\theta_\epsilon
(ds)}{\ln(1/\epsilon)}\frac{dy}{y} \Biggr),
\end{eqnarray*}
where $g$ is defined by $g(y)=\frac{1}{\epsilon^n}$ on the interval
$[\frac{1}{\epsilon^n},\frac{1}{\epsilon^{n+1}}[$. Hence, $Q$ is
obviously a L\'evy multiplicative chaos. Furthermore, from relation
(\ref{gnexp}), it is plain to deduce that the martingale $(G_N(A))_N$
is bounded in $L^{1+\delta}$ as $(\widetilde{M}{}^N)_N$ is. Thus, the
martingale $(G_N(A))_N$ converges a.s. and in $L^{1+\delta}$ toward its
limit $Q(A)$, which is necessarily nontrivial.
\end{pf}

Once we have proved Lemmas \ref{Gieq} and \ref{trivQ}, we can proceed
along the same lines as in \cite{allez}, Section~5, to have the
following description of the set of good $\star$-scale invariant random
measures:
%
%pr31 #&#
\begin{proposition}\label{Mchao}
$\!\!\!$The random measures $(Q(A))_{A \in\mathcal{B}(\mathbb{R})}$ and $(M(A))_{A
\in\mathcal{B}(\mathbb{R})}$ have the same law.
\end{proposition}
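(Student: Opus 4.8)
The plan is to exploit the two facts already in place. First, iterating \eqref{star} shows that $\tilde M^N \stackrel{law}{=} M$ as random measures for every $N$. Second, the conditional expectations $G_N(A) = \E[\tilde M^N(A)\mid \cF_N] = Y\int_A \exp(\sum_{n=0}^N X^n_{x/\gep^n})\,dx$ form an $L^{1+\gd}$-bounded positive martingale converging almost surely and in $L^{1+\gd}$ to $Q(A)$. Since $Q(A)$ is precisely the almost sure limit of $G_N(A)$, it suffices to show that $\tilde M^N$ and $G_N$ become asymptotically equal; concretely, I would prove that for every bounded Borel set $A$, and jointly for every finite family $A_1,\dots,A_k$,
\[
\E\big[\,\big|\tilde M^N(A) - G_N(A)\big|\,\big] \xrightarrow[N\to\infty]{} 0 .
\]
Granting this, the elementary bound $|e^{-a}-e^{-b}|\le |a-b|$ for $a,b\ge 0$ forces the joint Laplace transforms $\E[\exp(-\sum_j \theta_j \tilde M^N(A_j))]$ and $\E[\exp(-\sum_j\theta_j G_N(A_j))]$ to have the same limit. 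The former is constant in $N$ and equals $\E[\exp(-\sum_j\theta_j M(A_j))]$ because $\tilde M^N\stackrel{law}{=}M$, while the latter converges to $\E[\exp(-\sum_j\theta_j Q(A_j))]$ by dominated convergence. Equality of all joint Laplace transforms then yields $M\stackrel{law}{=}Q$.

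The heart of the matter, and the step I expect to be the main obstacle, is the $L^1$ estimate above. Conditionally on $\cF_N = \gs(X^0,\dots,X^N,Y)$ the weight $W_N(r):=\exp(\sum_{n=0}^N X^n_{r/\gep^n})$ and the variable $Y$ are frozen, the rescaled base measure $M^N$ is independent of $(X^n)_n$, and $\E[M^N(dr)\mid \cF_N]=Y\,dr$; hence
\[
\tilde M^N(A)-G_N(A)=\int_A W_N(r)\,\big(M^N(dr)-Y\,dr\big)
\]
is $\cF_N$-centered. Two competing effects must be balanced: as $N\to\infty$ the base measure equilibrates, $M^N\to Y\,|\cdot|$ weakly (Lemma \ref{birk}), while the weight $W_N$ grows increasingly singular as the chaos builds up, so one cannot simply pass to the limit in one factor at a time. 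I would decouple them by splitting scales, writing $W_N=W_K\,\widetilde W_{K,N}$ with $W_K$ carrying the $K$ coarsest scales and $\widetilde W_{K,N}$ the finer ones: the coarse factor is handled by the $L^{1+\gd}$ convergence of $G_N$, and the fine factor, coupled to $M^N-Y\,|\cdot|$, is controlled by the uniform small-scale bound $\sup_n n^{1+\rho}\E[M([0,1/n])^{1+\gga}]<\infty$ of Lemma \ref{Gieq}. In the lognormal setting of \cite{allez} this balancing rests on Kahane's convexity and concentration inequalities, which are unavailable here; Lemmas \ref{Gieq} and \ref{trivQ}, together with the sub-logarithmic decorrelation encoded in Assumption \ref{good} through the kernel $S^\gep$ of \eqref{gencov}, are precisely the substitutes that let the scheme of \cite[Section 5]{allez} run once these inputs are supplied.

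Finally, non-degeneracy of $Q$ (Lemma \ref{trivQ}) guarantees the limit law is not concentrated at $0$, and the joint version of the $L^1$ estimate upgrades the one-set identity to equality of all finite-dimensional distributions of $(\tilde M^N(A))_A$ and $(Q(A))_A$, hence of the random measures themselves. For the portions of the argument that are insensitive to the Gaussian versus general ID distinction—the identification $G_N=\E[\tilde M^N\mid\cF_N]$, the martingale convergence, and the passage from finite-dimensional laws to the law of the measure—I would follow \cite[Section 5]{allez} directly.
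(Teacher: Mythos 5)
Your proposal follows essentially the same route as the paper: the paper's entire proof of Proposition \ref{Mchao} is the remark that, once Lemmas \ref{Gieq} and \ref{trivQ} are in place as substitutes for Kahane's Gaussian inequalities, one proceeds exactly as in \cite[Section 5]{allez}, and your outline (the identity $\tilde M^N\stackrel{law}{=}M$, the $L^{1+\gd}$-bounded martingale $G_N=\E[\tilde M^N\mid\cF_N]\to Q$, the $L^1$ comparison of $\tilde M^N$ with $G_N$ via conditional centering and the small-scale moment bound, and the passage through joint Laplace transforms) is precisely that scheme with the same two lemmas identified as the key new inputs. The only difference is one of presentation: you sketch the scale-splitting argument that the paper leaves entirely to the citation, so there is nothing in your approach that diverges from, or falls short of, what the paper itself provides.
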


%%%%%%%%%%%%%%%%%%%%%%%%%%%%%%%%%%%%%%%%
%s9.3 ###
%s9.3 #&#
\subsection{Structure of the L\'evy chaos}
%%%%%%%%%%%%%%%%%%%%%%%%%%%%%%%%%%%%%%%%
Now, we still have to show that the chaos $M$ can be recovered in the
same way as the construction set out in Section~\ref{secstar}. For
$(t_1,\ldots,t_p)\in\mathbb{R}^p$, we introduce the L\'evy exponent
$\eta^\epsilon$ of the random variable $(\omega
_{\epsilon}(t_1),\ldots,\omega_{\epsilon}( t_p))$, namely
\[
\mathbb E \bigl[e^{iq_1\omega_{\epsilon}( t_1)+\cdots+iq_p\omega
_{\epsilon}( t_p)}\bigr]=e^{\eta^\epsilon(t_1,\ldots,t_p,Q) },
\]
where\vspace*{-1pt} $Q$ stands for the vector $(q_1,\ldots,q_p)\in\mathbb{R}^p$. For
$T\in\mathbb{R}$, $Q\in\mathbb{R}^p$ and $t\geq0$, we define $G_Q
(t,T)=\eta^{e^{-t}}(e^{-T}t_1,\ldots,e^{-T}t_p,Q)$. It is the L\' evy
exponent of the random variable
$(\omega_{e^{-t}}(e^{-T}t_1),\ldots,\omega_{e^{-t}}(e^{-T}t_p))$. Now,
we make use of the cascading equation. We claim that for $\varepsilon,
\varepsilon' > 0$, the following equality holds:
\[
\bigl(\omega_{r}^{\varepsilon\varepsilon'}\bigr)_{r\in\mathbb
{R}}\stackrel{
\mathrm{law}} {=} \bigl(\omega_{r}^{\varepsilon} +
\omega_{r/\varepsilon
}^{\varepsilon'}\bigr)_{r\in\mathbb{R}},
\]
where the processes $\omega^{\varepsilon}$ and $\omega^{ \varepsilon
'}$ are independent. This is an easy consequence of the cascading
equation and Lemma \ref{divmeas}. It follows that, for $h>0$,
\[
G_Q(t+h,T)=G_Q(t,T)+G_Q(h,T-t),
\]
or
\[
\frac{G_Q(t+h,T)-G_Q(t,T)}{h}=\frac{G_Q(h,T-t)}{h}.
\]
Because of Assumption~\ref{good}(3), at $t_0=\ln\frac{1}{\epsilon_0}$
and for all $T\in\mathbb{R}$, the left-hand side converges as $h\to0$,
and so does the right-hand side. It follows that $G_Q$ is
differentiable w.r.t. $t$ at $t=0$, and then at every $t\geq0$.
Furthermore, $\partial_tG(t,T)$ is continuous w.r.t. $(t,T)$ because of
Assumption~\ref{good}(3) again. We deduce that
\begin{eqnarray*}
G_Q(t,T)-G_Q(s,T)&=&\int_{s}^{t}H
\bigl(e^{-T+r}t_1,\ldots,e^{-T+r}t_p
\bigr) \,dr
\\
&=&\int_{e^{s-T}}^{e^{t-T}}H(yt_1,\ldots,yt_p)\frac{dy}{y},
\end{eqnarray*}
where
\[
H(t_1,\ldots,t_p,Q)=\lim_{\epsilon\to1}
\frac{1}{-\ln
\epsilon}\eta^\epsilon(t_1,\ldots,t_p,Q).
\]
Furthermore, $H$ is a continuous function of $(t_1,\ldots,t_p)$. By
taking $T,s=0$ and by noticing that $G_Q(s,T)=0$, we deduce
%e48 ###
%e49 #&#
\begin{equation}
\eta^{\epsilon}(t_1,\ldots, t_p,Q)=\int
_{1}^{1/\epsilon} H(yt_1,\ldots,yt_p)
\frac{dy}{y}.
\end{equation}
Now we want to prove that $H$ stands for the finite-dimensional
distributions of an ID process. For that purpose, observe that
(\ref{fatal}) and (\ref{norm}); that is $\psi_\epsilon(1)=0$, implies
that, for each $t\in\mathbb{R}$, the ID random variable with L\'evy
exponent
\[
q\in\mathbb{R}\mapsto\frac{1}{-\ln\epsilon}\psi^\epsilon(q)
\]
is tight. Indeed, both relations imply that its characteristic triple
$(b_\epsilon,\sigma_\epsilon,\nu_\epsilon)$ satisfies
\[
\sup_\epsilon\biggl(\sigma_\epsilon^2+\int
_\mathbb{R}\min\bigl(1,z^2\bigr)
\nu_\epsilon(dz) \biggr)<+\infty
\]
and
\[
b_\epsilon+
\sigma_\epsilon^2/2+\int_\mathbb{R}
\bigl(e^z-1-z\mathbf{1}_{|z|\leq1}\bigr)\nu_\epsilon(dz)=0.
\]
Hence, for any $(t_1,\ldots,t_p)\in\mathbb{R}^p$, the family of ID
random variables with L\'evy exponents
\[
Q\in\mathbb{R}^p\mapsto\frac{1}{-\ln
\epsilon}\eta^\epsilon(t_1,\ldots,t_p,Q)
\]
is tight since its one-dimensional marginals have $\frac{1}{-\ln
\epsilon}\psi^\epsilon(q)$ as L\'evy exponents. Thus, for any
$(t_1,\ldots,t_p)\in\mathbb{R}^p$, $Q\mapsto H(t_1,\ldots,t_p)$ is
necessarily the L\'evy exponent of some nontrivial
$\mathbb{R}^p$-valued ID random variable. Furthermore, $H$ is
necessarily associated with a consistent family of random variables
since the family $ (Q\mapsto\frac {1}{-\ln
\epsilon}\eta^\epsilon(t_1,\ldots,t_p,Q) )_{p\geq
1,(t_1,\ldots,t_p)\in\mathbb{R}^p}$ is. Hence, there exists an ID
stationary random process $X$ on $\mathbb{R}$ such that $\forall
p\geq1$ and $\forall (t_1,\ldots,t_p)\in\mathbb{R}^p$
\[
\mathbb E \bigl[e^{i q_1 X_{t_1}+\cdots+iq_pX_{t_p}}\bigr]=e^{H(t_1,\ldots,t_p,Q) }.
\]
The Laplace transform $\psi$ of $X_0$ (or equivalently of $X_t$ for any
$t\in\mathbb{R}$) necessarily satisfies
\[
\psi(1)=0 \quad\mbox{and}\quad\psi(1+\delta)\leq\delta.
\]

It remains to prove that $X$ is stochastically continuous. Notice that
the mapping $t\in\mathbb{R}\mapsto H(0,t,Q)$ is continuous. In
particular, we
can choose $Q=(q,-q)$ for some $q\in\mathbb{R}$. We deduce $\lim_{t\to
0} \mathbb E [e^{iq(X_t-X_0)}]=1$ for all $q\in\mathbb{R}$. In
particular, $X_t-X_0$
converges in law toward $0$ as $t\to0$. Therefore $X_t-X_0$ converges
in probability toward $0$ as $t\to0$.

%%%%%%%%%%%%%%%%%%%%%%%%%%%%%%%%%%%%%%%%%%%%%%%%%%%%%%%%%%%%%%%%%%%%%%%%%%%%%%%%%%%%%
%s10 ###
%s10 #&#
\section{\texorpdfstring{Proof of Theorem \lowercase{\protect\ref{cnsq}}}{Proof of Theorem 6}}
%%%%%%%%%%%%%%%%%%%%%%%%%%%%%%%%%%%%%%%%%%%%%%%%%%%%%%%%%%%%%%%%%%%%%%%%%%%%%%%%%%%%%

We explain the proof in dimension $d=1$. The generalization to higher
dimensions is straightforward. Let us consider $\delta>0$ such that $M$
admits a moment of order $1+\delta$. For any $\epsilon\in]0,1[$ and
$n\in\mathbb{N}^*$ with finite Lebesgue measure, we have from Jensen's
inequality,
\[
\mathbb E \biggl[M\biggl(\biggl[0,\frac{1}{n}\biggr]\biggr)^{1+\delta}
\biggr]\geq\mathbb E \biggl[\widetilde{M}{}^\epsilon\biggl(\biggl[0,
\frac{1}{n}\biggr]\biggr)^{1+\delta} \biggr].
\]
We deduce
\[
\mathbb E \bigl[M\bigl([0,1]\bigr)^{1+\delta} \bigr]\geq\sum
_{k=1}^n \mathbb E \biggl[M\biggl(\biggl[
\frac{k-1}{n},\frac{k}{n}\biggr]\biggr)^{1+\delta} \biggr] \geq n
\mathbb E \biggl[\widetilde{M}{}^\epsilon\biggl(\biggl[0,\frac{1}{n}
\biggr]\biggr)^{1+\delta} \biggr].
\]
Let us define for $\epsilon\in]0,1[$ and $q\leq1$
\[
F_\epsilon(q,r)=\ln\mathbb E \bigl[e^{qX^\epsilon_0+qX^\epsilon
_r-2q\psi(1)\ln(1/\epsilon)}\bigr]
\]
and observe that
\[
F_\epsilon(q,r)=\int_1^{1/\epsilon}\bigl[
\psi_{0,rg(y)}(q,q)-2q\psi(1)\bigr]\frac{dy}{y}.
\]
Let us consider $h>1$ such that $(1+\delta)h=2$. By using the concavity
of the function $x \mapsto x^{1/h}$ and Jensen's inequality, we get
\begin{eqnarray*}
&& \mathbb E \biggl[ \widetilde{M}{}^\epsilon\biggl(\biggl[0,
\frac{1}{n}\biggr] \biggr)^{1+\delta} \biggr]
\\
&&\qquad  = \mathbb E \biggl[\widetilde{M}{}^\epsilon\biggl(\biggl[0,\frac
{1}{n}\biggr]
\biggr)^{2\times(1/h)} \biggr]
\\
&&\qquad = \mathbb E \biggl[ \biggl( \int_{0}^{1/n} \int
_{0}^{1/n}e^{X^\epsilon_r-\psi(1)\ln(1/\epsilon)+X^\epsilon
_u-\psi(1)\ln(1/\epsilon)} \,dr\,du
\biggr)^{1/h} \biggr]
\\
&&\qquad  \geq n^{-1-\delta} \mathbb E \biggl[ \int_{0}^{1/n}
\int_{0}^{1/n}e^{(1/h)X^\epsilon_r+(1/h)X^\epsilon
_u-(1+\delta)\psi(1)\ln(1/\epsilon)}
n^2\,dr\,du \biggr]
\\
&&\qquad \geq n^{-1-\delta}e^{\inf_{|r|\leq 1/n} F_\epsilon(1/h,r)}.
\end{eqnarray*}
Gathering the above inequalities yields
%e49 ###
%e50 #&#
\begin{equation}
\label{relfundter} \mathbb E \bigl[M\bigl([0,1]\bigr)^{1+\delta} \bigr
]\geq
n^{-\delta}e^{\inf
_{|r|\leq 1/n} F_\epsilon(1/h,r)}.
\end{equation}

Now fix $\alpha>0$. Because the mapping $u\mapsto
\psi_{0,u}(\frac{1}{h},\frac{1}{h})$ is continuous, there exists
$\eta>0$ such that
$|\psi_{0,u}(\frac{1}{h},\frac{1}{h})-\psi_{0,0}(\frac{1}{h},\frac
{1}{h})|\leq
\alpha$ for $|u|\leq\eta$. We choose $\epsilon=\frac{1}{n \eta}$ and
we obtain for $|r|\leq\frac{1}{n}$,
\begin{eqnarray*}
\biggl|F_\epsilon\biggl(\frac{1}{h},r\biggr)-F_\epsilon\biggl(
\frac{1}{h},0\biggr)\biggr|&\leq& \int_1^{n \eta}\biggl|
\psi_{0,rg(y)}\biggl(\frac{1}{h},\frac{1}{h}\biggr)-\psi
_{0,0}\biggl(\frac{1}{h},\frac{1}{h}\biggr)\biggr|
\frac{dy}{y}
\\
&\leq& \alpha\ln(n\eta).
\end{eqnarray*}
We deduce
\[
\inf_{|r|\leq 1/n} F_\epsilon\biggl(\frac{1}{h},r
\biggr)\geq F_\epsilon\biggl(\frac{1}{h},0\biggr)-\alpha\ln(n\eta)=
\ln(n\eta) \bigl(\psi(1+\delta)-(1+\delta)\psi(1)-\alpha\bigr).
\]
By plugging this relation into (\ref{relfundter}), we get
\[
\mathbb E \bigl[M\bigl([0,1]\bigr)^{1+\delta} \bigr]\geq
n^{-\delta}e^{\ln
(n\eta) (\psi(1+\delta)-(1+\delta)\psi(1)-\alpha)}.
\]
Since this relation must be valid for all $n$ large enough, we
necessarily have
\[
\psi(1+\delta)-(1+\delta)\psi(1)-\alpha\leq\delta.
\]
Since $\alpha>0$ is arbitrary, we deduce
\[
\psi(1+\delta)-(1+\delta)\psi(1) \leq\delta.
\]
In particular, by convexity arguments [as in establishing
(\ref{fatal})] we have
\[
\psi'(1)-\psi(1)< 1.
\]
%
%
%For all $\epsilon', \epsilon$, we get the following:
%e^{\omega_{\epsilon' \epsilon}(x)} M_{\epsilon' \epsilon}(dx)= e^{
%By taking $E[.| \mathcal{F}_{\epsilon\epsilon'} ]$, we get:
%(\omega_{\epsilon' \epsilon}(x))_{x \in\R} = (\omega_{ \epsilon}(x)+
%and therefore for all $x$, $t \rightarrow\omega_{e^{-t}}(x)$ is a
%continuous Levy process and thus a Brownian motion with drift $\nu$
%(starting from $0$). In particular, the variable $\omega_{
%(\omega_{\epsilon\epsilon'}(x))_{x \in\R}\stackrel{law}{=} (\omega_{
%where $\omega_{\epsilon}$ and $\tilde{\omega}_{\epsilon'}$ are
%independent. In the sequel, we suppose that the process $(\omega_{
%We consider $\epsilon_n=e^{-\frac{1}{n}}$. Of course $
%(\omega_{e^{-1}}(x))_{x \in\R}\stackrel{law}{=} (\sum_{k=0}^{n-1}
%where the $\omega_{\epsilon_n}^{(k)}$ are independent processes of law
%$\omega_{\epsilon_n}$. Fix $x \in\R$. We therefore have for all $
%Taking covariances, we get the following identity $\E[
%
%Now it is easy to conclude by Fourier for example:
% \E[ e^{ i\lambda\omega_{e^{-1}}(0)+i \mu\omega_{e^{-1}}(x) }]& =
%& \approx\Pi_{k=0}^{n-1} (1-\frac{1}{2} \E[ (\lambda\omega_{
%& = \Pi_{k=0}^{n-1} e^{ \ln(1-\frac{\lambda^2}{2n}-\frac{\mu^2}{2n}-
%& \approx\Pi_{k=0}^{n-1} e^{ -\frac{\lambda^2}{2n}-\frac{\mu^2}{2n}-
%& = e^{-\frac{\lambda^2}{2}-\frac{\mu^2}{2}-\lambda\mu\E[
%
%This shows that the couple $(\omega_{e^{-1}}(0),\omega_{e^{-1}}(x)) $
%is a Gaussian vector. One proceeds similarly to show that, for all $
\end{appendix}

% zodis "Acknowledgments" paliekamas pagal autoriu
\section*{Acknowledgements} The authors would like to thank
Hubert Lacoin for useful discussions.

%suskaldyti doi

% imsref loaded by linak, 2013-12-02 16:45:00

\printaddresses

\end{document}